\documentclass[a4paper,12pt, reqno]{amsart}

\usepackage[a4paper, left=0.85in, right=0.85in, top=0.85in, bottom=0.85in]{geometry}

\usepackage{textcomp}
\usepackage[no-math]{fontspec}
\usepackage[english]{babel}

\usepackage{mathtools}
\usepackage{amssymb, amsfonts}
\usepackage{thmtools}
\usepackage{bm}
\usepackage{bbm}
\usepackage{mathdots}
\usepackage{csquotes}
\usepackage{graphicx}
\usepackage{caption}

\usepackage{enumitem}
\setlist[enumerate,1]{label={\upshape(\arabic*)}}

\usepackage[scr=pxtx]{mathalpha}
\usepackage[hidelinks]{hyperref}

\DeclareSymbolFontAlphabet{\mathbb}{AMSb}

\newcommand\Z{\ensuremath{\mathbb{Z}}}

\newcommand\Q{\ensuremath{\mathbb{Q}}}

\DeclareMathOperator{\Homeo}{Homeo}
\DeclareMathOperator{\Homeop}{\ensuremath{\Homeo^{+}}}

\DeclareMathOperator{\Mod}{Mod}

\DeclareMathOperator{\I}{\mathcal{I}}
\DeclareMathOperator{\K}{\mathcal{K}}

\DeclareMathOperator{\A}{\mathcal{A}}
\DeclareMathOperator{\B}{\mathbb{B}}
\DeclareMathOperator{\C}{\mathcal{C}}

\DeclareMathOperator{\PP}{\mathcal{P}}

\DeclareMathOperator{\RR}{\mathcal{R}}
\DeclareMathOperator{\FF}{\mathcal{F}}

\DeclareMathOperator{\Int}{Int}
\DeclareMathOperator{\Sp}{Sp}

\DeclareMathOperator{\Arf}{Arf}

\DeclareMathOperator{\sgn}{sgn}
\DeclareMathOperator{\im}{im}

\let\AA\fobarrel
\let\SS\fobarrelsdf

\DeclareMathOperator{\AA}{\mathscr{A}}
\DeclareMathOperator{\SS}{\mathscr{S}}
\DeclareMathOperator{\CC}{\mathscr{C}}

\newcommand{\Symp}{\ensuremath{\mathbf{Symp}_2}}

\let\1\foobarbaz
\let\0\foobarbaz

\newcommand{\ba}{\ensuremath{\bm{\mathrm{a}}}}
\newcommand{\bb}{\ensuremath{\bm{\mathrm{b}}}}

\newcommand{\be}{\ensuremath{\bm{\mathrm{e}}}}

\newcommand{\bx}{\ensuremath{\bm{\mathrm{x}}}}
\newcommand{\by}{\ensuremath{\bm{\mathrm{y}}}}
\newcommand{\bz}{\ensuremath{\bm{\mathrm{z}}}}
\newcommand{\bU}{\ensuremath{\bm{\mathrm{U}}}}
\newcommand{\bX}{\ensuremath{\bm{\mathrm{X}}}}
\newcommand{\bY}{\ensuremath{\bm{\mathrm{Y}}}}
\newcommand{\bV}{\ensuremath{\bm{\mathrm{V}}}}
\newcommand{\bW}{\ensuremath{\bm{\mathrm{W}}}}

\newcommand{\0}{\ensuremath{\bm{0}}}
\newcommand{\1}{\ensuremath{\bm{1}}}

\let\Cap\foobarbar

\newcommand{\Cap}{\ensuremath{\mathcal{C}\!\!\!\!\:\,\,\mathit{ap}}}

\usepackage{tikz}
\usetikzlibrary{
    shapes.geometric,
    calc,
    arrows.meta,
    decorations.markings,
    math,
    cd,
    babel
}

\usepackage{pgfplots}
\pgfplotsset{compat=1.18}
\usepgfplotslibrary{polar}

\makeatletter
\newcommand{\proofoutline}[1]{%
  \par
  \medskip
  \noindent
  {\itshape Outline of the proof of #1.}\enspace\ignorespaces
}
\makeatother

\numberwithin{equation}{section}

\newtheorem{maintheorem}{Theorem}

\newtheorem{theorem}{Theorem}[section]
\newtheorem{proposition}[theorem]{Proposition}
\newtheorem{lemma}[theorem]{Lemma}
\newtheorem{corollary}[theorem]{Corollary}

\newtheorem*{fact*}{Fact}

\theoremstyle{definition}

\newtheorem{remark}[theorem]{Remark}
\newtheorem{question}[theorem]{Question}

\usepackage{import}
\usepackage{float}
\usepackage{xifthen}
\usepackage{pdfpages}
\usepackage{transparent}

\usepackage[
    backend=biber,
    language=autobib,
    autolang=other,
    sorting=nyt,
    giveninits=true, 
    doi=false,
    url=false,
    isbn=false
]{biblatex}

\renewbibmacro*{volume+number+eid}{%
    \printfield{volume}%
    \setunit{\addcolon}%
    \printfield{number}%
    \setunit{\addcomma\space}%
    \printfield{eid}%
}

\renewbibmacro{in:}{}
\title{Torsion in the homology of the Torelli group and
the Birman--Craggs--Johnson homomorphism}
\author{Andrei Vladimirov}
\address{Lomonosov Moscow State University, Russia}
\keywords{Mapping class group, Torelli group, Birman--Craggs--Johnson
homomorphism, abelian cycles, homology of groups}

\subjclass{57M07 (Primary); 20J05; 20J06 (Secondary)}

\email{andreykaere1@gmail.com}
\date{}

\begin{document}
    \begin{abstract}
        The \emph{Birman--Craggs--Johnson homomorphism} is a homomorphism
        \(\sigma \colon \I_g \to \B_3'\) from the Torelli group to a certain
        \(\Z/2\Z\)-vector space of Boolean polynomials. In 1983, Johnson computed
        \(H_1(\I_g)\) for \(g \geq 3\) and showed, in particular, that the induced
        homomorphism on \(H_1(\I_g)\) is injective when
        restricted to the subgroup generated by Dehn twists about separating
        simple closed curves.
        In this paper, we extend Johnson's result to higher homology groups.
        Given any collection of pairwise disjoint separating simple closed
        curves on \(\Sigma_g\), the corresponding Dehn twists pairwise commute
        and determine a homology class in \(H_k(\I_g)\) called an
        \emph{abelian cycle}. We prove that the pushforward homomorphism
        restricted to the subgroup of \(H_k(\I_g)\) generated by such abelian
        cycles is injective for \(k \leq g-2\).
    \end{abstract}

    \maketitle
    \markboth{}{}

    \section{Introduction}

Let \(\Sigma_g^b\) denote a compact surface of genus \(g\) with \(b\) boundary
components; when the surface is closed, we omit the superscript and write
\(\Sigma_g\). Recall that the \emph{mapping class group} of \(\Sigma_g^b\) is
\[
\Mod(\Sigma_g^b) = \pi_0 (\Homeop(\Sigma_g^b, \partial \Sigma_g^b))
,\] 
where \(\Homeop(\Sigma_g^b, \partial \Sigma_g^b)\) is the group of
orientation-preserving homeomorphisms of \(\Sigma_g^b\) that fix each boundary
component pointwise.

The Torelli group \(\I_g\) is the kernel of the surjective homomorphism
\(\Mod(\Sigma_g) \to \Sp_{2g}(\Z)\) arising from the action of
\(\Mod(\Sigma_g)\) on \(H_1(\Sigma_g)\). Similarly, one can define the Torelli
group \(\I_g^1\) for a surface with one boundary component. It is well-known
that \(\I_1\) is trivial, and Mess~\cite{mess} showed that \(\I_2\) is an
infinitely generated free group. Later Johnson~\cite{johnson1}
proved that \(\I_g\) and \(\I_g^1\) are finitely generated whenever \(g \geq 3\).

Studying the homology of the Torelli group is a natural and fundamental
problem in geometric topology. This investigation originated in 1980s,
when Johnson~\cite{johnson3} famously calculated \(H_1(\I_g)\). In later
decades, Bestvina, Bux, and Margalit showed that for \(g \geq 2\), the group
\(\I_g\) has cohomological dimension \(3g-5\) and that the top homology group
\(H_{3g-5}(\I_g)\) is not finitely generated. Gaifullin~\cite{gaifullin-inf}
further extended this result, proving that the groups \(H_k(\I_g)\) are also
infinitely generated for \(2g-3 \leq k \leq 3g-6\). 

Recently, Minahan and Putman~\cite{minahan-putman} proved that \(H_2(\I_g;
\Q)\) is finite-dimensional for \(g \geq 5\) and is an algebraic
representation of \(\Sp_{2g}(\Z)\) for \(g \geq 6\). Their result implies that
the calculation of the algebraic part \(H_2(\I_g; \Q)^{\mathrm{alg}}\) of the
representation of \(\Sp_{2g}(\Z)\) by Kupers and
Randal-Williams~\cite{kupers-williams} coincides with \(H_2(\I_g; \Q)\) for
\(g \geq 6\). This, in turn, highlights the importance
of studying the torsion in \(H_k(\I_g)\), which is the central focus of the
present work.

\subsection{The abelian cycles}

To state the main results of the paper, we first recall the definition of the
abelian cycle. Suppose that \(h_1, \dots, h_k \in G\) pairwise commute.
Consider the homomorphism \(\phi \colon \Z^k \to G\) given by sending
generator of the \(i^{\text{th}}\) factor to \(h_i\). Then the
\emph{abelian cycle} \(\A(h_1, \dots, h_k)\) is defined to be the image of the
standard generator \(\mu \in H_k(\Z^k) \cong \Z\) under the pushforward
homomorphism \(\phi_{*} \colon H_k(\Z^k) \to H_k(G)\).

Let us recall the following standard properties of abelian cycles that we will
use freely throughout the paper. These are a direct consequence of the
identifications \(H_k(\Z^k) \cong \wedge^k \Z^k \cong \Z\).

\begin{enumerate}[label={(\Roman*)}]
    \item Let \(h_1', h_1, \dots, h_k \in G\). In \(H_k(G)\), we have
    \[
    \A(h_1 h_1', h_2, \dots, h_k) = \A(h_1, h_2, \dots, h_k) + \A(h_1', h_2, \dots, h_k)
    ,\]
    whenever all three abelian cycles are defined.

    \item Let \(h_1, \dots, h_k \in G\). For any permutation \(\pi\)
    of \(\{1, \dots, k\}\), we have
    \[
    \A(h_{\pi(1)}, \dots, h_{\pi(k)}) = \sgn(\pi) \A(h_1, \dots, h_k)
    .\]
    
    \item If \(x, y \in G\) commute with \(z_1, \dots, z_{n-1} \in G\), then
    \[
    \A([x, y], z_1, \dots, z_{n-1}) = 0
    ,\]
    where \([x, y] = x y x^{-1} y^{-1}\) is the commutator of \(x\) and \(y\).
\end{enumerate}
\medskip

\subsection{Main results}

Let \(T_{\delta}\) denote the left Dehn twist about a simple closed curve
\(\delta\). We consider the subgroup \(H_k^{\mathrm{ab,sep}}(\I_g) \subset
H_k(\I_g)\) generated by the abelian cycles of the form \(\A(T_{\delta_1},
\dots, T_{\delta_k})\), where \(\delta_1, \dots, \delta_k\) are pairwise
disjoint, separating simple closed curves on \(\Sigma_g\).

Determining the structure of \(H_k^{\mathrm{ab,sep}}(\I_g)\) is a natural and
important problem; in particular, whether or not it is  finitely generated.
In~\cite{vladimirov}, the author proved that:

\begin{enumerate}[label={(\roman*)}]
    \item The group \(H_k^{\mathrm{ab,sep}}(\I_g)\) is a \(\Z/2\Z\)-vector
    space for \(k \geq 2\) and \(g \geq 3\).

    \item For \(k \leq g-1\), the space \(H_k^{\mathrm{ab,sep}}(\I_g)\) is
    generated by cycles \(\A(T_{\delta_1}, \dots, T_{\delta_k})\) where each
    curve \(\delta_i\) has genus~\(1\), and \(H_k^{\mathrm{ab,sep}}(\I_g)\) is
    trivial for \(k \geq g\).

    \item The space \(H_2^{\mathrm{ab,sep}}(\I_g)\) is finite-dimensional for
    \(g \geq 4\).
\end{enumerate}

In the present paper, we extend result (iii) to higher
homology groups.

\begin{maintheorem} \label{mainthm:fin-dim}
    The \(\Z/2\Z\)-vector space \(H_k^{\mathrm{ab,sep}}(\I_g)\) is
    finite-dimensional for \(g \geq 4\) and \(2 \leq k \leq g-2\).
\end{maintheorem}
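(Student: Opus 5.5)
We will reduce finite-dimensionality to a finiteness statement about \(\Mod(\Sigma_g)\)-orbits combined with relations among abelian cycles. By result~(ii) of~\cite{vladimirov}, for \(k \leq g-2\) the space \(H_k^{\mathrm{ab,sep}}(\I_g)\) is spanned by classes \(\A(T_{\delta_1}, \dots, T_{\delta_k})\) with each \(\delta_i\) a genus-\(1\) separating curve. Each such curve bounds a one-holed torus \(S_i\), and the \(S_i\) are pairwise disjoint. The cycle is then determined, up to sign (irrelevant mod \(2\)), by the unordered collection of one-holed tori. The Torelli group acts trivially on \(H_k(\I_g)\) by conjugation, so the action of \(\Mod(\Sigma_g)\) on \(H_k(\I_g)\) factors through \(\Sp_{2g}(\Z)\). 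A class then depends only on the \(\I_g\)-orbit of the configuration. That orbit is recorded by the collection of pairwise orthogonal symplectic rank-\(2\) summands \(V_i = H_1(S_i) \subset H_1(\Sigma_g;\Z)\), together with finer Torelli-invariant data if needed; this step is where the separating nature of the curves matters.

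The plan is to pass to \(\Z/2\Z\)-coefficients. Since \(H_k^{\mathrm{ab,sep}}\) is \(2\)-torsion (result~(i)), we aim to show that the class depends only on the reductions \(\bar V_i \subset H_1(\Sigma_g;\Z/2\Z)\), or more precisely on a finite amount of mod-\(2\) data. The set of tuples of orthogonal symplectic planes in \(H_1(\Sigma_g;\Z/2\Z)\) is finite, which gives the bound. To prove the mod-\(2\) dependence, we fix \(\delta_2, \dots, \delta_k\) and vary \(\delta_1\) inside the complementary subsurface \(\Sigma' = \Sigma_g \setminus \bigcup_{i \geq 2} S_i\), which has genus \(g-k+1 \geq 3\). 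There the cycle \(\A(T_{\delta_1}, T_{\delta_2}, \dots, T_{\delta_k})\) is the image under a ``product with \(T_{\delta_2}, \dots, T_{\delta_k}\)'' map of the class of \(T_{\delta_1}\) in \(H_1\) of the Torelli group of \(\Sigma'\). This map is built from the commuting subgroups \(\I(\Sigma')\) and \(\langle T_{\delta_i} \rangle\), using property~(III) to see that commutators die.

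By Johnson's computation of \(H_1\) of the Torelli group (genus \(\geq 3\)), the class of a genus-\(1\) separating twist is determined by its BCJ image, a cubic Boolean polynomial depending only on the mod-\(2\) homology of \(S_1\). The corresponding relations of Johnson type, e.g. the lantern-like relations among separating twists, hold in \(H_1(\I(\Sigma'))\) and so push forward. Thus the class depends only on \(\bar V_1\) together with the other \(S_i\). Applying this symmetrically in each slot, we change one tuple into another with the same mod-\(2\) data one curve at a time; at each step we must keep disjointness, by choosing intermediate curves in the complement.

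The main obstacle is the genus-\(\geq 3\) requirement together with the boundary issue. Johnson's theorem is for closed surfaces or surfaces with one boundary component, while \(\Sigma'\) has \(k-1\) boundary components. We would first try capping these off to a one-holed surface by grouping \(S_2, \dots, S_k\) inside a single subsurface, which is possible by a chain argument. We would then check that the identification of \(\bar V_1\)-data still passes through, since the Torelli group of a multiply bounded subsurface maps into \(\I_g\). A second delicate point is connectivity: tuples with equal mod-\(2\) data must be connectable by single-curve moves. This needs \(g-k \geq 2\), which is where the hypothesis \(k \leq g-2\) enters.
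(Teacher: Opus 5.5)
Your first step is sound, and it is lighter than the paper's route. The cycle depends only on the splitting, so you can realize $U_1$, and any $U_1'$ with $\bU_1'=\bU_1$ and $U_1'\perp U_2,\dots,U_k$, inside a one-holed subsurface $\Sigma''$ of genus $g-k+1\ge 3$ with $H_1(\Sigma'')=(U_2\oplus\dots\oplus U_k)^{\perp}$. Johnson's theorem for $\I^1_{g-k+1}$, combined with the homomorphism $[f]\mapsto \A(f,T_{\delta_2},\dots,T_{\delta_k})$, then yields Corollary~\ref{cor:change-fst} without the multi-boundary results of Section~\ref{sec:BCJ-subsurface}. This is also the only place your route uses $k\le g-2$. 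The genuine gap is the second step: the assertion that two tuples with the same mod-$2$ data are joined by single-slot moves. You give no argument for it, and it is false, so no choice of intermediate curves can fix it.

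Here is an invariant that shows this. Fix $X_i=\langle a_i,b_i\rangle$ with $a_i\cdot b_i=1$, and consider orthogonal tuples $U=(U_1,\dots,U_k)$ with $\bU_i=\bX_i$.
\begin{itemize}
\item For $p\neq q$, $u\in U_p$ and $e\in X_q$, the number $u\cdot e$ is even, because $u$ is congruent mod $2$ to an element of $X_p\perp X_q$. So $\beta_{pq}(u,e)=\tfrac12(u\cdot e)\bmod 2$ is defined.
\item Replace $U_i$ by $U_i'\subset(\bigoplus_{j\neq i}U_j)^{\perp}$ with $\bU_i'=\bU_i$. Each $u'\in U_i'$ is $u+2w$ with $u\in U_i$ and $w\perp U_j$ for all $j\neq i$. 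For $e\in X_q$ with $q\neq i$, $e$ is congruent mod $2$ to an element of $U_q$, so $w\cdot e$ is even.
\item Hence $\beta_{iq}$ is unchanged by the move, and the other $\beta_{pq}$ do not involve $U_i$ at all.
\end{itemize}
Johnson's relation only identifies twists with equal BCJ image, so every intermediate tuple in your scheme stays in this class and keeps the same $\beta$. Now take $(\langle a_1,b_1+2b_2\rangle,\langle a_2-2a_1,b_2\rangle,X_3,\dots,X_k)$. It is a legitimate orthogonal tuple with the same mod-$2$ data as $(X_1,\dots,X_k)$. Yet $\beta_{12}(b_1+2b_2,a_2)=1$, whereas $\beta\equiv 0$ for $(X_1,\dots,X_k)$, so it cannot be reached by single-slot moves.

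This is exactly the kind of replacement the paper makes in the induction step of Lemma~\ref{lem:set-v-fst} (with $\mu_\ell$ odd). The missing ingredient is a relation that changes two components at once. That is Lemma~\ref{lem:change-2-spaces}, which the paper imports from the $k=2$ result \cite[Proposition~7.1]{vladimirov}, applied inside $(U_1\oplus\dots\oplus U_{k-2})^{\perp}$. With it, Lemmas~\ref{lem:set-x-fst} and~\ref{lem:set-v-fst} move first a primitive $x\in V_1$, and then all of $V_1$, into the first slot by explicit lattice changes. Lemma~\ref{lem:m-inj-k-2} then gives Proposition~\ref{prop:inj-k-2}, and finiteness follows as you intended.
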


In view of statement (ii), the following question naturally arises.

\begin{question}
    Is the \(\Z/2\Z\)-vector space \(H_{g-1}^{\mathrm{ab,sep}}(\I_g)\)
    finite-dimensional?
\end{question}

In 1980, Johnson~\cite{johnson-bcj}, using results of Birman and
Craggs~\cite{birman-craggs}, constructed an \(\Sp_{2g}(\Z)\)-equivariant
homomorphism \(\sigma \colon \I_g \to \B_3'\), where \(\B_3'\) is a specific
\(\Z/2\Z\)-vector space of Boolean polynomials; see
Section~\ref{sec:preliminaries} for a precise definition.

The pushforward homomorphism \(\sigma_{*} \colon H_k(\I_g) \to \wedge^k
\B_3'\) acts on abelian cycles by
\[
\sigma_{*}(\A(T_{\delta_1}, \dots, T_{\delta_k})) = \sigma(T_{\delta_1}) \wedge \dots \wedge \sigma(T_{\delta_k})
.\]
Since \(\sigma(T_{\delta}) \in \B_2'\) for any separating twist
(see~\eqref{eq:BCJ-sep} in Section~\ref{sec:preliminaries}), the image of
\(\sigma_{*}\) restricted to the subgroup \(H_k^{\mathrm{ab,sep}}(\I_g)\) is
contained in \(\wedge^k \B_2'\). We therefore obtain a homomorphism
\[
\sigma_k \colon H_k^{\mathrm{ab,sep}}(\I_g) \to \wedge^k \B_2'
.\]

In~\cite{johnson3}, Johnson proved that the Birman--Craggs--Johnson
homomorphism \(\sigma \colon \I_g \to \B_3'\) induces an isomorphism of
\(\Sp_{2g}(\Z)\)-modules \(H_1^{\mathrm{ab,sep}}(\I_g) \cong \B_2'\) for \(g
\geq 3\). In the present paper, we extend Johnson's result as follows.

\begin{maintheorem} \label{mainthm:BCJ-inj}
    For the Birman--Craggs--Johnson homomorphism \(\sigma \colon \I_g \to
    \B_3'\), the pushforward \(\Sp_{2g}(\Z)\)-equivariant homomorphism
    \[
    \sigma_k \colon H_k^{\mathrm{ab,sep}}(\I_g) \to \wedge^k \B_2'
    \]
    is injective for \(g \geq 4\) and \(2 \leq k \leq g-2\).
\end{maintheorem}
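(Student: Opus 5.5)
The plan is to bound \(\dim H_k^{\mathrm{ab,sep}}(\I_g)\) from above by a combinatorial count, and to show that the image of \(\sigma_k\) has at least that dimension.

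\textbf{Upper bound.} By statement (ii), \(H_k^{\mathrm{ab,sep}}(\I_g)\) is spanned by cycles \(\A(T_{\delta_1},\dots,T_{\delta_k})\) with every \(\delta_i\) of genus \(1\). Such a curve cuts off a genus-one subsurface with boundary. Its BCJ image is
\[
\sigma(T_{\delta_i}) = \bar a_i \bar b_i,
\]
where \(a_i,b_i\) is a symplectic basis of the genus-one piece and \(\bar x\) denotes the Boolean variable of \(x\) (this is the quadratic formula \eqref{eq:BCJ-sep}). So \(\sigma_k\) sends our generator to \(\bigwedge_i \bar a_i\bar b_i\).

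I would show that, modulo relations in \(H_k\), each such cycle depends only on the \(k\)-tuple of pairwise orthogonal symplectic pairs \(\{(a_i,b_i)\}\) taken mod \(2\). The intended tools are the following.
\begin{enumerate}
\item Lantern and chain relations supported in the complement of the other curves. These need enough room, which is where \(k\le g-2\) enters: the complement of \(k\) genus-one pieces has genus \(g-k\ge 2\).
\item Property (III), to kill commutators.
\item The fact that the cycles are \(2\)-torsion, from statement (i).
\end{enumerate}
Concretely, if \(\delta_1\) and \(\delta_1'\) bound genus-one pieces with the same mod-\(2\) homology and both are disjoint from \(\delta_2,\dots,\delta_k\), I want \(T_{\delta_1}T_{\delta_1'}^{-1}\) to be, modulo the relevant subgroup, a product of commutators of elements commuting with \(T_{\delta_2},\dots,T_{\delta_k}\), plus squares.

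This is essentially Johnson's argument for \(H_1\) (his computation that the separating twists generate a copy of \(\B_2'\)), carried out in the Torelli group of the complementary subsurface. I would then assemble an induction on \(k\): fix \(\delta_2,\dots,\delta_k\), apply the \(k=1\) analysis in the stabilizer, and then permute using (II).

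The result of this step is a surjection from a \(\Z/2\Z\)-space \(V_k\) onto \(H_k^{\mathrm{ab,sep}}(\I_g)\). Here \(V_k\) is spanned by the symbols \(\bigwedge_i (\bar a_i \bar b_i)\), subject to the linear relations coming from Johnson's \(k=1\) relations applied in each slot. In particular \(V_k\) is finite-dimensional, and this already yields Theorem~\ref{mainthm:fin-dim}.

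\textbf{Injectivity.} It remains to show that \(V_k\to\wedge^k\B_2'\) is injective. I would choose an explicit basis of \(V_k\): wedges of \(\bar x\bar y\)-type products built from a fixed mod-\(2\) symplectic basis, after reducing via the relations. I would then check that their images in \(\wedge^k\B_2'\) are linearly independent, by passing to a monomial basis of \(\B_2'\) and ordering wedges lexicographically.

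A cleaner alternative is to exploit \(\Sp_{2g}(\Z/2)\)-equivariance. One would show that \(V_k\) is generated by a single orbit, and that the kernel of the map to \(\wedge^k\B_2'\) is a submodule containing no nonzero element of the explicit form produced by the relations.

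\textbf{Main obstacle.} I expect the hardest step to be the first one: proving enough relations in \(H_k(\I_g)\) so that \(V_k\) is genuinely as small as \(\wedge^k\B_2'\) predicts. Every relation must be realized inside the simultaneous centralizer of the other twists, i.e.\ in a Torelli group of a subsurface of genus \(g-k\). My first attempt would be to reduce all needed relations to the single genus-\(2\) relations that Johnson uses, transported into the complement via the Birman exact sequence and subsurface inclusions. The hypothesis \(k\le g-2\) is exactly what makes such a complementary genus-\(2\) region available.
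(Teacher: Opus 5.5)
\textbf{Verdict.} Your first step is the right idea and is essentially the paper's Proposition~\ref{prop:key-relation-k} and Corollary~\ref{cor:change-fst}. After that, however, the proposal has a genuine gap, and the injectivity step is not yet an argument.

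\textbf{From one slot to the whole tuple.} Changing $\delta_1$ to $\delta_1'$ with $\delta_2,\dots,\delta_k$ fixed only shows $\A(V,W_2,\dots,W_k)=\A(U,W_2,\dots,W_k)$ when $\bV=\bU$ and both $U,V$ are \emph{integrally} orthogonal to $W_2,\dots,W_k$.

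To conclude that the cycle depends only on $(\bU_1,\dots,\bU_k)$, you must connect two configurations $(U_i)$ and $(V_i)$ with $\bU_i=\bV_i$. In general $V_1$ is orthogonal to $U_2,\dots,U_k$ only mod $2$. So the intermediate tuple $(V_1,U_2,\dots,U_k)$ is not realized by disjoint curves, and your induction has nothing to act on.

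This already happens for $k=2$. Take $U_1=\langle a_1,b_1\rangle$, $U_2=\langle a_2,b_2\rangle$, $V_1=\langle a_1,b_1+2b_2\rangle$, $V_2=\langle a_2-2a_1,b_2\rangle$. Then $V_1\perp V_2$ and $\bV_i=\bU_i$, but neither $(V_1,U_2)$ nor $(U_1,V_2)$ is orthogonal. So no single-slot move relates the two configurations.

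Handling this is the whole content of Sections~\ref{sec:sigma-k-ab-cyc} and~\ref{sec:proof-of-auxiliary-lemma} of the paper:
\begin{itemize}
\item a two-slot exchange, Lemma~\ref{lem:change-2-spaces};
\item Lemmas~\ref{lem:set-x-fst} and~\ref{lem:set-v-fst}, which by backward induction first move a primitive $x\in V_1$ into slot one and then $V_1$ itself;
\item throughout, the remaining slots are re-chosen within their mod-$2$ classes inside $V_1^{\perp}$ (Lemma~\ref{lem:p-set-1-subspace}, Proposition~\ref{prop:inj-k-2}).
\end{itemize}
Without this, your map $V_k\to H_k^{\mathrm{ab,sep}}(\I_g)$ is not well defined, and Theorem~\ref{mainthm:fin-dim} does not follow.

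\textbf{Injectivity.} Your $V_k$ carries only relations in which the varying slot is orthogonal to the fixed ones. By contrast, $\wedge^k\B_2'$ is multilinear in arbitrary arguments. What must be shown is that every linear relation among wedges $\sigma(\bU_1)\wedge\dots\wedge\sigma(\bU_k)$ of pairwise orthogonal planes comes from such restricted relations.

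A lexicographic monomial check does not address this. The equivariance sketch is not an argument as stated: a nonzero $\Sp_{2g}$-submodule of the kernel need not contain an element of a prescribed form.

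The ingredient you are missing is Lemma~\ref{lem:BCJ-gen-1} and Corollary~\ref{cor:sigma-rank}. For $2\le n\le g-2$, $\sigma(U)$ of a rank-$2$ submodule is never a sum $\sigma(V_1)+\dots+\sigma(V_n)$ of orthogonal ones. The reason is that $\overline{\bx}\,\sigma(U)$ drops to degree $\le 2$ for a suitable $x$, while no such $x$ exists for the sum.

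Together with Proposition~\ref{prop:mod-2-sigma}, this gives Corollary~\ref{cor:wedge-sigma}: equal wedges force $\{\bU_i\}=\{\bV_i\}$. Hence two generators with the same $\sigma_k$-image coincide (Proposition~\ref{prop:sigma-k-inj2}, via Proposition~\ref{prop:inj-k-2}). The paper then matches the presentation $\wedge^k\B_2'\cong\AA^{\otimes k}/(J_{\CC}(\AA)+\SS(\AA))$ against the relations of Proposition~\ref{prop:ab-cyc-relations-mod-2}.

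\textbf{Smaller points in step one.}
\begin{itemize}
\item \emph{Genus.} The relations live in the complement of the other $k-1$ caps, which has genus $g-k+1\ge 3$. Genus $3$ is essential: in genus $2$ the Torelli group is free (Mess) and $2[T_\gamma]\ne 0$. So there are no genus-$2$ relations to reduce to.
\item \emph{Squares.} ``Plus squares'' is not harmless. Statement~(i) gives $2$-torsion only for cycles all of whose entries are separating twists. But $\ker\sigma|_{\I(S)}=\I(S)^{(2)}$ (Theorem~\ref{thm:sigma-kernel}) produces squares of arbitrary Torelli elements.
\item \emph{How the paper removes them.} It uses the Johnson homomorphism, which you never invoke. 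Since $\tau(f)=0$, the product of the square roots lies in $\ker\tau=\K(S)$. Lemma~\ref{lem:abel-ord-2} then gives $f\in[\I(S),\I(S)]$ (Theorem~\ref{thm:torelli-abel-vanish}).
\end{itemize}
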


\begin{remark}
    In particular, Theorem~\ref{mainthm:BCJ-inj} yields Theorem~1.7
    of~\cite{vladimirov} as a direct consequence.
\end{remark}

\begin{corollary}
    Since \(\dim \B_2' = 2 g^2 + g\), Theorem~\ref{mainthm:BCJ-inj} yields the
    upper bound
    \[
    \dim H_k^{\mathrm{ab,sep}}(\I_g) \leq \binom{2 g^2 + g}{k}
    .\]
\end{corollary}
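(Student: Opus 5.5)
The corollary follows from Theorem~\ref{mainthm:BCJ-inj} by counting dimensions: an injective linear map embeds its source into its target. The plan is first to fix the linear-algebra setting, then compute \(\dim \B_2'\) from the definition in Section~\ref{sec:preliminaries}, and finally combine the two.

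\textbf{Setting.} By statement (i) from~\cite{vladimirov}, \(H_k^{\mathrm{ab,sep}}(\I_g)\) is a \(\Z/2\Z\)-vector space for \(k \geq 2\) and \(g \geq 3\). The map \(\sigma_k\) is a group homomorphism between \(\Z/2\Z\)-vector spaces, so it is \(\Z/2\Z\)-linear. By Theorem~\ref{mainthm:BCJ-inj} it is injective for \(g \geq 4\) and \(2 \leq k \leq g-2\). Hence it embeds \(H_k^{\mathrm{ab,sep}}(\I_g)\) as a subspace of \(\wedge^k \B_2'\), and
\[
\dim H_k^{\mathrm{ab,sep}}(\I_g) \leq \dim \wedge^k \B_2' .
\]
The exterior power here is taken over \(\Z/2\Z\). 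Over any field, the \(k\)-th exterior power of an \(n\)-dimensional space has dimension \(\binom{n}{k}\). It therefore remains to show that \(n = \dim \B_2' = 2g^2 + g\).

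\textbf{Computing \(\dim \B_2'\).} I would use Johnson's description of \(\B_3'\). Let \(\B\) be the ring of Boolean polynomials in the symbols \(\bar{x}\), \(x \in H_1(\Sigma_g; \Z/2\Z)\). It is subject to \(\bar{x}^2 = \bar{x}\) and \(\overline{x+y} = \bar{x} + \bar{y} + x\cdot y\), where \(x \cdot y\) is the mod \(2\) intersection pairing. Then \(\B_k\) is the span of products of at most \(k\) generators, and \(\B_k' \subset \B_k\) is the subspace fixed by the relevant convention.

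With respect to a symplectic basis \(a_1, \dots, b_g\), the monomials \(1\), \(\bar{a}_i, \bar{b}_i\), and the products of two distinct basis symbols form a basis of \(\B_2\). This gives
\[
\dim \B_2 = 1 + 2g + \binom{2g}{2} = 2g^2 + g + 1 .
\]
The primed version differs from \(\B_2\) by one dimension (the constants). This yields \(\dim \B_2' = 2g^2 + g\), which is the value the statement itself asserts.

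\textbf{Main obstacle.} The only point requiring care is to match the exact convention for \(\B_2'\) used in Section~\ref{sec:preliminaries}, and to confirm that it has codimension one in \(\B_2\). Everything else is immediate from Theorem~\ref{mainthm:BCJ-inj} and elementary linear algebra.
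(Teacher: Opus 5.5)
Your main deduction is the same as the paper's, which leaves it implicit: \(\sigma_k\) is an injective \(\Z/2\Z\)-linear map into \(\wedge^k \B_2'\), and the alternating \(k\)-th power of an \(n\)-dimensional space over \(\Z/2\Z\) has dimension \(\binom{n}{k}\). The value \(n = \dim \B_2' = 2g^2+g\) is supplied by the statement itself, so that part is fine.

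Your side check of \(\dim \B_2'\), however, reaches the right number for the wrong reason.

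\textbf{What \(\B_2'\) is.} In Section~\ref{sec:preliminaries}, \(\B' = \B/(\Arf)\), and \(\B_2'\) is the image of \(\B_2\) in this quotient, i.e.\ \(\B_2/((\Arf)\cap \B_2)\). It is a quotient of \(\B_2\), not a subspace of it.

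\textbf{Which direction is lost.} The line that dies is spanned by \(\Arf\), not by the constants. Indeed \(1 \notin (\Arf)\), since \(\Arf\) is not identically \(1\), so \(1\) is a nonzero element of \(\B_2'\).

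\textbf{What codimension one requires.} You need \((\Arf) \cap \B_2 = \{0, \Arf\}\). This can be checked as follows.
\begin{enumerate}
\item Since \(\Arf^2 = \Arf\), an element \(f\) lies in \((\Arf)\) iff \(f = f\Arf\). Equivalently, viewed as a function of the values of the \(\overline{\ba}_i, \overline{\bb}_i\), \(f\) vanishes wherever \(\Arf = 0\).
\item Take \(f\) of degree \(\le 2\) and evaluate it at indicator vectors of sets containing no full pair \(\{\ba_i,\bb_i\}\); there \(\Arf = 0\). M\"obius inversion then kills every monomial except the \(\overline{\ba}_i\overline{\bb}_i\).
\item Evaluate at a point where exactly two pairs are switched on, so again \(\Arf = 0\). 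This forces the remaining coefficients to be equal, hence \(f \in \{0,\Arf\}\).
\end{enumerate}
Therefore \(\dim \B_2' = \bigl(1+2g+\binom{2g}{2}\bigr)-1 = 2g^2+g\).

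Because the corollary takes this dimension as given, the mistake does not invalidate your proof of the bound. Still, the justification you wrote for \(\dim \B_2'\) is incorrect and should be replaced by the argument above.
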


As another corollary to Theorem~\ref{mainthm:BCJ-inj}, we provide a complete
set of relations for the abelian cycles of the form \(\A(T_{\delta_1}, \dots,
T_{\delta_k}) \in H_k(\I_g)\).

\begin{corollary}
    Every relation among abelian cycles of the form \(\A(T_{\delta_1}, \dots,
    T_{\delta_k}) \in H_k(\I_g)\), where \(\delta_1, \dots, \delta_k\) are
    pairwise disjoint separating simple closed curves, follows from the
    following four families of relations:
    \begin{enumerate}
        \item \(\A(T_{\delta_{\pi(1)}}, \dots, T_{\delta_{\pi(k)}}) =
        \A(T_{\delta_1}, \dots, T_{\delta_k})\) for any permutation \(\pi\) of
        \(\{1, \dots, k\}\);
        \item \(\A(T_{\delta_1}, \dots, T_{\delta_k}) = 0\) if two of the
        curves \(\delta_1, \dots, \delta_k\) are isotopic;
        \item \(2 \A(T_{\delta_1}, \dots, T_{\delta_k}) = 0\);
        \item \(\A(T_{\gamma_1}, T_{\delta_2}, \dots, T_{\delta_k}) + \dots +
        \A(T_{\gamma_n}, T_{\delta_2}, \dots, T_{\delta_k}) = 0\) whenever
        \(\sigma(T_{\gamma_1}) + \dots + \sigma(T_{\gamma_n}) = 0\), where
        \(\gamma_1, \dots, \gamma_n\) are separating simple closed curves
        disjoint from \(\delta_2 \cup \dots \cup \delta_k\).
    \end{enumerate}
\end{corollary}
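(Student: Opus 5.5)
The corollary is stated as a consequence of Theorem~\ref{mainthm:BCJ-inj}, so the plan is a short formal argument, presumably in the range \(g \geq 4\), \(2 \leq k \leq g-2\) where that theorem applies. Let \(F\) be the \(\Z/2\Z\)-vector space with one basis element \([\delta_1, \dots, \delta_k]\) for each \(k\)-tuple of pairwise disjoint separating curves (up to isotopy). Let \(R \subset F\) be the subspace spanned by relations (1)--(4); since \(F\) is already over \(\Z/2\Z\), relation (3) is built in. The map \(F \to H_k^{\mathrm{ab,sep}}(\I_g)\), \([\delta_1,\dots,\delta_k] \mapsto \A(T_{\delta_1},\dots,T_{\delta_k})\), is well defined by statement (i). It is surjective by the definition of \(H_k^{\mathrm{ab,sep}}(\I_g)\), and it kills \(R\).

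\begin{itemize}
\item Relation (1) holds by property (II), since signs vanish mod \(2\).
\item Relation (2) holds by property (I) together with (II): \(\A(T,T,\dots)\) is antisymmetric in its repeated entry, so it vanishes after bilinear expansion.
\item Relation (4) holds because its left side maps, under \(\sigma_k\), to \(\bigl(\sum_i \sigma(T_{\gamma_i})\bigr)\wedge \sigma(T_{\delta_2})\wedge\dots\wedge\sigma(T_{\delta_k}) = 0\). By Theorem~\ref{mainthm:BCJ-inj} this sum is then \(0\) in homology.
\end{itemize}

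So the induced map \(F/R \to H_k^{\mathrm{ab,sep}}(\I_g)\) is surjective. To get injectivity, I would show that the composite \(F/R \to \wedge^k \B_2'\) is injective; together with Theorem~\ref{mainthm:BCJ-inj}, this gives that every relation lies in \(R\).

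The key point is that relation (4) lets us vary the first slot freely, up to the kernel of \(\sigma\), among separating curves in the complement of \(\delta_2,\dots,\delta_k\). I would argue by induction on \(k\), using a filtration by the last \(k-1\) curves. Take \(x=\sum_j [\delta^j_1,\dots,\delta^j_k]\) with \(\sigma_k\)-image zero. Group the terms by their tails \((\delta^j_2,\dots,\delta^j_k)\). Using relation (4), together with the fact that the \(\sigma(T_\gamma)\) for \(\gamma\) disjoint from a fixed multicurve span a subspace \(V\) of \(\B_2'\), each group can be rewritten as a single term or as zero, after choosing a basis of \(V\).

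This is where the difficulty lies. Different tails give different subspaces \(V\), and the relation \(\sum \sigma(\cdot)\wedge\dots = 0\) in \(\wedge^k\B_2'\) mixes tails. I would therefore use (1) to permute slots and apply (4) in every slot, reducing every term to a normal form \(\sigma(T_{c_1})\wedge\dots\wedge\sigma(T_{c_k})\) over a fixed basis of \(\B_2'\) realized by genus-1 curves, using statement (ii). The main obstacle is then showing that any two realizations of the same wedge of basis elements by disjoint genus-1 separating curves are connected by moves (1), (2) and (4). That would require connectivity of a suitable complex of such multicurves, valid when \(k \leq g-2\).
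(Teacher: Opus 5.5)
Your reduction is the same as the paper's. By Theorem~\ref{mainthm:BCJ-inj}, the kernel of \(F\to H_k^{\mathrm{ab,sep}}(\I_g)\) equals the kernel of \(F\to\wedge^k\B_2'\). So the corollary is equivalent to injectivity of \(F/R\to\wedge^k\B_2'\), a statement about \(\sigma\)-values of disjoint curves on which the statement of Theorem~\ref{mainthm:BCJ-inj} alone says nothing.

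Your check that \(R\) dies in homology is fine, with one small slip. Antisymmetry only gives \(2\A(T_\delta,T_\delta,\dots)=0\); relation (2) holds because the defining map \(\Z^k\to\I_g\) then factors through \(\Z^{k-1}\), and \(H_k(\Z^{k-1})=0\).

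The genuine gap is that this injectivity is never proved, and the route you sketch does not work. Relation (4) only rewrites a slot inside the tail-dependent subspace \(V_t=\langle\sigma(T_\gamma)\mid\gamma\cap(\delta_2\cup\dots\cup\delta_k)=\emptyset\rangle\). You therefore cannot expand slot by slot in a fixed global basis of \(\B_2'\), whose members are in general not in \(V_t\). Moreover, at most \(g\) mod-\(2\) planes are pairwise orthogonal, while a basis of \(\B_2'\) has \(2g^2+g\) elements. So a realizable wedge will in general not be a sum of basis wedges that are themselves realizable, and there is no reason for a normal form of the kind you describe to exist. Without it, connectivity of realizations of a single wedge (even if proved) only identifies generators with equal images. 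It does not touch relations in \(\wedge^k\B_2'\) that mix different tails, and those are exactly what must be shown to lie in \(R\). This is not a formality. In \(\wedge^2(\Z/2\Z)^3\) the wedges \(e_1\wedge e_2\), \(e_1\wedge e_3\), \(e_2\wedge e_3\), \((e_1+e_3)\wedge(e_2+e_3)\) sum to zero. Yet if these four pairs were the only admissible ones, no relation with one entry held fixed would exist among them. Any proof must therefore use the specific geometry of disjoint separating curves.

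The paper's proof does not argue this step either. After the same reduction, it asserts that \(\im\sigma_k\) is the quotient of the free abelian group on symbols \(f_1\otimes\dots\otimes f_k\) by the relations (a)--(e). These are permutations, repeated entries, \(2\)-torsion, and additivity and vanishing in the first slot for sums \(\sum_i\sigma(T_{\gamma_i})\) over curves disjoint from a fixed tail. It then pulls (a)--(c) back to (1)--(3), and (d) together with (e) back to (4). That asserted presentation is precisely your missing injectivity of \(F/R\to\wedge^k\B_2'\). The closing paragraph of Section~\ref{sec:sigma2-inj}, which states that the relations of Proposition~\ref{prop:ab-cyc-relations-mod-2} yield \(J_{\CC}(\AA)+\SS(\AA)\), makes the same claim in mod-\(2\) language. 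So you have correctly located where the real content lies. If you take that presentation as given, as the paper does, your argument closes immediately. As written, however, neither your normal-form reduction nor the unproved connectivity statement supplies it.
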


\begin{proof}
    By Theorem~\ref{mainthm:BCJ-inj}, \(\sigma_k\) induces an isomorphism
    \(H_k^{\mathrm{ab,sep}}(\I_g) \cong \im \sigma_k \subset \wedge^k \B_2'\).
    Thus, the stated relations clearly hold, and it suffices to show that any
    relation in \(\im \sigma_k\) pulls back to them.

    The image \(\im \sigma_k\) is the quotient of the free abelian group on
    generators \(f_1 \otimes \dots \otimes f_k\) (where \(f_i =
    \sigma(T_{\delta_i})\)) modulo the relations:
    \begin{enumerate}[label={(\alph*)}]
        \item \(f_1 \otimes \dots \otimes f_k = f_{\pi(1)} \otimes \dots \otimes f_{\pi(k)}\) for any permutation \(\pi\);
        \item \(f_1 \otimes \dots \otimes f_k = 0\) if \(f_i = f_j\) for some \(i \neq j\);
        \item \(2 (f_1 \otimes \dots \otimes f_k) = 0\);
        \item \(p \otimes f_2 \otimes \dots \otimes f_k + q \otimes f_2 \otimes
        \dots \otimes f_k = (p+q) \otimes f_2 \otimes \dots \otimes f_k\), where
        \(p\) and \(q\) are of the form \(\sum_{i} \sigma(T_{\gamma_i})\) for curves
        \(\gamma_i\) disjoint from \(\delta_2 \cup \dots \cup \delta_k\);
        \item \(p \otimes f_2 \otimes \dots \otimes f_k = 0\), where \(p = \sum_{i}
        \sigma(T_{\gamma_i}) = 0\) for curves \(\gamma_i\) disjoint from
        \(\delta_2 \cup \dots \cup \delta_k\).
    \end{enumerate}

    Pulling back via \(\sigma_k^{-1}\), the tensor relations (a)--(c) map
    directly to the geometric relations (1)--(3). Relation (d) holds
    inherently for general abelian cycles, while relation (e), combined with
    (d), translates precisely to relation (4) of the corollary, completing the
    proof.
\end{proof}

\begin{remark}
    While this paper was in preparation, Gaifullin~\cite{gaifullin26} proved
    that the homology group \(H_k(\I_g)\) is finitely generated for \(k \leq
    g-2\). We note, however, that the results of~\cite{gaifullin26} do not
    yield an explicit linear-algebraic description or a reasonable upper bound
    on the dimension of \(H_k^{\mathrm{ab,sep}}(\I_g)\). Furthermore, our
    proof of Theorem~\ref{mainthm:fin-dim} is more direct and less involved than
    the proof of finite generation presented in~\cite{gaifullin26}.
\end{remark}

\subsection{Notation and conventions}

By \(g(S)\) and \(b(S)\) we denote
the \emph{genus} and the \emph{number of boundary components} of the surface
\(S\), respectively.

For \(h \in G\), we denote by \([h]\) its homology class in \(H_1(G)\). By
\(G^{(2)}\) we denote the subgroup of the group \(G\) generated by the squares
of the elements of \(G\). 

For homology classes \(a, b \in
H_1(\Sigma_g^b)\), we denote by \(a \cdot b\) their algebraic intersection
number. For integral homology classes \(x, y, \dots\) in \(H_1(\Sigma_g^b)\),
we denote their reductions modulo \(2\) by bold letters \(\bx, \by, \dots\).
Similarly, for symplectic submodules \(V, U, \dots\) of \(H_1(\Sigma_g^b)\),
their reductions modulo \(2\) are denoted by \(\bV, \bU, \dots\).

\subsection{Outline}

The paper is structured as follows. In Section~\ref{sec:preliminaries}, we
briefly review the construction of the Birman--Craggs--Johnson homomorphisms,
the Johnson homomorphism, and the definition of the Torelli group for surfaces
with multiple boundary components. Section~\ref{sec:BCJ-subsurface} is devoted
to computing the kernel of the Birman--Craggs--Johnson homomorphism restricted
to the subgroup of \(\I_g^b\) generated by elements supported on a given
subsurface \(S \subset \Sigma_g^b\) (Theorem~\ref{thm:sigma-kernel}). As a
corollary, we establish a criterion, in terms of the Birman--Craggs--Johnson
and Johnson homomorphisms, for an element \(f \in \I(S)\) to lie in \([\I(S),
\I(S)]\) (Theorem~\ref{thm:torelli-abel-vanish}).

Next, Section~\ref{sec:BCJ-k} derives certain relations between abelian cycles
in \(H_k(\I_g)\) that play a key role in the proofs of
Theorems~\ref{mainthm:fin-dim} and~\ref{mainthm:BCJ-inj}. In
Section~\ref{sec:sigma-k-ab-cyc}, we state and prove a general criterion for
the equality of two abelian cycles (Proposition~\ref{prop:inj-k-2}), which
immediately yields the proof of Theorem~\ref{mainthm:fin-dim}. The proof of a
key technical lemma (Lemma~\ref{lem:p-set-1-subspace}) required for
Proposition~\ref{prop:inj-k-2} is deferred to
Section~\ref{sec:proof-of-auxiliary-lemma}. Section~\ref{sec:symp-submod-BCJ}
then collects several auxiliary properties regarding the images of symplectic
submodules under the Birman--Craggs--Johnson homomorphism. Finally,
Section~\ref{sec:sigma2-inj} is dedicated to the proof of
Theorem~\ref{mainthm:BCJ-inj}.

\subsection{Acknowledgments}

The author is deeply grateful to his advisor, A. A. Gaifullin, for
proposition the research topic and for his continuous guidance and insightful
feedback, which significantly improved the manuscript.

The work was supported by the Theoretical Physics and Mathematics Advancement
Foundation ``BASIS'' (grant 25-8-2-20-1).

    \section{Preliminaries on the Birman--Craggs--Johnson and Johnson
homomorphisms} \label{sec:preliminaries}

\subsection{Birman--Craggs--Johnson homomorphism} 
In this subsection, we briefly recall the construction of the
Birman--Craggs--Johnson homomorphisms; for more details,
see~\cite{johnson-bcj}.

Let \(b \in \{0, 1\}\). Let \(\B(\Sigma_g^b)\) be a commutative algebra over \(\Z/2\Z\),
generated by \(\overline{\bx}\) for all \(\bx \in H_1(\Sigma_g^b;
\Z/2\Z)\) and subject to the relations:
\begin{enumerate}
    \item \(\overline{\bx+\by} = \overline{\bx} + \overline{\by} + (\bx
    \cdot \by)\), where \(\bx \cdot \by\) denotes the intersection number
    \(\bmod\; 2\);
    \item \(\overline{\bx}^2 = \overline{\bx}\).
\end{enumerate}

Thus, for a basis \(\be_1, \dots, \be_{2g}\) of \(H_1(\Sigma_g^b;
\Z/2\Z)\) we get that \(\B(\Sigma_g^b)\) is the algebra of Boolean
polynomials in formal variables \(\overline{\be}_1, \dots,
\overline{\be}_{2g}\).

The \emph{Arf invariant} is the quadratic polynomial given by
\[
\Arf = \sum_{j=1}^{g} \overline{\ba}_j \overline{\bb}_j
,\]
where \(\{\ba_1, \bb_1, \dots, \ba_g, \bb_g\}\) is a symplectic basis for
\(H_1(\Sigma_g^b; \Z/2\Z)\). It is well-known that \(\Arf\) does not depend on
the choice of the symplectic basis.

Let \(\B'(\Sigma_g^b) = \B(\Sigma_g^b) / \left( \Arf \right)\), and let
\(\B_k'(\Sigma_g^b) \subset \B'(\Sigma_g^b)\) denote the space of polynomials of
degree at most \(k\). We will simply write \(\B_k\) and \(\B_k'\) when the
surface is clear from the context.

\emph{The Birman--Craggs--Johnson homomorphisms} are
\begin{align*}
    \sigma &\colon \I_g^1 \to \B_3, \\
    \sigma &\colon \I_g \to \B_3'
.\end{align*}
We denote both maps by the same symbol \(\sigma\), as the context or the
domain will prevent any confusion.

The Birman--Craggs--Johnson homomorphism has a deep topological origin,
arising from Rokhlin invariant of \(3\)-dimensional homology spheres. We will
need the following formula for the value of the Birman--Craggs--Johnson
homomorphism \(\sigma \colon \I_g \to \B_3'\) on the Dehn twist about a
separating simple closed curve~\cite[Lemma~12a]{johnson-bcj}:
\begin{equation} \label{eq:BCJ-sep}
    \sigma(T_{\gamma}) = \sum_{i=1}^{g'} \overline{\ba}_i \overline{\bb}_i
,\end{equation}
where \(g'\) is the genus of a subsurface \(R\) bounded by \(\gamma\), and
\(\{\ba_1, \bb_1, \dots, \ba_{g'}, \bb_{g'}\}\) is a symplectic basis for
\(H_1(R; \Z/2\Z)\). Note that since \(\Arf = 0\) in \(\B_3'\), this
expression is well-defined; that is, it is independent of the choice of the
subsurface \(R\) bounded by \(\gamma\).

We call a subgroup \(V \subset H_1(\Sigma_g)\) a \emph{symplectic submodule}
if the restriction of the intersection form is unimodular. Equivalently, this
means that there is a splitting \(H_1(\Sigma_g) = V \oplus V^{\perp}\) with
respect to the intersection form. A \emph{symplectic subspace} \(\bV \subset
H_1(\Sigma_g; \Z/2\Z)\) is defined analogously.

Let \(\gamma\) be a separating simple closed curve \(\gamma\) on \(\Sigma_g\),
inducing splitting \(H_1(\Sigma_g) = V \oplus V^{\perp}\).
Formula~\eqref{eq:BCJ-sep} implies that the value \(\sigma(T_\gamma)\) depends only on the
unordered splitting \(\{V, V^{\perp}\}\).
We define
\[
\sigma(V) = \sigma(V^{\perp}) = \sigma(T_\gamma)
.\]
Moreover, \(\sigma(V)\) depends only on \(\bV\) (the reduction of \(V\) modulo
\(2\)). We then set \(\sigma(\bV) = \sigma(V)\).

The following proposition is direct corollary of~\cite[Lemma~7.7]{vladimirov}.
\begin{proposition} \label{prop:mod-2-sigma}
    Let \(\bU, \bV\) be \(2\)-dimensional symplectic subspaces of
    \(H_1(\Sigma_g; \Z/2\Z)\). Then we have \(\sigma(\bU) = \sigma(\bV)\) if and only if
    \(\bU = \bV\).
\end{proposition}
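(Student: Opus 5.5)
The ``if'' direction is immediate from the definition. For the converse I would make \(\B'\) concrete as a ring of functions. A quadratic refinement of the intersection form is a map \(q\colon H_1(\Sigma_g;\Z/2\Z)\to\Z/2\Z\) with \(q(\bx+\by)=q(\bx)+q(\by)+\bx\cdot\by\). The assignment \(\overline{\bx}\mapsto(q\mapsto q(\bx))\) respects both defining relations of \(\B\). A dimension count (\(2^{2g}\) monomials, \(2^{2g}\) refinements), together with the fact that every function is a polynomial, shows it identifies \(\B\) with the algebra of all \(\Z/2\Z\)-valued functions on the set \(\Omega\) of refinements. Under this identification \(\Arf\) becomes the classical Arf invariant \(q\mapsto\Arf(q)\), which is idempotent. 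Hence the ideal \((\Arf)=\{f\cdot\Arf\}\) is exactly the set of functions supported on \(\{\Arf(q)=1\}\). Therefore \(\B'\) is the algebra of functions on the set \(\Omega_0\) of even refinements, those with \(\Arf(q)=0\).

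For a \(2\)-dimensional symplectic subspace \(\bU\) with basis \(\ba,\bb\), \(\sigma(\bU)=\overline{\ba}\,\overline{\bb}\) is then the function \(q\mapsto q(\ba)q(\bb)=\Arf(q|_{\bU})\). The claim thus reduces to the following: if \(\bU\neq\bV\), there is an even \(q\) with \(\Arf(q|_{\bU})\neq\Arf(q|_{\bV})\). The statement needs \(g\geq3\), implicit in the paper. For \(g=2\) one has \(\sigma(\bU)=\sigma(\bU^{\perp})\) with \(\bU\neq\bU^{\perp}\), so the claim fails there.

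To construct \(q\), I would use the additivity of \(\Arf\) over orthogonal splittings together with the translation trick. For \(\bw\in H_1(\Sigma_g;\Z/2\Z)\), set \(q_{\bw}(\bx)=q(\bx)+\bx\cdot\bw\), which satisfies \(\Arf(q_{\bw})=\Arf(q)+q(\bw)\). Write \(H_1=\bU\oplus\bU^{\perp}\). Since \(\bV\neq\bU\) and both are \(2\)-dimensional, \(\bV\not\subset\bU\). I would split into cases according to \(\dim(\bU\cap\bV)\in\{0,1\}\) and whether \(\bV\subset\bU^{\perp}\).

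In the case \(\bV\subset\bU^{\perp}\), I write \(H_1=\bU\oplus\bV\oplus\bW\) with \(\bW\neq0\), which is where \(g\geq3\) is used. I then prescribe \(q\) independently on the three summands: \(\Arf(q|_{\bU})=1\), \(\Arf(q|_{\bV})=0\), and \(\Arf(q|_{\bW})=1\), which makes \(q\) even.

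In the general case I start from an even \(q\) and modify it by \(\bw\) lying in \(\bU^{\perp}\cap\bV^{\perp}\), or more flexibly in suitable subspaces. The change \(q\mapsto q_{\bw}\) leaves \(q|_{\bU}\) fixed but alters \(q(\bx)\) for those \(\bx\in\bV\) with \(\bx\cdot\bw=1\), and I choose \(\bw\) so that the parity of \(\Arf(q|_{\bV})\) flips. To keep \(q\) even, I compensate by a second change supported in an orthogonal genus-\(1\) piece disjoint from \(\bU+\bV\), or pair two such \(\bw\)'s appropriately.

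I expect the main obstacle to be the case where \(\bU\) and \(\bV\) overlap nontrivially, so that \(\bU+\bV\) has dimension \(3\) or \(4\) and is possibly degenerate. There one must check that a suitable \(\bw\) exists with control over \(\bw\cdot\ba,\bw\cdot\bb\) and the intersections with a basis of \(\bV\), and that enough room remains to restore evenness. I would try first to reduce to a normal form: choose a symplectic basis adapted to \(\bU+\bV\), which lies in a symplectic subspace of dimension at most \(6\). That leaves a finite check, and the extra genus again supplies the compensating summand. The full argument is in~\cite[Lemma~7.7]{vladimirov}.
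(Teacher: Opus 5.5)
Your reduction is sound and takes a genuinely different route from the paper, which gives no argument and simply cites \cite[Lemma~7.7]{vladimirov}. Three parts are correct: identifying \(\B'\) with \(\Z/2\Z\)-valued functions on even quadratic refinements and \(\sigma(\bU)\) with \(q\mapsto\Arf(q|_{\bU})\); your treatment of the case \(\bV\subset\bU^{\perp}\); and your observation that \(g\geq 3\) is needed, since for \(g=2\) one has \(\sigma(\bU)=\sigma(\bU^{\perp})\).

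The gap is the case \(\bV\not\subset\bU^{\perp}\), which is the substance of the statement. You only sketch it and then defer to the same citation, and the sketch fails as written.
\begin{itemize}
\item Translating by a vector \(\bx\in\bU^{\perp}\cap\bV^{\perp}\) leaves \(q|_{\bV}\) unchanged, because no \(\bz\in\bV\) has \(\bz\cdot\bx=1\). So it can never flip \(\Arf(q|_{\bV})\). What a translation actually does is \(\Arf(q_{\bx}|_{\bV})=\Arf(q|_{\bV})+q(\bx_{\bV})\), where \(\bx_{\bV}\in\bV\) is the projection of \(\bx\).
\item Your compensating ``orthogonal genus-\(1\) piece'' must be orthogonal to the symplectic closure of \(\bU+\bV\). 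That closure can be \(6\)-dimensional, e.g.\ for \(\bU=\langle\ba_1,\bb_1\rangle\), \(\bV=\langle\ba_1+\ba_2,\bb_1+\ba_3\rangle\). For \(g=3\) no such piece then exists, so the sketch does not cover the range \(g\geq 3\) you claim.
\end{itemize}

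The missing step is easy to supply within your framework by decoupling the two requirements.
\begin{itemize}
\item Put \(\bX=\bU+\bV\), let \(\ba,\bb\) be a basis of \(\bU\), and pick \(\bz\in\bV\setminus\bU\). Values of a refinement on a basis are arbitrary, so extend \(\ba,\bb,\bz\) to a basis and choose \(q\) with \(q(\ba)=q(\bb)=1\) and \(q(\bz)=0\).
\item Then \(q|_{\bU}\) is odd, so \(q\) equals \(1\) on all nonzero vectors of \(\bU\). Also \(\Arf(q|_{\bV})=0\), since an odd refinement of a \(2\)-dimensional symplectic space is \(1\) on all three nonzero vectors.
\item If \(q\) is odd, replace it by \(q_{\bx}\) with \(\bx\in\bX^{\perp}\) and \(q(\bx)=1\). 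This fixes \(q|_{\bX}\) and flips \(\Arf(q)\).
\item Such an \(\bx\) exists unless \(q\) vanishes on \(\bX^{\perp}\). That forces \(\bX^{\perp}\) to be isotropic, hence \(\bX^{\perp}=\bX\cap\bX^{\perp}\subset\bX\cap\bU^{\perp}\).
\item Comparing dimensions (\(2g-\dim\bX\) against at most \(\dim\bX-2\)), this happens only when \(g=3\), \(\bU\cap\bV=0\) and \(\bX=\bU\oplus\bX^{\perp}\).
\item In that case write \(\bz=\bx_0+\by\) with \(\bx_0\in\bU\) and \(\by\in\bX^{\perp}\). Here \(\by\neq 0\) because \(\bz\notin\bU\), and \(\bx_0\neq 0\) because \(\bz\) pairs nontrivially with \(\bV\subset\bX\).
\item Hence \(0=q(\bz)=q(\bx_0)+q(\by)=1+q(\by)\), so \(\bx=\by\) works.
\end{itemize}
This yields an even \(q\) with \(\Arf(q|_{\bU})\neq\Arf(q|_{\bV})\) for all \(g\geq 3\), with no case distinction.
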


\subsection{Torelli groups with multiple boundary components}

Recall the definition of the Torelli group for surfaces with multiple
boundary components (for details, see~\cite{putman-johnsonkernel, putman-cutting}).

Let \(S\) be a connected compact surface. To define the Torelli group
of the surface \(S\), we consider an embedding \(i \colon S \hookrightarrow
\Sigma_g^b\), where \(b \in \{0,1\}\). This embedding induces a homomorphism
\(i_{*} \colon \Mod(S) \to \Mod(\Sigma_g^b)\). We then define
\[
\I(S, \Sigma_g^b) = i_{*}^{-1}(\I_g^b)
.\]
Putman~\cite{putman-cutting} showed that the group \(\I(S, \Sigma_g^b)\)
depends only on the induced partition \(P\) of the boundary components
of \(S\).

\subsection{Johnson homomorphism}

In the 1980s, Johnson~\cite{johnson-ab} constructed the homomorphisms from
Torelli groups to the free abelian groups
\begin{align*}
    \tau &\colon \I_g^1 \to \wedge^3 H_1(\Sigma_g^1), \\
    \tau &\colon \I_g \to \wedge^3 H_1(\Sigma_g) / H_1(\Sigma_g).
\end{align*}
Here, the inclusion \(H_1(\Sigma_g) \hookrightarrow \wedge^3 H_1(\Sigma_g)\)
is given by \(x \mapsto \Omega \wedge x\), where \(\Omega \in \wedge^2
H_1(\Sigma_g)\) is the element dual to the intersection form. We denote both
homomorphisms by the same symbol; this does not lead to any confusion.

Johnson~\cite[Lemma~4A]{johnson-ab} showed that \(\tau(T_{\gamma}) = 0\) for
any separating simple closed curve \(\gamma\) on \(\Sigma_g\). Later,
Johnson~\cite{johnson2} proved that \(\ker \tau = \K_g\), where
\(\K_g \subset \I_g\) denotes the subgroup generated by all Dehn
twists about separating simple closed curves, now called the \emph{Johnson
kernel}. Putman~\cite{putman-johnsonkernel} further extended Johnson's results
to Torelli groups with multiple boundary components.

    \section{The kernel of the Birman--Craggs--Johnson homomorphism for
subsurfaces} \label{sec:BCJ-subsurface}

% Inclusion \(\Sigma_g^b \hookrightarrow \Sigma_{g+b}\) induces inclusion
% \(\I_g^b \hookrightarrow \I_{g+b}\). So we can consider \(S \subset
% \Sigma_{g+b}\) and consider \(\I(S, \Sigma_{g+b})\).

% restriction of johnson homomorphism from \(\Sigma_{g+b}\) to \(S\) and
% restriction of BCJ homomorphism from \(\Sigma_{g+b}\) to \(S\)

% Is it something like 

% \(\wedge^3 H_1(\overline{S}) / H_1(\overline{S}) \oplus \B_2'(\overline{S})\)

% image of 

% ---------------------------------------------------------------------

Johnson showed~\cite{johnson3} that for \(b \in \{0,1\}\) and \(g
\geq 3\), the kernel of the Birman--Craggs--Johnson homomorphism \(\C_g^b = \ker \sigma\)
is equal to \((\I_g^b)^{(2)}\). The purpose of this section is to extend this
result to Torelli groups with multiple boundary components.

Throughout this section, we follow the conventions from \cite{putman-cutting} and
\cite{putman-johnsonkernel}. The proof of Theorem~\ref{thm:sigma-kernel}
is heavily inspired by the proof of Theorem~A from~\cite{putman-johnsonkernel}.

\subsection{Capped embeddings}
Let \(\Sigma_g^b\) be a compact surface of genus \(g\) with \(b \leq
1\) boundary components. We call an embedding \(i \colon S \hookrightarrow \Sigma_g\)
\emph{capped} if the connected components of \(\Sigma_g
\setminus S\) are homeomorphic to \(\Sigma_{h_j}^1\) with \(h_j > 0\). Similarly,
an embedding \(i \colon S \hookrightarrow \Sigma_g^1\) is called
\emph{capped} if the connected components of \(\Sigma_g^1
\setminus S\) are homeomorphic to \(\Sigma_{h_j}^1\) with \(h_j > 0\) and an annulus \(A\), where
\(\partial \Sigma_g^1 \subset \partial A\).

Note that, for a capped embedding \(i \colon S \hookrightarrow
\Sigma_g^b\), the induced homomorphism \(i_{*} \colon \Mod(S) \to \Mod(\Sigma_g^b)\)
is injective (see~\cite[Theorem~3.18]{farbmarg}). Thus, for
a capped embedding, the homomorphism \(i_{*}\) induces an isomorphism between the group \(\I(S,
\Sigma_g^b)\) and the subgroup of \(\I_g^b\)
consisting of the elements that are realized by homeomorphisms
supported on \(S\).

In what follows, we consider only capped embeddings; that is, by \(\I(S,
\Sigma_g^b)\) we imply that the embedding \(S \hookrightarrow
\Sigma_g^b\) is capped.
In addition, we will sometimes write simply \(\I(S)\) instead of \(\I(S, \Sigma_g^b)\),
since for a capped embedding, the partition of the boundary components
\(\partial S\) is always \(P = \{\{\partial_1\}, \dots,
\{\partial_m\}\}\), where \(\partial S = \partial_1 \cup \dots \cup
\partial_m\).

\subsection{Relations between Dehn twists in \(H_1(\I(S))\)}

In this subsection, we derive certain relations between Dehn twists about
separating simple closed curves in \(H_1(\I(S))\). Note that although we know
from~\cite[Corollary to Lemma~3]{johnson3} that \(2 [T_{\gamma}] = 0\) in
\(H_1(\I_g^b)\), it is a priori unknown whether this relation holds in
\(H_1(\I(S))\). We prove that this is indeed the case
(see~Lemma~\ref{lem:abel-ord-2}). This will imply that the order of the
summands in~\eqref{eq:sum-relation-1} and~\eqref{eq:sum-relation-2} does not
matter.

\begin{lemma} \label{lem:sum-relation-1}
    Let \(\gamma_1, \gamma_2, \gamma_3\) be pairwise disjoint, nonisotopic,
    separating simple closed curves on \(S\) such that:
    \begin{itemize}
        \item \(\gamma_1, \gamma_2\) are of genus \(1\);
        \item \(\gamma_1 \cup \gamma_2 \cup \gamma_3\) bounds a pair of pants.
    \end{itemize}
    We then have
    \begin{equation} \label{eq:sum-relation-1}
        [T_{\gamma_1}] + [T_{\gamma_3}] = [T_{\gamma_2}]
    .\end{equation}
\end{lemma}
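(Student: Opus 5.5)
The plan is to derive \eqref{eq:sum-relation-1} from a lantern relation inside a subsurface of \(S\), and then to show that the extra bounding pair maps it produces cancel in \(H_1(\I(S))\).

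\emph{Normalizing the configuration.} Let \(P\) be the pair of pants bounded by \(\gamma_1\cup\gamma_2\cup\gamma_3\). Since \(\gamma_1,\gamma_2\) have genus \(1\) and the relation is asymmetric in \(\gamma_1\) and \(\gamma_2\), I expect the configuration to be the following.
\begin{itemize}
\item \(\gamma_1\) cuts off a one-holed torus \(T_1\) on the side away from \(P\).
\item \(\gamma_2\) bounds a genus-\(1\) subsurface \(R = T_1\cup P \cup Q\), where \(Q\) is the genus-\(0\) piece cut off by \(\gamma_3\). This piece contains boundary components of \(S\), since \(\gamma_3\) is not peripheral.
\end{itemize}
I will first check that this is the only configuration compatible with the hypotheses, up to swapping roles. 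All subsequent computations happen in \(T_1\cup P\), which is a two-holed torus with boundary \(\gamma_2\cup\gamma_3\). Because the embedding is capped, every curve in it that is nullhomologous in \(\Sigma_g^b\) gives an element of \(\I(S)\).

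\emph{Lantern step.} Choose a nonseparating curve \(c\subset T_1\). Cutting \(T_1\cup P\) along \(c\) gives a four-holed sphere with boundary \(\gamma_2,\gamma_3,c',c''\). The lantern relation there reads
\[
T_{\gamma_2}T_{\gamma_3}T_c^2 = T_a T_b T_d,
\]
where the three interior curves separate the boundary components in pairs. Choose the lantern so that \(d\) separates \(\{c',c''\}\) from \(\{\gamma_2,\gamma_3\}\); then \(d=\gamma_1\). The curves \(a\) and \(b\) are nonseparating in \(T_1\cup P\), and each is disjoint from \(c\) and homologous to \(c\) in \(\Sigma_g^b\), because they differ by the nullhomologous curve \(\gamma_3\). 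Hence \(T_aT_c^{-1}\) and \(T_bT_c^{-1}\) are bounding pair maps in \(\I(S)\), and in \(H_1(\I(S))\)
\[
[T_{\gamma_2}]+[T_{\gamma_3}] = [T_{\gamma_1}] + [T_aT_c^{-1}] + [T_bT_c^{-1}].
\]
So \eqref{eq:sum-relation-1} is equivalent to
\[
[T_aT_c^{-1}]+[T_bT_c^{-1}] = 2[T_{\gamma_3}].
\]

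\emph{Main obstacle.} Proving this last identity is the key difficulty, because at this stage we do not yet know that \(2[T_{\gamma_3}]=0\) in \(H_1(\I(S))\). I would try two routes.
\begin{itemize}
\item \emph{A second lantern.} The bounding pairs \((a,c)\) and \((b,c)\) each cobound, together with \(\gamma_3\), a subsurface containing \(Q\). A second lantern in the four-holed sphere obtained by cutting along \(a\) (or \(b\)) should express \(T_aT_c^{-1}\cdot T_bT_c^{-1}\) as \(T_{\gamma_3}^2\) times a product of a bounding pair map and the inverse of a conjugate of it.
\item \emph{A conjugation argument.} Otherwise, I would find \(\varphi\in\I(S)\) supported in \(T_1\cup P\), for example a product of separating twists and bounding pair maps, with \(\varphi(a)=b\) or \(\varphi(b)=a\) up to reversing a bounding pair. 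Then \([T_bT_c^{-1}]\) can be compared with \([T_aT_c^{-1}]\) directly, because conjugation by an element of \(\I(S)\) acts trivially on \(H_1(\I(S))\).
\end{itemize}
If a sign mismatch persists in either route, I would replace \(c\) by a different nonseparating curve \(c_1\subset T_1\) meeting \(c\) once. I would then combine the two resulting lantern identities, using that the bounding pair maps for \(c\) and \(c_1\) are related by elements of \(\I(S)\) supported in \(T_1\cup P\). The goal is to eliminate the bounding pair terms and leave exactly \([T_{\gamma_1}]+[T_{\gamma_3}]=[T_{\gamma_2}]\).
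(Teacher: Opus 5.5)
\textbf{There is a genuine gap: the lantern is placed in the wrong two-holed torus, and as set up your plan is circular.}

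In $T_1\cup P$ the lantern reads $T_{\gamma_2}T_{\gamma_3}T_c^2=T_{\gamma_1}T_aT_b$. Here the torus-cutting curve $\gamma_1$ stands alone, and the two bounding pair terms in fact cancel. Say $a$ cobounds a pair of pants with $c,\gamma_2$, and $b$ does so with $c,\gamma_3$. Cutting $T_1\cup P$ along $a\cup c$, or along $c\cup b$, gives two pairs of pants with matching boundary data. So some $\phi$ supported in $T_1\cup P$ has $\phi(a)=c$ and $\phi(c)=b$. It acts on $H_1(\Sigma_g)$ by a transvection along $[c]$. Composing with a power of $T_c$, which fixes $a,b,c$, puts it in $\I(S)$, and then $\phi(T_aT_c^{-1})\phi^{-1}=(T_bT_c^{-1})^{-1}$.

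Hence your lantern gives $[T_{\gamma_2}]+[T_{\gamma_3}]=[T_{\gamma_1}]$, and your ``key identity'' is equivalent to $2[T_{\gamma_3}]=0$. This cannot come out of computations inside $T_1\cup P$, which is where all three of your routes stay. Capping $\gamma_2$ by a disk sends every Torelli element supported in $T_1\cup P$ into $\ker(\Mod(\Sigma_1^1)\to \mathrm{SL}_2(\Z))=\langle T_{\gamma_3}\rangle\cong\Z$, because such elements act trivially on $H_1(T_1)\subset H_1(\Sigma_g)$. Under this map $T_{\gamma_1},T_{\gamma_3}\mapsto 1$ and $T_{\gamma_2}\mapsto 0$, so \eqref{eq:sum-relation-1} would become $2=0$. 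In $H_1(\I(S))$ the identity $2[T_{\gamma_3}]=0$ is Lemma~\ref{lem:abel-ord-2}, whose proof uses the present lemma, so importing it would be circular. Your claim that your configuration is the only one allowed is also false: $\gamma_1$ and $\gamma_2$ may both cut off one-holed tori.

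\textbf{The missing idea is that \eqref{eq:sum-relation-1} needs genus behind $\gamma_2$,} the curve that appears alone in it. In the configuration where the lemma is actually used (the proof of Lemma~\ref{lem:abel-ord-2}), the curve appearing alone always cuts off a one-holed torus on the side away from the pair of pants. So let $T_2$ be the one-holed torus cut off by $\gamma_2$, and run your lantern in $T_2\cup P$, whose boundary is $\gamma_1\cup\gamma_3$. Take $c\subset T_2$, with $\gamma_2$ as the curve surrounding $c',c''$. This gives $T_{\gamma_1}T_{\gamma_3}=T_{\gamma_2}(T_aT_c^{-1})(T_bT_c^{-1})$. The same conjugation argument, now inside $T_2\cup P$, cancels the two bounding pair terms in $H_1(\I(S))$. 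That yields $[T_{\gamma_1}]+[T_{\gamma_3}]=[T_{\gamma_2}]$ directly, with no input about $2[T_\gamma]$.
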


\begin{proof}
    The proof of the lemma follows step-by-step the proof of the Lemma~5.1
    from~\cite{vladimirov}.
\end{proof}

\begin{lemma} \label{lem:sum-relation-2}
    Let \(\gamma_1, \gamma_2, \gamma_3\) be pairwise disjoint, nonisotopic,
    separating simple closed curves on \(S\) such that
    \begin{itemize}
        \item \(\gamma_1 \cup \gamma_2 \cup \gamma_3\) bounds a pair of pants;
        \item the subsurface bounded by \(\gamma_3\), that
        does not contain \(\gamma_1 \cup \gamma_2\) has positive
        genus.
    \end{itemize}
    Then we have
    \begin{equation} \label{eq:sum-relation-2}
        [T_{\gamma_1}] + [T_{\gamma_2}] = [T_{\gamma_3}]
    .\end{equation}
\end{lemma}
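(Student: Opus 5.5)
The plan is to reduce Lemma~\ref{lem:sum-relation-2} to Lemma~\ref{lem:sum-relation-1}, using induction on the genera of the subsurfaces cut off by \(\gamma_1\) and \(\gamma_2\). Let \(P\) be the pair of pants bounded by \(\gamma_1 \cup \gamma_2 \cup \gamma_3\). Let \(R_i\) be the component of \(S \setminus \gamma_i\) not containing \(P\) for \(i=1,2\), and let \(R_3\) be the positive-genus component cut off by \(\gamma_3\) away from \(\gamma_1\cup\gamma_2\). Write \(h_i\) for the genus of \(R_i\). Since \(\gamma_1,\gamma_2\) are nonisotopic to each other and to \(\gamma_3\), each \(R_i\) must contain either genus or extra boundary components. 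I will first treat the basic case \(h_1 = 1\) (after relabeling).

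In that case, \(R_3\cup P\) is a subsurface cut off by \(\gamma_2\) containing \(\gamma_1\) and \(\gamma_3\). The relation \([T_{\gamma_1}]+[T_{\gamma_2}]=[T_{\gamma_3}]\) is equivalent to Lemma~\ref{lem:sum-relation-1} applied with the roles of the curves permuted, as long as \(2[T_\gamma]=0\) is not needed. Since we are not yet allowed to use Lemma~\ref{lem:abel-ord-2}, I will track signs carefully. Lemma~\ref{lem:sum-relation-1} with genus-one curves \(\gamma_1\) and \(\gamma_3'\) gives an identity in which \(\gamma_3\) and \(\gamma_2\) occupy fixed positions. So in the base case I will choose auxiliary genus-one curves inside \(R_3\), which is where the positive-genus hypothesis is used. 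These produce two applications of Lemma~\ref{lem:sum-relation-1} whose difference yields exactly \eqref{eq:sum-relation-2}. Concretely, let \(\delta\subset R_3\) be a genus-one separating curve, and let \(\varepsilon\) be the curve bounding \(R_1\cup\) (the genus-one piece of \(\delta\)) together with a band. Then \(\gamma_1,\delta,\varepsilon\) and \(\varepsilon,\gamma_2,\gamma_3\) each bound pants in the configuration of Lemma~\ref{lem:sum-relation-1} or of its inductive predecessor.

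For general \(h_1,h_2\) (possibly zero genus, with extra boundary components), I will decompose \(R_1\) along a separating curve into a genus-one piece plus the rest. This writes \(T_{\gamma_1}\) via a pants relation with smaller complexity. The induction then assembles the relation \eqref{eq:sum-relation-2} from smaller instances, again using genus in \(R_3\) to move handles so that every intermediate configuration satisfies the hypotheses. This also covers \(R_i\) of genus zero containing boundary components of \(S\), since capped embeddings give each boundary component a positive-genus cap in the ambient surface. However, the relation must hold in \(\I(S)\), not in the ambient group, so every auxiliary curve must lie in \(S\). The genus of \(R_3\) supplies room for this.

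I expect the main obstacle to be this base-case bookkeeping. The task is to find auxiliary curves inside \(S\) so that two or three applications of Lemma~\ref{lem:sum-relation-1} combine additively, with the correct signs and without using order-two torsion, into \eqref{eq:sum-relation-2}. The alternative is to mimic directly the lantern-relation argument behind Lemma~5.1 of~\cite{vladimirov}. That would mean embedding a lantern in \(P\cup R_3\) whose boundary curves are \(\gamma_1,\gamma_2\), a curve in \(R_3\), and \(\gamma_3\)-related curves. Abelianizing the lantern relation gives a linear relation among twists, and the nonseparating twists cancel in \(H_1(\I(S))\) because they appear as bounding pair maps that are conjugate within \(\I(S)\). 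If the reduction becomes unwieldy, I would fall back to this direct lantern argument.
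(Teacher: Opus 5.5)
Your main route has a genuine gap at the base case, and the gap cannot be closed by more careful bookkeeping.

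\textbf{The concrete configuration fails.} The band joining \(R_1\) to a handle inside \(R_3\) has to cross \(\gamma_3\). So your curve \(\varepsilon\) cannot be made disjoint from \(\gamma_3\) unless two curves become isotopic, and \(\varepsilon,\gamma_2,\gamma_3\) do not bound a pair of pants.

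\textbf{The sign issue is the whole content of the lemma.} When \(R_1\) and \(R_2\) are one-holed tori, Lemma~\ref{lem:sum-relation-1} gives \([T_{\gamma_1}]+[T_{\gamma_3}]=[T_{\gamma_2}]\). Hence \eqref{eq:sum-relation-2} is equivalent to \(2[T_{\gamma_1}]=0\), which is the torsion statement of Lemma~\ref{lem:abel-ord-2}. The paper deduces that statement from Lemmas~\ref{lem:sum-relation-1} and~\ref{lem:sum-relation-2} together.

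\textbf{No auxiliary curves can extract it from Lemma~\ref{lem:sum-relation-1}.} Suppose \(\partial S\neq\emptyset\) and \(g(S)\geq 3\). Assign \(1\in\Z\) to every twist about a curve cutting off a one-holed torus or a boundary-parallel annulus, and \(0\) to every other separating twist. In any configuration of Lemma~\ref{lem:sum-relation-1}, the third curve is never of the first kind. So every instance reads \(1+0=1\), while \eqref{eq:sum-relation-2} with \(R_1,R_2\) one-holed tori reads \(1+1=0\). The genus of \(R_3\) therefore cannot enter merely as room for more instances of Lemma~\ref{lem:sum-relation-1}; a genuinely new relation is needed.

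\textbf{Your fallback is the right kind of input, but its decisive step is only asserted.} The paper itself simply defers to the proof of Lemma~5.6 of~\cite{vladimirov} with \(k=1\). Here is how the lantern can be set up.
Since \(g(R_3)>0\), there is a pair of pants \(Q\subset R_3\) with \(\partial Q=\gamma_3\cup e\cup e'\) and \(R_3\setminus Q\) connected, so \(e,e'\) are nonseparating. This is exactly where the genus hypothesis is used.
Then \(P\cup Q\) is a lantern with boundary curves \(\gamma_1,\gamma_2,e,e'\) and interior curves \(\gamma_3,y,z\). Here \(y\) separates \(\{\gamma_1,e\}\) from \(\{\gamma_2,e'\}\), and \(z\) separates \(\{\gamma_1,e'\}\) from \(\{\gamma_2,e\}\).
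The lantern relation gives \([T_{\gamma_3}]+[T_yT_{e'}^{-1}]+[T_zT_e^{-1}]=[T_{\gamma_1}]+[T_{\gamma_2}]\) in \(H_1(\I(S))\). Both bounding pair maps lie in \(\I(S)\), because each pair cobounds a subsurface containing \(R_2\) whose remaining boundary components lie in \(\partial S\).

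The lemma is thus equivalent to \([T_yT_{e'}^{-1}]+[T_zT_e^{-1}]=0\), and this is what you must prove. One way would be to exhibit an element of \(\I(S)\) that conjugates \(T_zT_e^{-1}\) to the inverse \((T_yT_{e'}^{-1})^{-1}\). That element must be supported in \(S\) and act trivially on the homology of the capped surface. Conjugacy of the two maps to each other would give \(2[T_yT_{e'}^{-1}]\), not \(0\). Your phrase ``conjugate within \(\I(S)\)'' supplies neither the conjugating element nor the correct sign, and that is the heart of the proof.
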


\begin{proof}
    The proof of the lemma follows step-by-step the proof of the Lemma~5.6
    from~\cite{vladimirov} in case \(k = 1\).
\end{proof}

\subsection{Definition of the surfaces \(\widehat{S}\) and \(S'\) for \(S\)}

Consider a capped embedding \(S \hookrightarrow \Sigma_g^b\), where \(b(S)
\geq 2\). Let \(\beta\) denote one of the boundary components of \(\partial
S\); in the case where \(b = 1\), we choose the component of \(\partial S\)
that is isotopic to \(\partial \Sigma_g^1\). We denote by \(\widehat{S}\) the
surface obtained from \(S\) by gluing a disk to the boundary component
\(\beta\). Finally, we choose a compact subsurface \(S' \subset S\) such that
the subsurface \(S \setminus \Int S'\) is homeomorphic to a pair of pants,
where \(\beta\) is a boundary component of this pair of pants
(see~Fig.~\ref{fig:splitting-subsurface}).

\begin{figure}[H]
    \centering
    \def\svgwidth{.3\columnwidth}%
    %% Creator: Inkscape 1.4.2 (ebf0e940d0, 2025-05-08), www.inkscape.org
%% PDF/EPS/PS + LaTeX output extension by Johan Engelen, 2010
%% Accompanies image file 'splitting-subsurface.pdf' (pdf, eps, ps)
%%
%% To include the image in your LaTeX document, write
%%   \input{<filename>.pdf_tex}
%%  instead of
%%   \includegraphics{<filename>.pdf}
%% To scale the image, write
%%   \def\svgwidth{<desired width>}
%%   \input{<filename>.pdf_tex}
%%  instead of
%%   \includegraphics[width=<desired width>]{<filename>.pdf}
%%
%% Images with a different path to the parent latex file can
%% be accessed with the `import' package (which may need to be
%% installed) using
%%   \usepackage{import}
%% in the preamble, and then including the image with
%%   \import{<path to file>}{<filename>.pdf_tex}
%% Alternatively, one can specify
%%   \graphicspath{{<path to file>/}}
%% 
%% For more information, please see info/svg-inkscape on CTAN:
%%   http://tug.ctan.org/tex-archive/info/svg-inkscape
%%
\begingroup%
  \makeatletter%
  \providecommand\color[2][]{%
    \errmessage{(Inkscape) Color is used for the text in Inkscape, but the package 'color.sty' is not loaded}%
    \renewcommand\color[2][]{}%
  }%
  \providecommand\transparent[1]{%
    \errmessage{(Inkscape) Transparency is used (non-zero) for the text in Inkscape, but the package 'transparent.sty' is not loaded}%
    \renewcommand\transparent[1]{}%
  }%
  \providecommand\rotatebox[2]{#2}%
  \newcommand*\fsize{\dimexpr\f@size pt\relax}%
  \newcommand*\lineheight[1]{\fontsize{\fsize}{#1\fsize}\selectfont}%
  \ifx\svgwidth\undefined%
    \setlength{\unitlength}{202.6953966bp}%
    \ifx\svgscale\undefined%
      \relax%
    \else%
      \setlength{\unitlength}{\unitlength * \real{\svgscale}}%
    \fi%
  \else%
    \setlength{\unitlength}{\svgwidth}%
  \fi%
  \global\let\svgwidth\undefined%
  \global\let\svgscale\undefined%
  \makeatother%
  \begin{picture}(1,0.85643027)%
    \lineheight{1}%
    \setlength\tabcolsep{0pt}%
    \put(0,0){\includegraphics[width=\unitlength,page=1]{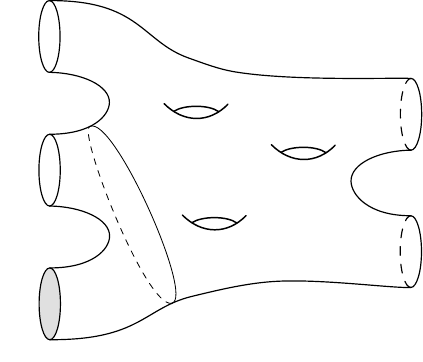}}%
    \put(0.50402688,0.80037375){\color[rgb]{0,0,0}\makebox(0,0)[t]{\lineheight{1.25}\smash{\begin{tabular}[t]{c}$S$\end{tabular}}}}%
    \put(0.04484565,0.13196148){\color[rgb]{0,0,0}\makebox(0,0)[t]{\lineheight{1.25}\smash{\begin{tabular}[t]{c}$\beta$\end{tabular}}}}%
    \put(0.4425125,0.44432582){\color[rgb]{0,0,0}\makebox(0,0)[t]{\lineheight{1.25}\smash{\begin{tabular}[t]{c}$S'$\end{tabular}}}}%
    \put(0.47350202,0.0547295){\color[rgb]{0,0,0}\makebox(0,0)[t]{\lineheight{1.25}\smash{\begin{tabular}[t]{c}$\widehat{S}$\end{tabular}}}}%
  \end{picture}%
\endgroup%

    \caption{Surfaces \(S'\) and \(\widehat{S}\) for \(S\).}
    \label{fig:splitting-subsurface}
\end{figure}

In what follows, \(S'\) and \(\widehat{S}\) will always denote the surfaces
defined above for a capped embedding \(S \hookrightarrow \Sigma_g^b\).

\subsection{Statement of the theorem}
Let \(b \in \{0, 1\}\). Consider a capped embedding \(i \colon S \hookrightarrow
\Sigma_g^b\), and define
\[
\C(S, \Sigma_g^b) = i_{*}^{-1} (\C_g^b)
,\]
where \(\C_g^b = \ker \sigma_{\Sigma_g^b}\) is the kernel of the
Birman--Craggs--Johnson homomorphism.

\begin{theorem} \label{thm:sigma-kernel}
    Let \(S \hookrightarrow \Sigma_g^b\) be a capped embedding, where \(b \in
    \{0, 1\}\). Then for \(g(S) \geq 3\), we have
    \[
    \C(S, \Sigma_g^b) = (\I(S))^{(2)}.
    \]
\end{theorem}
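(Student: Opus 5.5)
The inclusion \((\I(S))^{(2)} \subset \C(S, \Sigma_g^b)\) is immediate: \(\sigma\) takes values in a \(\Z/2\Z\)-vector space, so it kills every square. The content of the theorem is the reverse inclusion. I plan to prove it by induction on \(b(S)\), following the strategy of Putman's proof of Theorem~A in~\cite{putman-johnsonkernel}.

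\emph{Base case.} If \(b(S) = 1\) (or \(S \cong \Sigma_g^b\)), then \(S\) is a capped subsurface of genus \(g(S) \geq 3\) with one boundary component, so \(\I(S) \cong \I_{g(S)}^1\). The map \(\sigma_{\Sigma_g^b}\) restricted to \(\I(S)\) agrees with \(\sigma_{\Sigma^1_{g(S)}}\) composed with the natural map \(\B_3(S) \to \B_3'(\Sigma_g^b)\). This map is induced by the inclusion \(H_1(S;\Z/2\Z) \hookrightarrow H_1(\Sigma_g^b;\Z/2\Z)\) onto a symplectic summand, and it is injective on \(\B_3(S)\). The reason is that \(\Arf\) of the big surface involves variables from the complement, which has positive genus since the embedding is capped. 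Johnson's theorem \(\C_{g}^1 = (\I_g^1)^{(2)}\) for \(g \geq 3\) then gives the base case.

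\emph{Inductive step.} Suppose \(b(S) \geq 2\) and form \(S'\) and \(\widehat S\) as above. Capping \(\beta\) gives Putman's exact sequence (a Birman exact sequence for Torelli groups of the relevant type)
\[
1 \to \pi \to \I(S) \to \I(\widehat S) \to 1 ,
\]
where the kernel \(\pi\) is generated by point-pushing/boundary-pushing elements. Because \(\beta\) is a separate part of the partition, this kernel is generated by \emph{separating} twists, namely twists about curves cobounding \(\beta\) together with another boundary component, and bounding pairs. Here \(\widehat S\) is again capped, with fewer boundary components and the same genus, so induction applies to it.

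Take \(f \in \C(S)\). Its image \(\hat f \in \I(\widehat S)\) satisfies \(\sigma(\hat f) = \sigma(f) = 0\), because the BCJ values are computed in the same closed surface. Capping does not change the ambient surface, which is the reason for working with \(\Sigma_g^b\) fixed. By induction, \(\hat f \in \I(\widehat S)^{(2)}\). Lifting squares to squares, I can write \(f = s \cdot k\) with \(s \in \I(S)^{(2)}\) and \(k \in \pi \cap \C(S)\). It therefore remains to show \(\pi \cap \C(S) \subset \I(S)^{(2)} \cdot [\I(S),\I(S)]\); commutators are harmless because \([G,G] \subset G^{(2)}\).

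\emph{Main obstacle: the kernel \(\pi\).} I would use Lemmas~\ref{lem:sum-relation-1} and~\ref{lem:sum-relation-2} in \(H_1(\I(S))\), together with \(2[T_\gamma] = 0\) (Lemma~\ref{lem:abel-ord-2}, which I expect to establish first from those relations). With these, I reduce the image of \(\pi\) in \(H_1(\I(S);\Z/2\Z)\), modulo images of elements supported on \(S'\), to a span of classes \([T_\gamma]\) with \(\gamma\) genus-one separating curves. Elements supported on \(S'\) are handled by a second induction: \(S'\) has genus \(g(S)\) and, being capped in \(\Sigma_g^b\), has fewer boundary components. For the genus-one twists, their BCJ images \(\sigma(\bV)\) are distinct for distinct \(\bV\) by Proposition~\ref{prop:mod-2-sigma}, and they are linearly independent enough that \(\sigma(k) = 0\) forces the twist coefficients to cancel in pairs mod \(2\), hence \(k \in \I(S)^{(2)}[\I(S),\I(S)]\).

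The delicate part is exactly this spanning argument for \(\pi\) modulo \(\I(S)^{(2)}\). It means expressing each generator of \(\pi\) (bounding-pair-type elements about \(\beta\)) via pants relations, and checking that the genus hypothesis \(g(S) \geq 3\) leaves enough room to apply the lantern-type Lemma~\ref{lem:sum-relation-2}, which needs positive genus on the far side.
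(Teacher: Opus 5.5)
Your reduction of the inductive step to the inclusion \(\pi \cap \C(S) \subset (\I(S))^{(2)}\) is fine, up to a fix noted below. But that inclusion is the heart of the theorem, and the mechanism you propose for it fails at two points.

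First, the image of \(\pi\) in \(H_1(\I(S);\Z/2\Z)\) is \emph{not} spanned by separating-twist classes modulo the image of \(\I(S')\). Pushing \(\beta\) around a nonseparating loop carried by \(S'\) gives a bounding pair map. Its \(\sigma\)-value, computed in \(\B_3(\Sigma_h^1)\) with \(\Sigma_h^1\) as in the proof of Proposition~\ref{prop:semidirect}, contains cubic monomials \(\overline{\bx}\,\overline{\ba}\,\overline{\bb}\) with \(\bx \in H_1(S'';\Z/2\Z)\) and \(\ba, \bb\) coming from the positive-genus component of \(\Sigma_g^b \setminus S\) beyond \(\beta'\). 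Such terms arise neither from separating twists (values in \(\B_2\)) nor from \(\I(S')\) (values in \(\B_3(S'')\)).

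Second, the values \(\sigma(\bV)\) of genus-one twists are pairwise distinct (Proposition~\ref{prop:mod-2-sigma}) but far from linearly independent. For a symplectic basis \(\{\ba_i, \bb_i\}\), let \(\bx\) run over the nonzero vectors of \(\langle \ba_2, \bb_2 \rangle\) and of \(\langle \ba_3, \bb_3 \rangle\); the six distinct subspaces \(\langle \ba_1, \bb_1 + \bx \rangle\) have \(\sigma\)-values summing to \(0\). So \(\sigma(k) = 0\) does not force cancellation in pairs. Moreover, lifting a relation in \(\B_2'\) to a relation in \(H_1(\I(S);\Z/2\Z)\) is an injectivity statement of the same strength as what you are trying to prove.

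The missing ingredient is Johnson's explicit description of \(\sigma\) on the point-pushing subgroup (Proposition~\ref{prop:johnson-disk-pushing}): \(\sigma|_{\PP(\Sigma_g^1;\beta)}\) factors through \(\pi_1(U\Sigma_g) \to H_1(U\Sigma_g;\Z/2\Z)\). Pull this back along \(\pi \cong \pi_1(U\widehat{S}) \to \pi_1(U\Sigma_g)\). It shows that \(\pi \cap \C(S)\) lies in the subgroup generated by squares in \(\pi_1(U\widehat{S})\) together with the elements \(\overline{\gamma}_i z\). These correspond to \(T_{\gamma_i} T_{\gamma_i'}^{-1} T_{\beta}\) and vanish in \(H_1(\I(S))\) by Lemma~\ref{lem:sum-relation-2}. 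This is Lemma~\ref{lem:disk-pushing}, and it needs neither a genus-one reduction nor an induction on \(S'\).

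With that step supplied, your argument becomes a genuine alternative to the paper's. The paper splits \(\C(S) = \PP_{\C}(S;\beta) \rtimes \C(S')\) (Proposition~\ref{prop:semidirect}, which relies on Lemma~\ref{lem:arf-zero-intersec}) and inducts on \(S'\). You instead induct through the quotient \(\I(\widehat{S})\), which bypasses Lemma~\ref{lem:arf-zero-intersec}.

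You must, however, correct the justification of \(\sigma(\hat f) = 0\): capping \(\beta\) \emph{does} change the ambient surface. For \(b = 0\), \(\widehat{S}\) is capped not in \(\Sigma_g\) but in the closed surface \(\Sigma_h\), obtained from the subsurface \(\Sigma_h^1 \supset S\) bounded by \(\beta\) by gluing a disk to \(\beta\). One then argues through \(\Sigma_h^1\). The map \(\B_3(\Sigma_h^1) \to \B_3'(\Sigma_g)\) is injective because the complement has positive genus, as in your base case. Capping \(\partial \Sigma_h^1\) corresponds to \(\B_3 \to \B_3'\). For \(b = 1\) only this last step is needed.
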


\begin{remark}
    Although the group \(\I(S, \Sigma_g^b)\) does not depend on the choice of
    the capped embedding \(S \hookrightarrow \Sigma_g^b\), its subgroup
    \(\C(S, \Sigma_g^b) \subset \I(S, \Sigma_g^b)\) a priori depends
    on the choice of the embedding.
\end{remark}

\begin{corollary}
    For \(g(S) \geq 3\), the subgroup \(\C(S) = \C(S, \Sigma_g^b)\) does not
    depend on the choice of the capped embedding \(S \hookrightarrow
    \Sigma_g^b\).
\end{corollary}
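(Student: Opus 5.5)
The corollary follows directly from Theorem~\ref{thm:sigma-kernel}. The plan is to observe that the right-hand side \((\I(S))^{(2)}\) is defined intrinsically in terms of the group \(\I(S)\), and that this group does not depend on the capped embedding.

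Concretely, let \(i \colon S \hookrightarrow \Sigma_g^b\) and \(j \colon S \hookrightarrow \Sigma_{h}^{c}\) be two capped embeddings, with \(b, c \in \{0,1\}\). By the result of Putman~\cite{putman-cutting} recalled in the preliminaries, \(\I(S, \Sigma_g^b)\) depends only on the induced partition of \(\partial S\). For a capped embedding this partition is always the discrete partition \(\{\{\partial_1\}, \dots, \{\partial_m\}\}\). Hence \(\I(S, \Sigma_g^b) = \I(S, \Sigma_h^c)\) as subgroups of \(\Mod(S)\). I will be careful that both groups are regarded as subgroups of \(\Mod(S)\), namely as the preimages \(i_*^{-1}(\I_g^b)\) and \(j_*^{-1}(\I_h^c)\), so that the equality is literal rather than an abstract isomorphism.

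Since the subgroup generated by squares is determined by the group itself, it follows that
\[
(\I(S, \Sigma_g^b))^{(2)} = (\I(S, \Sigma_h^c))^{(2)}
\]
inside \(\Mod(S)\). Because \(g(S) \geq 3\), Theorem~\ref{thm:sigma-kernel} applies to both embeddings and gives
\[
\C(S, \Sigma_g^b) = (\I(S))^{(2)} = \C(S, \Sigma_h^c).
\]
This is exactly the claimed independence.

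There is no real obstacle here. The only point requiring care is the one above: the groups must be compared as subsets of \(\Mod(S)\), which is possible because \(\C(S,\Sigma_g^b)\) was defined as \(i_*^{-1}(\C_g^b) \subset \Mod(S)\).
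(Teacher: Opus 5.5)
Your proposal is correct and matches the paper's intended argument; the paper states this corollary without a separate proof, as an immediate consequence of Theorem~\ref{thm:sigma-kernel}. That theorem gives $\C(S,\Sigma_g^b)=(\I(S))^{(2)}$, and $\I(S)\subset\Mod(S)$ depends only on the discrete boundary partition by Putman's result, so comparing both groups as literal subgroups of $\Mod(S)$, as you do, is exactly the right bookkeeping.
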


The proof of Theorem~\ref{thm:sigma-kernel} is heavily inspired
by the proof of~\cite[Theorem~5.1]{putman-johnsonkernel}.
We will need the following auxiliary Proposition~\ref{prop:semidirect}
(cf.~\cite[Theorem~4.4]{putman-johnsonkernel}).

As shown by Putman
(see~\cite[Theorem~2.2]{putman-johnsonkernel}
and~\cite[Theorem~1.2]{putman-cutting}), we have 
the following Birman exact sequence:
\[
\begin{tikzcd}[column sep = 1.5em]
    1 \arrow[r] & \PP(S, \Sigma_g^b; \beta) \arrow[r] &\I(S, \Sigma_g^b)
    \arrow[r] & \I(\widehat{S}) \arrow[r] &1
,\end{tikzcd}
\]
where \(\PP(S, \Sigma_g^b; \beta) \cong \pi_1(U \widehat{S})\). Here \(U
\widehat{S}\) denotes the unit tangent bundle for \(\widehat{S}\). We then
define
\[
\PP_{\C}(S, \Sigma_g^b; \beta) = \PP(S, \Sigma_g^b; \beta) \cap \C(S,
\Sigma_g^b)
.\]

\begin{proposition} \label{prop:semidirect}
    Let \(S \hookrightarrow \Sigma_g^b\) be a capped embedding with \(b \in
    \{0, 1\}\) such that \(b(S) \geq 2\) and \(g(S) \geq 2\). Then
    \[
    \C(S, \Sigma_g^b) = \PP_{\C}(S, \Sigma_g^b; \beta) \rtimes \C(S',
    \Sigma_g^b)
    .\]
\end{proposition}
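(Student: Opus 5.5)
Restricting the Birman exact sequence for \(\I(S,\Sigma_g^b)\) to the subgroup \(\C(S,\Sigma_g^b)\) gives an exact sequence \(1 \to \PP_{\C}(S,\Sigma_g^b;\beta) \to \C(S,\Sigma_g^b) \to Q \to 1\), where \(Q\) is the image of \(\C(S)\) in \(\I(\widehat{S})\). The statement to prove therefore splits into three parts.
\begin{enumerate}
\item Find a subgroup of \(\I(S)\) that the Birman map sends isomorphically onto its image.
\item Show that this subgroup intersects \(\C(S)\) in exactly \(\C(S')\).
\item Show that \(\C(S')\) maps onto \(Q\).
\end{enumerate}
Once these hold, \(\PP_{\C}\) is normal in \(\C(S)\) as a kernel, \(\C(S')\) is a complement, and we get \(\C(S)=\PP_{\C}\rtimes \C(S')\).

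For the splitting, use the inclusion \(S' \hookrightarrow S\). The composite \(S' \hookrightarrow S \hookrightarrow \Sigma_g^b\) is again capped. The pair of pants \(S\setminus \Int S'\) together with the complementary caps gives, for the boundary components of \(S'\) other than the new curve, the same capping as before. The new boundary curve \(\partial\) bounds the pants glued to the cap at \(\beta\) (a disk, an annulus, or a positive genus surface), so its complement is a positive genus surface with one boundary. This gives \(\I(S')\subset \I(S)\). The composite \(\I(S') \to \I(S) \to \I(\widehat{S})\) is the map induced by the embedding \(S' \hookrightarrow \widehat{S}\), whose complement is an annulus. This is the standard splitting in Putman's Birman exact sequence (cf.~\cite[Theorem~4.4]{putman-johnsonkernel}), so \(\I(S)=\PP(S;\beta)\rtimes \I(S')\). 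Since \(\C(S')=\I(S')\cap \C(S)\) by definition (both are preimages of \(\C_g^b\) under compatible maps), the intersection of \(\C(S)\) with the complement is exactly \(\C(S')\). This settles parts (1) and (2).

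The remaining point, part (3), is the main obstacle. We must show that every \(f\in \C(S)\) can be written as \(f = p\cdot f'\) with \(p\in\PP(S;\beta)\cap \C(S)\) and \(f'\in \C(S')\). Write \(f = p f'\) with \(p\in\PP\) and \(f'\in\I(S')\) using the semidirect decomposition of \(\I(S)\). Then \(\sigma(f)=\sigma(p)+\sigma(f')=0\) in the target of \(\sigma_{\Sigma_g^b}\), and we want to modify the decomposition so that \(\sigma(p)=0\). It suffices to show that \(\sigma(\PP(S;\beta)) \supseteq \sigma(\I(S'))\cap \sigma(\PP(S;\beta))\); better, it suffices to show that whenever \(\sigma(p)\in\sigma(\I(S'))\), there is \(f''\in\I(S')\) with \(\sigma(f'')=\sigma(p)\) and \(p' = p f''\) satisfying the required properties. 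Cleaner still is the following route.

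\begin{itemize}
\item Compute \(\sigma(\PP(S;\beta))\) explicitly. The point-pushing and boundary-twist generators are products \(T_{\gamma}T_{\gamma'}^{-1}\) of twists about curves cobounding an annulus with a boundary component, or twists about separating curves surrounding \(\beta\). By~\eqref{eq:BCJ-sep} and Johnson's formula for bounding pair maps, their \(\sigma\)-values are \(\bar{\bx}\cdot(\text{quadratic})\) type terms supported on \(H_1(\widehat{S})\).
\item Show that \(\sigma(\I(S'))\) already contains \(\sigma(\PP(S;\beta))\). This should hold because \(g(S')=g(S)\geq 2\), so every such polynomial is realized by a bounding pair or separating twist inside \(S'\) with the same homological data. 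This is where \(g(S)\geq 2\) is used, via Proposition~\ref{prop:mod-2-sigma}-type control of the values.
\end{itemize}

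Granting this, take \(f''\in \I(S')\) with \(\sigma(f'')=\sigma(p)\). Then \(p\) maps to the same element of \(\I(\widehat S)\) as \((f'')^{-1}\)-corrected elements, and rewriting \(f = (p\,\cdot\, \text{correction})\cdot(\text{element of } \C(S'))\) finishes the argument. If realizing the values inside \(S'\) fails for some generators, I would fall back to proving surjectivity of \(\C(S')\to Q\) directly by generators of \(\C(\widehat S)\)-type elements (squares and commutators lifted into \(S'\)).
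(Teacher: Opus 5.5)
Your parts (1) and (2) agree with the paper, which also starts from Putman's splitting \(\I(S,\Sigma_g^b)=\PP(S,\Sigma_g^b;\beta)\rtimes\I(S',\Sigma_g^b)\). The gap is in part (3), and it is a real one.

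Decompositions in a semidirect product are unique, so there is nothing to ``modify''. An element \(f=pf'\in\C(S,\Sigma_g^b)\) lies in \(\PP_{\C}\cdot\C(S')\) if and only if its own component satisfies \(\sigma(p)=0\). Since \(\sigma(p)=\sigma(f')\) (the target has characteristic \(2\)), this value lies in \(\sigma(\PP(S;\beta))\cap\sigma(\I(S'))\). Hence the proposition is \emph{equivalent} to \(\sigma(\PP(S;\beta))\cap\sigma(\I(S'))=0\); this is exactly Lemma~\ref{lem:ker-semidirect}. Your fallback (surjectivity of \(\C(S')\to Q\)) is equivalent to the same statement, so it does not avoid the issue.

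Your proposed key claim \(\sigma(\PP(S;\beta))\subseteq\sigma(\I(S'))\) points the wrong way. We have \(\sigma(\PP(S;\beta))\neq 0\), since it contains \(\sigma(T_\beta)\), an Arf invariant. So your claim would give \(p\in\PP\) and \(f''\in\I(S')\) with \(\sigma(p)=\sigma(f'')\neq 0\). Then \(p(f'')^{-1}\in\C(S)\), but it would not lie in \(\PP_{\C}\rtimes\C(S')\). If your claim held, the proposition would be false.

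In fact the claim is false, and seeing why gives the missing idea. The values of \(\sigma\) on \(\PP\) are not ``supported on \(H_1(\widehat S)\)''. By \cite[Lemma~16]{johnson3}, \(\sigma(\PP(S;\beta))\subset\B_1(\Sigma_h^1)\Arf(\Sigma_h^1)\). Here \(\Sigma_h^1\) is the surface cut off by \(\beta\) that contains \(S\) together with the caps on its other boundary components. So these values see the homology of those caps; for example \(\sigma(T_\beta)=\Arf(\Sigma_h^1)\).

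On the other hand, every element of \(\I(S')\) is supported in the one-boundary subsurface \(S''\) bounded by the boundary curve of \(S'\) lying on the pair of pants, so \(\sigma(\I(S'))\subset\B_3(S'')\). The complement \(\Sigma_h^1\setminus S''\) contains the positive-genus cap over the third boundary curve \(\beta'\) of the pants. An elementary Boolean-polynomial computation (Lemma~\ref{lem:arf-zero-intersec}) then gives \(\B_1(\Sigma_h^1)\Arf(\Sigma_h^1)\cap\B_3(S'')=0\). That disjointness of the two images is what proves the proposition, not realizability of point-pushing values inside \(S'\).
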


The proof is based on the following observation.

\begin{lemma} \label{lem:ker-semidirect}
    Consider a group \(G = K \rtimes Q\).
    Then, for a homomorphism \(f \colon G \to H\), we have
    \[
    \ker f = \ker (f|_{K}) \rtimes \ker (f|_{Q})
    \]
    if and only if \(f(K) \cap f(Q) = 1\).
\end{lemma}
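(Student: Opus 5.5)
We need to prove: for \(G = K \rtimes Q\) (so \(K\) normal, \(G = KQ\), \(K \cap Q = 1\)) and a homomorphism \(f \colon G \to H\), we have \(\ker f = \ker(f|_K) \rtimes \ker(f|_Q)\) if and only if \(f(K) \cap f(Q) = 1\). The plan is a direct element-wise argument, with no input from earlier results.

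First note that \(\ker(f|_K) = K \cap \ker f\) is normal in \(G\), being the intersection of two normal subgroups. Also \(\ker(f|_Q) = Q \cap \ker f\) meets \(K\) trivially. So the product \(N = \ker(f|_K)\,\ker(f|_Q)\) is a subgroup, it is an internal semidirect product \(\ker(f|_K) \rtimes \ker(f|_Q)\), and it is contained in \(\ker f\). The statement therefore reduces to showing that the inclusion \(N \subseteq \ker f\) is an equality if and only if \(f(K) \cap f(Q) = 1\).

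For the direction "\(\Leftarrow\)", assume \(f(K) \cap f(Q) = 1\). Write an arbitrary \(x \in \ker f\) uniquely as \(x = kq\) with \(k \in K\) and \(q \in Q\). Then \(f(k) = f(q)^{-1}\) lies in \(f(K) \cap f(Q) = 1\). Hence \(k \in \ker(f|_K)\) and \(q \in \ker(f|_Q)\), so \(x \in N\).

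For the direction "\(\Rightarrow\)", assume \(\ker f = N\) and take \(h \in f(K) \cap f(Q)\), say \(h = f(k) = f(q)\) with \(k \in K\) and \(q \in Q\). Then \(kq^{-1} \in \ker f = N\), so \(kq^{-1} = k'q'\) with \(k' \in \ker(f|_K)\) and \(q' \in \ker(f|_Q)\). By uniqueness of the decomposition in \(G = KQ\), we get \(k = k'\) and \(q^{-1} = q'\). Therefore \(h = f(k) = f(k') = 1\).

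There is no real obstacle here. The only point needing care is that \(\ker(f|_K)\) is normal in \(G\), not merely in \(K\), which is what makes the product an honest semidirect product. This holds because it equals \(K \cap \ker f\).
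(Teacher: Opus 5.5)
Your argument is correct and complete. The paper gives no proof of this lemma and simply states it as an elementary observation, so your element-wise verification supplies exactly what the paper omits. Only the ``if'' direction, \(f(K) \cap f(Q) = 1 \Rightarrow \ker f = \ker(f|_K) \rtimes \ker(f|_Q)\), is actually used later, in the proof of Proposition~\ref{prop:semidirect}.
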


We will also need the following elementary linear-algebraic lemma.

\begin{lemma} \label{lem:arf-zero-intersec}
    Let \(S\) be subsurface of \(\Sigma_g^1\) such that 
    \begin{itemize}
        \item \(\Sigma_g^1 \setminus S\) has positive genus; and
        \item \(S\) has one boundary component.
    \end{itemize}
    Then in \(\B_3(\Sigma_g^1)\) we have
    \[
    \B_1 (\Sigma_g^1) \Arf(\Sigma_g^1) \cap \B_3(S) = 0
    .\] 
\end{lemma}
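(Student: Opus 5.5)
We have to prove: for a one-boundary subsurface \(S \subset \Sigma_g^1\) whose complement \(\Sigma_g^1 \setminus S\) has positive genus, \(\B_1(\Sigma_g^1)\Arf(\Sigma_g^1) \cap \B_3(S) = 0\) inside \(\B_3(\Sigma_g^1)\). The plan is to choose an adapted symplectic basis and compare monomials in the Boolean polynomial algebra.

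\textbf{Setting up coordinates.} Since \(S\) has one boundary component, \(H_1(\Sigma_g^1;\Z/2\Z) = \bV \oplus \bV^{\perp}\), where \(\bV = H_1(S;\Z/2\Z)\) is a symplectic subspace of dimension \(2h\), \(h = g(S)\). The complement has genus \(g - h \geq 1\), so \(\bV^{\perp} \neq 0\). Choose a symplectic basis \(\ba_1,\bb_1,\dots,\ba_g,\bb_g\) of \(H_1(\Sigma_g^1;\Z/2\Z)\) with the first \(h\) pairs spanning \(\bV\). Then \(\B(\Sigma_g^1)\) is the Boolean polynomial algebra in the variables \(\overline{\ba}_j,\overline{\bb}_j\), and it has a basis of squarefree monomials. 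Under this identification:
\begin{itemize}
\item \(\B_3(S)\) is the span of monomials of degree \(\leq 3\) in the variables with indices \(j \leq h\);
\item \(\Arf = \sum_{j=1}^g \overline{\ba}_j\overline{\bb}_j\).
\end{itemize}
The second point uses that \(\Arf\) is independent of the symplectic basis, so we may use this adapted one.

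\textbf{Reducing to a monomial check.} An element of \(\B_1\Arf\) has the form \(\ell\cdot\Arf\), where \(\ell = c + \sum_i \lambda_i \overline{\be}_i\) is affine-linear in the basis variables. Suppose \(\ell\cdot\Arf \in \B_3(S)\). Then every monomial appearing in \(\ell\cdot\Arf\) must involve only indices \(\leq h\). Fix an index \(m > h\), which exists because \(\Sigma_g^1\setminus S\) has positive genus. We extract the coefficients of monomials containing the pair \(\overline{\ba}_m\overline{\bb}_m\).

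\textbf{Case analysis on \(\ell\).}
\begin{itemize}
\item \emph{Constant term.} If \(c = 1\), then \(\overline{\ba}_m\overline{\bb}_m\) appears with coefficient \(1\). The only possible cancellation would come from \(\lambda_i\overline{\be}_i\overline{\ba}_j\overline{\bb}_j\) reducing to \(\overline{\ba}_m\overline{\bb}_m\) via \(x^2 = x\). That requires \(\be_i \in \{\ba_m,\bb_m\}\) with \(j = m\). If \(\lambda_{\ba_m} = 1\), the product \(\overline{\ba}_m\cdot\overline{\ba}_m\overline{\bb}_m = \overline{\ba}_m\overline{\bb}_m\) could cancel it. So this case does not close on its own and must be handled together with the next one.
\item \emph{Cubic monomials.} Consider the monomials \(\overline{\be}_i\overline{\ba}_m\overline{\bb}_m\) with \(\be_i \notin\{\ba_m,\bb_m\}\). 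They arise only from \(\lambda_i\overline{\be}_i\cdot\overline{\ba}_m\overline{\bb}_m\) and from nothing else: the products \(\overline{\be}_i\overline{\ba}_j\overline{\bb}_j\) are distinct squarefree cubics for \(j \neq m\) or for \(\be_i\) outside the \(j\)-th pair. Hence \(\lambda_i = 0\) for all such \(i\).
\item \emph{The remaining coefficients.} Now \(\ell = c + \lambda\overline{\ba}_m + \mu\overline{\bb}_m\). If \(g \geq 2\), pick any \(j \neq m\). If \(\lambda = 1\), the cubic \(\overline{\ba}_m\overline{\ba}_j\overline{\bb}_j\) appears with coefficient \(1\), is uncancelled, and involves the index \(m > h\). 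The same argument with \(\overline{\bb}_m\) forces \(\mu = 0\). Then \(\ell = c\), and \(c = 1\) would leave \(\overline{\ba}_m\overline{\bb}_m\) in the product. So \(\ell = 0\).
\item \emph{The case \(g = 1\).} Here \(h = 0\), so \(\B_3(S) = \Z/2\Z\) consists of constants. Direct computation gives \((c+\lambda\overline{\ba}+\mu\overline{\bb})\overline{\ba}\,\overline{\bb} = (c+\lambda+\mu)\overline{\ba}\,\overline{\bb}\). This is either zero or nonconstant, so the intersection is again \(0\).
\end{itemize}

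\textbf{Main obstacle.} The only delicate point is the bookkeeping with the idempotent relation \(\overline{\bx}^2 = \overline{\bx}\), which lets the products \(\overline{\ba}_m\cdot\overline{\ba}_m\overline{\bb}_m\) collapse to degree two. This is why the cancellation of the constant term must be handled jointly with the linear coefficients \(\lambda,\mu\) rather than separately.

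Note that the conclusion is that the intersection is zero as a set of elements \(\ell\cdot\Arf\), not that \(\ell\) itself vanishes. This matches the statement, since we only need \(\ell\cdot\Arf = 0\) whenever it lies in \(\B_3(S)\).
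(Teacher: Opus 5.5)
Your proof is correct and follows essentially the paper's approach. Both arguments expand \(\ell\cdot\Arf\) in squarefree monomials with respect to a symplectic basis adapted to \(H_1(S;\Z/2\Z)\oplus H_1(S;\Z/2\Z)^{\perp}\), and use the monomials containing a full complementary pair \(\overline{\ba}_m\overline{\bb}_m\) to force every coefficient to vanish.

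The differences are only in bookkeeping. The paper writes \(\ell=c+\overline{\bx}\), changes the symplectic basis so that \(\bx\) has at most one nonzero coordinate in each summand, and then splits according to whether the complement has genus \(1\) or at least \(2\). You skip that normalization and kill all linear coefficients at once through the cubics \(\overline{\be}_i\overline{\ba}_m\overline{\bb}_m\) and \(\overline{\ba}_m\overline{\ba}_j\overline{\bb}_j\), which also shows \(\ell=0\) whenever \(g\geq 2\).
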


\begin{proof}
    We will show that no non-zero element \(f \in \B_1(\Sigma_g^1)
    \Arf(\Sigma_g^1)\) belongs to \(\B_3(S)\). Let \(f = (c + \overline{\bx})
    \Arf(\Sigma_g^1)\), where \(c \in \{0, 1\}\) and \(\bx \in H_1(\Sigma_g^1;
    \Z/2\Z)\). If \(f \in \B_3(S)\), we must show that \(f = 0\).

    Consider the splitting \(H_1(\Sigma_g^1; \Z/2\Z) = H_1(S; \Z/2\Z) \oplus
    H_1(S; \Z/2\Z)^{\perp}\), and let \(\bx = \bx_0 + \bx_1\) be the
    corresponding decomposition of \(\bx\). Let \(\{\ba_1,
    \bb_1, \dots, \ba_g, \bb_g\}\) be a symplectic basis for \(H_1(\Sigma_g^1; \Z/2\Z)\) such that:
    \begin{itemize}
        \item \(\{\ba_j, \bb_j, \dots, \ba_g, \bb_g\}\) is a symplectic
        basis for \(H_1(S; \Z/2\Z)\) (for some \(j \geq 2\));
        \item \(\bx_0 \in \{0, \ba_1\}\) and \(\bx_1 \in \{0, \ba_j\}\).
    \end{itemize}

    Assume that \(f \in \B_3(S)\), which implies that
    \[
    c \Arf(S^{\perp}) + \epsilon_0 \overline{\ba}_1 \Arf(\Sigma_g^1) + \epsilon_1 \overline{\ba}_j \Arf(\Sigma_g^1) \in \B_3(S)
    ,\]
    where \(\epsilon_0, \epsilon_1 \in \{0,1\}\) and
    \[
    \Arf(S^{\perp}) = \Arf(\Sigma_g^1) - \Arf(S) = \sum_{i=1}^{j-1} \overline{\ba}_i \overline{\bb}_i
    .\]
    If \(j > 2\), then linear independence requires \(c = 0\), and therefore
    \(\epsilon_0 = 0\). However, \(\epsilon_1 \overline{\ba}_j
    \Arf(\Sigma_g^1) \in \B_3(S)\) is impossible for \(\epsilon_1 \neq 0\),
    which forces \(f = 0\).

    For \(j = 2\), the terms containing \(\overline{\ba}_1\) and \(\overline{\bb}_1\) are
    \[
    (c + \epsilon_0) \overline{\ba}_1 \overline{\bb}_1 + \epsilon_1 \overline{\ba}_j \overline{\ba}_1 \overline{\bb}_1
    .\]
    This forces \(c = \epsilon_0\) and \(\epsilon_1 = 0\). If \(c = \epsilon_0
    = 0\), then \(f = 0\), a contradiction. If instead \(c = \epsilon_0 = 1\),
    we would then have \((1 + \overline{\ba}_1) \Arf(S^{\perp}) +
    \overline{\ba}_1 \Arf(S) \in \B_3(S)\), meaning \(\overline{\ba}_1 \Arf(S)
    \in \B_3(S)\), which is impossible since \(\ba_1 \notin H_1(S; \Z/2\Z)\).
    Therefore, if \(f \in \B_3(S)\), then \(f = 0\), and the lemma follows.
\end{proof}

\begin{proof}[Proof of Proposition~\ref{prop:semidirect}]
    By a theorem of Putman~\cite[Theorem~1.2]{putman-cutting} (see also the
    reformulation in \cite[Theorem~2.3]{putman-johnsonkernel}), we have the
    decomposition:
    \[
    \I(S, \Sigma_g^b) = \PP(S, \Sigma_g^b; \beta) \rtimes \I(S',
    \Sigma_g^b)
    .\]
    By Lemma~\ref{lem:ker-semidirect}, to prove the proposition,
    it suffices to show that
    \[
    \sigma(\PP(S, \Sigma_g^b; \beta)) \cap \sigma(\I(S', \Sigma_g^b)) = 0
    .\]
    Johnson showed~\cite[Lemma~16]{johnson3} that \(\sigma(\PP(\Sigma_g^1;
    \beta)) = \B_1(\Sigma_g^1) \Arf(\Sigma_g^1)\). Thus,
    \[
    \sigma(\PP(S, \Sigma_g^b; \beta)) = \sigma(\PP(S; \beta)) \subset
    \sigma(\PP(\Sigma_h^1; \beta)) = \B_1(\Sigma_{h}^1) \Arf(\Sigma_h^1)
    ,\]
    where \(\Sigma_h^1\) is the subsurface of \(\Sigma_g^b\) defined as follows:
    \begin{itemize}
        \item if \(b = 1\), then  \(\Sigma_h^1 = \Sigma_g^1\);
        \item if \(b = 0\), then \(\Sigma_h^1\) is a subsurface of
        \(\Sigma_g\) bounded by \(\beta\) and containing \(S\).
    \end{itemize}

    Consider the subsurface \(S''\) in \(\Sigma_g^b\) defined as follows. Let
    \(\beta, \beta', \beta''\) denote the boundary components of \(S \setminus \Int S'\),
    where \(\beta'' \subset \partial S'\). We denote by \(S''\) the
    subsurface in \(\Sigma_g^b\) bounded by the curve \(\beta''\) that does not
    contain \(\beta\) and \(\beta'\)
    (see~Fig.~\ref{fig:s-double-prime-surface}).

    \begin{figure}[H]
        \centering
    \def\svgwidth{.5\columnwidth}%
    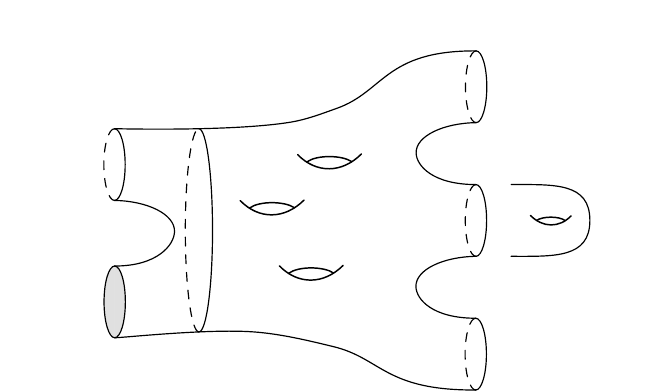%

        \caption{Subsurface \(S''\) in \(\Sigma_g^b\).}
        \label{fig:s-double-prime-surface}
    \end{figure}

    We have \(\I(S', \Sigma_g^b) = \I(S', S'')\), which means that
    \[
    \sigma(\I(S', \Sigma_g^b)) = \sigma(\I(S', S'')) \subset \B_3(S'')
    .\]

    Thus, it suffices to show that in \(\B_3(\Sigma_h^1)\) we have
    \[
    \B_1 (\Sigma_h^1) \Arf(\Sigma_h^1) \cap \B_3(S'') = 0
    .\]
    The proposition now follows from Lemma~\ref{lem:arf-zero-intersec} applied
    to \(S'' \subset \Sigma_h^1\).
\end{proof}

\begin{lemma} \label{lem:disk-pushing}
    Let \(S \hookrightarrow \Sigma_g^b\) be a capped embedding, where \(b \in
    \{0, 1\}\). Let \(\beta\) denote the boundary component of \(\partial S\)
    that is isotopic to \(\partial \Sigma_g^1\) when \(b = 1\). Then for
    \(g(S) \geq 3\) and \(b(S) \geq 2\), we have
    \[
    \PP_{\C}(S, \Sigma_g^b; \beta) \subset (\I(S))^{(2)}
    .\]
\end{lemma}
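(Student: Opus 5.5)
We will follow Putman's approach to the analogous statement for the Johnson kernel (cf.~\cite[Theorem~4.4]{putman-johnsonkernel}), working in the disk-pushing subgroup \(\PP = \PP(S, \Sigma_g^b; \beta) \cong \pi_1(U\widehat{S})\). The plan is to identify \(\PP_{\C}\) explicitly as a subgroup of \(\pi_1(U\widehat{S})\), produce a generating set for it, and check that each generator lies in \((\I(S))^{(2)}\).

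First we determine \(\sigma\) on \(\PP\). By Johnson~\cite[Lemma~16]{johnson3} (as used in the proof of Proposition~\ref{prop:semidirect}), \(\sigma(\PP) \subset \B_1(\Sigma_h^1)\Arf(\Sigma_h^1)\). More precisely, the composition \(\PP \to \B_1\Arf\) factors as follows. The fiber generator, which is the twist about \(\beta\) (or its image), maps to \(\Arf\). A lift of a loop \(\gamma \in \pi_1(\widehat{S})\) maps to \(\overline{[\gamma]}\,\Arf\) plus a multiple of \(\Arf\) determined by the winding number mod \(2\). Consequently \(\sigma|_{\PP}\) factors through \(H_1(U\widehat{S}; \Z/2\Z)\) via an injective-on-image affine map, and
\[
\PP_{\C} = \ker\bigl(\pi_1(U\widehat{S}) \to H_1(U\widehat{S};\Z/2\Z)\bigr)
\]
modulo checking that the induced map on \(H_1(\,\cdot\,;\Z/2\Z)\) is injective. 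This uses \(g(S) \geq 3\), and in particular that \(H_1(\widehat{S};\Z/2\Z) \to H_1(\Sigma_h^1;\Z/2\Z)\) is injective because the embedding is capped. Hence \(\PP_{\C}\) is generated by squares and commutators of elements of \(\PP\).

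Squares of elements of \(\PP\) lie in \((\I(S))^{(2)}\) trivially. Since \([x,y] \equiv x^2 y^2 (y^{-1}x^{-1})^2\) modulo squares, commutators also lie in \((\I(S))^{(2)}\): indeed \([x,y] = x^2 (x^{-1}y)^2 y^{-2}\) up to rewriting, because any group \(G\) has \(G/G^{(2)}\) abelian of exponent \(2\). So in fact the kernel of \(\PP \to H_1(\PP;\Z/2\Z)\) is exactly \(\PP^{(2)} \subset (\I(S))^{(2)}\). This reduces the lemma to showing that \(\sigma|_{\PP}\) induces an injection \(H_1(\PP;\Z/2\Z) \to \B_3\).

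The main obstacle is this injectivity computation. Here \(H_1(\pi_1(U\widehat{S});\Z/2\Z) \cong H_1(\widehat{S};\Z/2\Z) \oplus \Z/2\Z\), with the fiber class surviving mod \(2\) since \(\chi(\widehat{S})\) is even when \(\widehat{S}\) has \(b(S)-1\) boundary components; if \(\widehat{S}\) has boundary, this is automatic. We would verify it directly on generators. Take push maps along simple nonseparating loops \(\gamma\), realized as \(T_{\gamma_1}T_{\gamma_2}^{-1}\) bounding pairs, together with the boundary twist \(T_\beta\). Using Johnson's formulas, \(\sigma\) sends the bounding pair map to \(\overline{\bx}\,\Arf(\text{side})\) and \(T_\beta\) to \(\Arf(\Sigma_h^1)\) or the genus-of-side expression. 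We then check that the images of a basis \(\{\gamma_i\} \cup \{\beta\}\) are linearly independent in \(\B_3(\Sigma_h^1)\): the classes \(\overline{\bx}_i\) are independent degree-one factors multiplying a common nonzero quadratic \(\Arf\), and the constant term separates the fiber. If \(\widehat{S}\) is closed relative to \(\Sigma_h^1\), the fiber instead has order issues, and we would handle this via Lemma~\ref{lem:abel-ord-2}-type relations (\(2[T_\gamma]=0\)), using Lemmas~\ref{lem:sum-relation-1} and~\ref{lem:sum-relation-2}.
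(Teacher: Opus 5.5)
There is a genuine gap at the step where you reduce the lemma to injectivity of \(H_1(\PP;\Z/2\Z)\to\B_3\), i.e.\ to \(\PP_{\C}=\PP^{(2)}\). You justify it by saying that \(H_1(\widehat{S};\Z/2\Z)\to H_1(\Sigma_h^1;\Z/2\Z)\) is injective ``because the embedding is capped.'' Cappedness gives the opposite. Every component \(\gamma_j\) of \(\partial S\) other than \(\beta\) bounds a cap \(R_j\) of positive genus, so \([\gamma_j]=0\) in \(H_1(\Sigma_h^1;\Z/2\Z)\).

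When \(b(S)=2\) this is harmless: \(\widehat{S}\) then has one boundary component, \([\gamma_1]=0\) already in \(\widehat{S}\), and your argument essentially goes through. But for \(b(S)\geq 3\), \(\widehat{S}\) has \(n=b(S)-1\geq 2\) boundary components, and each \([\gamma_j]\) is nonzero in \(H_1(\widehat{S};\Z/2\Z)\). Concretely, let \(\gamma_j'\) be the curve cutting off a pair of pants with \(\beta\) and \(\gamma_j\). The element \(T_{\gamma_j}T_{\gamma_j'}^{-1}T_\beta\in\PP\) (the push of \(\beta\) around \(\gamma_j\), times the fiber) has \(\sigma\)-value \(\Arf(R_j)+\bigl(\Arf(\Sigma_h^1)+\Arf(R_j)\bigr)+\Arf(\Sigma_h^1)=0\), so it lies in \(\PP_{\C}\). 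Yet its image in \(H_1(\PP;\Z/2\Z)\cong H_1(U\widehat{S};\Z/2\Z)\) projects to \([\gamma_j]\neq 0\) in \(H_1(\widehat{S};\Z/2\Z)\), so it is not in \(\PP^{(2)}\). Thus the map you want to prove injective is not injective. Your linear-independence check fails exactly on these boundary loops, whose \(\overline{\bx}\) is \(0\). Your closing remark about ``order issues'' when \(\widehat{S}\) is closed does not touch this, since \(\widehat{S}\) always has boundary here.

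The missing idea is that these extra kernel elements cannot be handled inside \(\PP\); you must show they lie in \((\I(S))^{(2)}\) using the larger group. The paper first notes that \(\PP_{\C}\) is the kernel of \(\pi_1(U\widehat{S})\to H_1(U\widehat{S})\to H_1(U\Sigma_h;\Z/2\Z)\), where \(\Sigma_h\) is obtained by capping \(\beta\); this uses Johnson's description of \(\sigma\) on disk-pushing maps, Proposition~\ref{prop:johnson-disk-pushing}. That kernel is generated by \(\PP^{(2)}\) together with the \(n\) elements \(T_{\gamma_j}T_{\gamma_j'}^{-1}T_\beta\). The paper then applies Lemma~\ref{lem:sum-relation-2} to the pants bounded by \(\gamma_j,\beta,\gamma_j'\), giving \([T_{\gamma_j}]+[T_\beta]=[T_{\gamma_j'}]\) in \(H_1(\I(S))\). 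Hence \(T_{\gamma_j}T_{\gamma_j'}^{-1}T_\beta\in[\I(S),\I(S)]\subset(\I(S))^{(2)}\). This is where \(g(S)\geq 3\) actually enters, not in any injectivity statement.
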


Before proving Lemma~\ref{lem:disk-pushing}, let us consider the following
case. Consider the embedding \(\Sigma_g^1 \hookrightarrow \Sigma_g\) obtained
by gluing a two-dimensional disk to the boundary component \(\beta = \partial \Sigma_g^1\). We then have the Birman exact
sequence:
\[
\begin{tikzcd}[column sep = 1.5em]
    1 \arrow[r] & \PP(\Sigma_g^1; \beta) \arrow[r] &\I_g^1 \arrow[r] &\I_g
    \arrow[r] &1
.\end{tikzcd}
\]
Here, \(\PP(\Sigma_g^1; \beta) \cong \pi_1(U \Sigma_g)\).
Johnson showed~\cite[Lemma~16]{johnson3} that \(\sigma (
\PP(\Sigma_g^1; \beta) ) = \B_1 \Arf(\Sigma_g^1)\) for the
Birman--Craggs--Johnson homomorphism \(\sigma \colon \I_g^{1} \to \B_3\).
In addition, we
have the following proposition.

\begin{proposition} \label{prop:johnson-disk-pushing}
The following diagram commutes:
\[
\begin{tikzcd}[column sep = 2em]
    \PP(\Sigma_g^1; \beta) \arrow[d, "\cong"'] \arrow[rr, "\sigma",
    twoheadrightarrow] & &[1.5em] \B_1 \Arf(\Sigma_g^1) \\
    \pi_1(U \Sigma_g) \arrow[r, twoheadrightarrow] &H_1(U \Sigma_g) \arrow[r,
    twoheadrightarrow, "\mathrm{mod}\, 2"] & H_1(\Sigma_g; \Z/2\Z)
    \oplus (\Z/2\Z) \arrow[u, "\cong"']
\end{tikzcd}
\]
Here, the homomorphism \(H_1(\Sigma_g; \Z/2\Z) \oplus (\Z/2\Z) \to \B_1
\Arf(\Sigma_g^1)\) is given by
\begin{align*}
    \ba_i &\mapsto (\overline{\ba}_i + 1) \Arf(\Sigma_g^1) \\
    \bb_i &\mapsto (\overline{\bb}_i + 1) \Arf(\Sigma_g^1) \\
    \bz &\mapsto \Arf(\Sigma_g^1)
,\end{align*}
where \(\{\ba_1, \bb_1, \dots, \ba_g, \bb_g\}\) is a symplectic basis for
\(H_1(\Sigma_g; \Z/2\Z)\), and \(\bz\) is the homology class of the fiber of
the unit tangent bundle \(U \Sigma_g \to \Sigma_g\).
\end{proposition}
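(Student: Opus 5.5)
The plan is to compare two homomorphisms \(\pi_1(U\Sigma_g) \to \B_3(\Sigma_g^1)\) on a generating set. The first is \(\sigma\) composed with the Birman isomorphism \(\pi_1(U\Sigma_g) \cong \PP(\Sigma_g^1;\beta)\). The second is the composite along the bottom row followed by the stated map to \(\B_1\Arf(\Sigma_g^1)\). The target \(\B_3\) is an elementary abelian \(2\)-group, so \(\sigma|_{\PP}\) factors through \(H_1(U\Sigma_g;\Z/2\Z)\). Since \(\chi(\Sigma_g)=2-2g\) is even, the Euler class vanishes mod \(2\), so \(H_1(U\Sigma_g;\Z/2\Z) \cong H_1(\Sigma_g;\Z/2\Z)\oplus \Z/2\Z\), with the second summand generated by the fiber class \(\bz\). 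Surjectivity and the isomorphism on the right then follow from Johnson's computation \(\sigma(\PP(\Sigma_g^1;\beta)) = \B_1\Arf(\Sigma_g^1)\) by counting dimensions: both spaces have dimension \(2g+1\), once we know \(\Arf\) and the \((\overline{\bx}+1)\Arf\) are linearly independent in \(\B_3\). Thus it suffices to check commutativity on \(\bz\) and on lifts of a basis of \(H_1(\Sigma_g;\Z/2\Z)\).

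\emph{The fiber.} Under the Birman isomorphism the fiber loop corresponds to the boundary twist \(T_\beta\), which is a separating twist in \(\I_g^1\). Its subsurface not containing \(\partial\Sigma_g^1\) is all of the closed-up surface. By the boundary version of Johnson's formula~\eqref{eq:BCJ-sep} (\cite[Lemma~12a]{johnson-bcj}, where in \(\B_3\) one must take the side not containing the boundary), \(\sigma(T_\beta)=\sum_{i=1}^g \overline{\ba}_i\overline{\bb}_i = \Arf(\Sigma_g^1)\). This matches \(\bz \mapsto \Arf(\Sigma_g^1)\).

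\emph{The basis curves.} Let \(\alpha\) be a nonseparating simple closed curve on \(\Sigma_g\) representing, say, \(\ba_1\), and take its tangent lift to \(U\Sigma_g\). The corresponding point-pushing map is the bounding pair map \(T_{\alpha_1}T_{\alpha_2}^{-1}\), where \(\alpha_1,\alpha_2\) are the boundary curves of a regular neighbourhood of \(\alpha\cup\beta\) other than \(\beta\). Johnson's formula for \(\sigma\) of a genus-\(h\) bounding pair map applies here. In \(\Sigma_g^1\), the pair \(\alpha_1,\alpha_2\) cobounds a subsurface of genus \(g-1\) not containing \(\partial\Sigma_g^1\), with homology \(\langle\ba_2,\bb_2,\dots,\ba_g,\bb_g\rangle\). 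The formula gives
\[
\sigma(T_{\alpha_1}T_{\alpha_2}^{-1}) = \overline{\ba}_1\sum_{i\ge 2}\overline{\ba}_i\overline{\bb}_i \quad\text{or}\quad (\overline{\ba}_1+1)\sum_{i\ge2}\overline{\ba}_i\overline{\bb}_i,
\]
depending on the side and orientation convention.

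Next I rewrite this in the form \((\overline{\ba}_1+1)\Arf + \epsilon\Arf\), using the Boolean relations \(\overline{\ba}_1^2=\overline{\ba}_1\) and \(\Arf = \overline{\ba}_1\overline{\bb}_1+\sum_{i\ge2}\overline{\ba}_i\overline{\bb}_i\). The correction term \(\epsilon\Arf\) corresponds to adding \(\bz\), i.e. to changing the framing of the lift. So the stated formula holds for the lift whose \(\Z/2\Z\)-coordinate is the one singled out by the splitting of \(H_1(U\Sigma_g;\Z/2\Z)\).

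The main obstacle is exactly this bookkeeping: fixing the splitting \(H_1(U\Sigma_g;\Z/2\Z)\cong H_1(\Sigma_g;\Z/2\Z)\oplus\Z/2\Z\) and the framing of the lifts so that the constant term comes out as \((\overline{\bx}+1)\Arf\) rather than \(\overline{\bx}\Arf\). I would first fix the splitting by the mod \(2\) winding-number (spin-structure) convention for tangent lifts of simple closed curves, then verify it on one explicit curve. The remaining generators follow by \(\Sp\)-equivariance of \(\sigma\) under the action of \(\Mod(\Sigma_g^1)\), which permutes the basis curves \(\ba_i,\bb_i\) while fixing \(\Arf\).
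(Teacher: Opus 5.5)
The paper gives no separate argument for this proposition; it is stated right after Johnson's computation \(\sigma(\PP(\Sigma_g^1;\beta))=\B_1\Arf(\Sigma_g^1)\) from \cite[Lemma~16]{johnson3}. So your proof has to stand on its own, and its skeleton is sound:
\begin{itemize}
\item \(\sigma|_{\PP}\) factors through \(H_1(U\Sigma_g;\Z/2\Z)\).
\item The dimension count shows that the induced map onto \(\B_1\Arf(\Sigma_g^1)\) is an isomorphism for \(g\geq 2\). This is in fact all that Lemma~\ref{lem:disk-pushing} uses.
\item The fiber goes to \(T_\beta\), and \(\sigma(T_\beta)=\Arf(\Sigma_g^1)\).
\item The tangent lift of \(\alpha\) goes to the bounding pair map \(T_{\alpha_1}T_{\alpha_2}^{-1}\).
\item Equivariance reduces the remaining generators to a single nonseparating curve.
\end{itemize}

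The gap is in the one step that carries the content: the value of \(\sigma(T_{\alpha_1}T_{\alpha_2}^{-1})\). You leave two candidates and assert that either can be rewritten as \((\overline{\ba}_1+1)\Arf+\epsilon\Arf\), with \(\epsilon\) absorbed by reframing the lift. This is false for the first candidate. Indeed, \(\overline{\ba}_1\sum_{i\geq 2}\overline{\ba}_i\overline{\bb}_i=\overline{\ba}_1\Arf+\overline{\ba}_1\overline{\bb}_1\) does not lie in \(\B_1\Arf\) at all. View Boolean polynomials as functions of a quadratic form \(\omega\): every element of \(\B_1\Arf\) vanishes at an \(\omega\) with \(\omega(a_1)=\omega(b_1)=1\) and Arf invariant \(0\), while this element equals \(1\) there. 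That contradicts the Lemma~16 you rely on. More fundamentally, the two candidates differ by \(\sum_{i\geq 2}\overline{\ba}_i\overline{\bb}_i\), which is the value of a genus-\((g-1)\) separating twist. Reframing the lift only multiplies the push map by a power of \(T_\beta\), so it changes \(\sigma\) by a multiple of \(\Arf\). No choice of splitting can therefore decide between the two candidates.

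What you need is Johnson's bounding pair formula in its correct form. If \(\gamma,\delta\) cobound a subsurface \(R\) of genus \(h\) not containing \(\partial\Sigma_g^1\), and \(x=[\gamma]\), then
\[
\sigma(T_\gamma T_\delta^{-1})=(\overline{\bx}+1)\sum_{i=1}^{h}\overline{\ba}_i\overline{\bb}_i .
\]
The \(+1\) is forced, for instance by the computation above, or by requiring that the two sides of a bounding pair on a closed surface give the same element of \(\B'\). Using \(\sum_{i\geq 2}\overline{\ba}_i\overline{\bb}_i=\Arf(\Sigma_g^1)+\overline{\ba}_1\overline{\bb}_1\), you get
\[
(\overline{\ba}_1+1)\bigl(\Arf(\Sigma_g^1)+\overline{\ba}_1\overline{\bb}_1\bigr)=(\overline{\ba}_1+1)\Arf(\Sigma_g^1),
\]
since \((\overline{\ba}_1+1)\overline{\ba}_1=0\). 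Hence the tangent lift of any nonseparating simple closed curve representing \(x\) goes exactly to \((\overline{\bx}+1)\Arf(\Sigma_g^1)\). So \(\epsilon=0\), the identification in the statement is simply the one given by tangent lifts of curves realizing the basis, and the framing ``bookkeeping'' you single out as the main obstacle never arises.

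If you would rather not quote the bounding pair formula, your spin-structure idea can work, but it must be used to \emph{determine} the constant, not to choose it. Lemma~16, equivariance under the stabilizer of \(\{\alpha_1,\alpha_2\}\), and the dimension count force the value to be \((\overline{\ba}_1+c)\Arf\), with \(c\in\Z/2\Z\) the same for every nonseparating curve. The identity \(\tilde{\gamma}=\tilde{a}_1+\tilde{a}_2+\bz\) in \(H_1(U\Sigma_g;\Z/2\Z)\), for a simple closed curve \(\gamma\) representing \(a_1+a_2\), follows from Johnson's description of spin structures. It then gives \(c=1\).
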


\begin{proof}[Proof of Lemma~\ref{lem:disk-pushing}]
    Putman~\cite[Theorem~1.2]{putman-cutting} (see also the reformulation
    in~\cite[Theorem~2.2]{putman-johnsonkernel}) proved that
    \[
    \PP(S, \Sigma_g^b; \beta) = \PP(S; \beta) \cong \pi_1(U \widehat{S})
    .\]

    We first show that it suffices to prove the lemma in the case \(b = 1\),
    that is,
    \[
    \PP_{\C}(S, \Sigma_{g'}; \beta) = \PP_{\C}(S, \Sigma_{g}^1; \beta)
    ,\]
    for an embedding \(\Sigma_{g}^1 \hookrightarrow \Sigma_{g'}\) with \(g < g'\). Indeed, for any \(f \in
    \I(S)\), we have \(\sigma_{\Sigma_{g}^1}(f) = 0\) if and only if
    \(\sigma_{\Sigma_{g'}}(f) = 0\). It follows that
    \[
    \PP_{\C}(S, \Sigma_{g'}; \beta) = \PP(S, \Sigma_{g'}; \beta) \cap \ker
    (\sigma_{\Sigma_{g'}})|_{S} = \PP(S, \Sigma_{g}^1; \beta) \cap \ker
    (\sigma_{\Sigma_{g}^1})|_{S} = \PP_{\C}(S, \Sigma_{g}^1; \beta)
    .\]

    Thus, it suffices to prove the claim for \(\PP_{\C}(S, \Sigma_g^1; \beta)\).
    Let \(\Sigma_g = \overline{\Sigma_g^1}\) denote the surface obtained by
    gluing a disk to the boundary component \(\beta = \partial \Sigma_g^1\).

    We then have the following commutative diagram:
    \[
    \begin{tikzcd}
        \PP(\Sigma_g^1; \beta) \arrow[r, "\cong"] &\pi_1(U \Sigma_g) \arrow[r, twoheadrightarrow] &H_1(U \Sigma_g)
        \arrow[r, twoheadrightarrow, "\bmod 2"] &[1em] (\Z/2\Z)^{2g}
        \oplus (\Z/2\Z) \\
        \PP(S, \Sigma_g^1; \beta) \arrow[u, hook'] \arrow[r, "\cong"] &\pi_1(U
        \widehat{S}) \arrow[u, "\iota_*", hook'] \arrow[r, twoheadrightarrow] &H_1(U
        \widehat{S}) \arrow[u, "\iota_*"] 
    \end{tikzcd}
    \]
    By Proposition~\ref{prop:johnson-disk-pushing}, the top row
    coincides with \(\sigma \colon \PP(\Sigma_g^1;
    \beta) \to \B_1 \Arf(\Sigma_g^1)\). Thus, we see that
    \(\PP_{\C}(S, \Sigma_g^1; \beta)\) is precisely the kernel of the following composition:
    \[
    \begin{tikzcd}[column sep = 2.2em]
        \PP(S, \Sigma_g^1; \beta) \rar["\cong"] &\pi_1(U \widehat{S})
        \rar[twoheadrightarrow, "\phi"] & H_1(U \widehat{S}) \rar["i_*"] &H_1( U \Sigma_g)
        \rar[twoheadrightarrow, "\bmod 2"] &[1em] (\Z/2\Z)^{2g} \oplus
        (\Z/2\Z)
    .\end{tikzcd}
    \]

    Choose a basepoint \(\ast \in \beta\) and consider the following simple closed curves:
    \begin{itemize}
        \item \(\gamma_1, \dots, \gamma_n \in \pi_1(\widehat{S}, \ast)\),
        which are isotopic to the boundary components of \(\widehat{S}\);
        \item \(\zeta_1, \dots, \zeta_{2h} \in \pi_1(\widehat{S}, \ast)\) such
        that \([\zeta_1] = a_1, [\zeta_2] = b_1, \dots, [\zeta_{2h-1}] =
        a_h, [\zeta_{2h}] = b_h\).
    \end{itemize}
    We then have
    \begin{align*}
        (i_{*} \circ \phi)^{-1} (\langle 2 a_1, 2 b_1, \dots, 2 a_g, 2 b_g, 2 z \rangle)
        &= \phi^{-1} \left( \langle 2 a_1, 2 b_1, \dots, 2 a_h, 2 b_h, 2 z, c_1 + z, \dots, c_n + z \rangle \right) \\
        &= \langle [\pi_1(U \widehat{S}), \pi_1(U \widehat{S})],
        \overline{\zeta}_1^2, \dots, \overline{\zeta}_{2h}^2, z^2,
        \overline{\gamma}_1 z, \dots, \overline{\gamma}_n z \rangle
    ,\end{align*}
    where \(c_j = [\gamma_j] \in H_1(\widehat{S})\), \(\overline{\gamma}_j \in
    \pi_1(U \widehat{S})\) denotes the canonical tangent lift of \(\gamma_j \in
    \pi_1(\widehat{S})\), and \(z \in \pi_1(U
    \widehat{S})\) corresponds to the generator of the fiber for the bundle
    \(U \Sigma_g \to \Sigma_g\).

    \begin{figure}[H]
        \centering
    \def\svgwidth{.35\columnwidth}%
    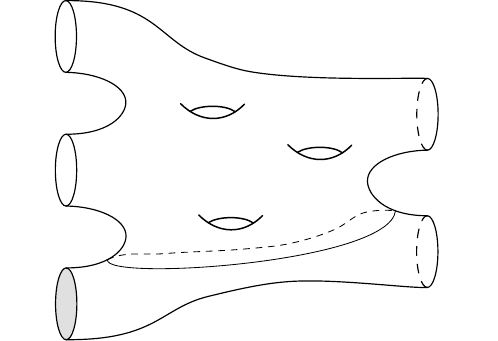%

        \caption{Example of the curves \(\gamma_1'\) and \(\gamma_3'\) on \(S\).}
        \label{fig:bp-set}
    \end{figure}

    Since \([\pi_1(U \widehat{S}), \pi_1(U \widehat{S})] \subset (\pi_1(U
    \widehat{S}))^{(2)}\), we have the
    following inclusion:
    \[
    \ker \left( \mathrm{mod}\,2 \circ i_{*} \circ \phi \right) \subset \langle
    (\pi_1(U \widehat{S}))^{(2)}, \overline{\gamma}_1 z, \dots,
    \overline{\gamma}_n z \rangle
    .\]

    Under the isomorphism \(\pi_1(U \widehat{S}) \cong \PP(S,
    \Sigma_g^1; \beta)\), the element \(\overline{\gamma}_i z\) maps to
    \(T_{\gamma_i} T_{\gamma_i'}^{-1} T_{\beta}\) for all \(i = 1, \dots,
    n\). By Lemma~\ref{lem:sum-relation-2}, in \(H_1(\I(S))\) we have
    \[
    [T_{\gamma_i} T_{\gamma_i'}^{-1} T_{\beta}] = [T_{\gamma_i}] + [T_{\beta}] - [T_{\gamma_i'}] = 0
    .\]
    Thus, \(T_{\gamma_i} T_{\gamma_i'}^{-1} T_{\beta} \in [\I(S), \I(S)]
    \subset \I(S)^{(2)}\), and the lemma follows.
\end{proof}

\begin{proof}[Proof of Theorem~\ref{thm:sigma-kernel}]
    Note that for any capped embedding \(S \hookrightarrow
    \Sigma_g^b\), we have the inclusion \((\I(S))^{(2)} \subset \C(S, \Sigma_g^b)\).
    Thus, it remains to prove the reverse inclusion. We proceed by induction
    on the number of boundary components \(b(S)\).

    \emph{Base case.} If \(b(S) = 1\), the statement is precisely a theorem of
    Johnson \cite[Theorem~2]{johnson3}, which states that \(\C(S, \Sigma_g^b)
    = \ker \sigma_S = (\I(S))^{(2)}\), where \(\sigma_S \colon \I(S) \to
    \B_3(S)\).

    \emph{Inductive step.} Suppose the statement holds for all \(S\) with
    \(b(S) \leq k\). Let us prove that it also holds for \(b(S) = k + 1\).

    By Proposition~\ref{prop:semidirect}, we have 
    \[
    \C(S, \Sigma_g^b) = \PP_{\C}(S, \Sigma_g^b; \beta) \rtimes \C(S',
    \Sigma_g^b)
    .\]
    By the induction hypothesis, \(\C(S', \Sigma_g^b) = (\I(S'))^{(2)}\).
    Since \(S' \subset S\), the inclusion \((\I(S'))^{(2)} \subset
    (\I(S))^{(2)}\) holds, which implies that \(\C(S', \Sigma_g^b) \subset
    (\I(S))^{(2)}\). Furthermore, Lemma~\ref{lem:disk-pushing} yields
    \(\PP_{\C}(S, \Sigma_g^b; \beta) \subset (\I(S))^{(2)}\). From the
    semidirect product decomposition, it follows that
    \(\C(S, \Sigma_g^b) \subset (\I(S))^{(2)}\), completing the proof.
\end{proof}

\subsection{Abelianization of \(\I(S)\)}

The main result of this subsection is the following theorem.

\begin{theorem} \label{thm:torelli-abel-vanish}
    An element \(f \in \I(S)\) lies in \([\I(S), \I(S)]\) if and only if for
    some capped embedding
    \(S \hookrightarrow \Sigma_g^b\) (with \(b \in \{0, 1\}\)) we have
    \(\sigma_{\Sigma_g^b}(f) = 0\) and \(\tau_{\Sigma_g^b}(f) = 0\).
\end{theorem}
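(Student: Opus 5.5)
The forward direction is immediate. Both \(\sigma_{\Sigma_g^b}\) and \(\tau_{\Sigma_g^b}\) are homomorphisms to abelian groups, and for a capped embedding the group \(\I(S)\) is identified with a subgroup of \(\I_g^b\). Hence every element of \([\I(S),\I(S)]\) maps to zero under both, for any capped embedding. The content is the converse. Fix a capped embedding with \(\sigma(f)=0\) and \(\tau(f)=0\). We want to show that \([f]=0\) in \(H_1(\I(S))\).

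I would combine Theorem~\ref{thm:sigma-kernel} with Putman's extension of Johnson's theorem \(\ker\tau=\K\) to surfaces with several boundary components.

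First, \(\tau(f)=0\) gives \(f\in \K(S)\), the subgroup generated by twists about separating curves in \(S\); here "separating" is understood in the sense of Putman's Johnson kernel for the partition \(P\). So we may write \(f=T_{\gamma_1}^{\pm1}\cdots T_{\gamma_m}^{\pm1}\), and in \(H_1(\I(S))\) we have \([f]=\sum \pm[T_{\gamma_i}]\).

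Next I would show that each \([T_{\gamma}]\) has order \(2\) in \(H_1(\I(S))\). This is the announced Lemma~\ref{lem:abel-ord-2}, presumably proved using Lemmas~\ref{lem:sum-relation-1} and~\ref{lem:sum-relation-2}. Granting it, \([f]\) lies in the \(\Z/2\Z\)-span \(W\) of the classes of separating twists, and the signs disappear.

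Then I would use \(\sigma\). By Theorem~\ref{thm:sigma-kernel}, assuming \(g(S)\ge 3\), we have \(\ker\sigma|_{\I(S)}=\I(S)^{(2)}\). Thus \(\sigma\) induces an isomorphism
\[
H_1(\I(S);\Z/2\Z)=\I(S)/\I(S)^{(2)}\;\cong\;\sigma(\I(S)).
\]
Since \(\sigma(f)=0\), the image of \([f]\) in \(H_1(\I(S))\otimes\Z/2\Z\) vanishes. It remains to lift this vanishing from mod-\(2\) homology back to \(H_1(\I(S))\).

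The key claim for this lift is that \(W\cap 2H_1(\I(S))=0\). Suppose \(w\in W\) and \(w=2u\). Apply the Johnson homomorphism \(\tau\), which lands in a torsion-free group. Since \(\tau\) kills \(W\), we get \(2\tau(u)=0\), hence \(\tau(u)=0\). Therefore \(u\) is represented by an element of \(\K(S)\) and lies in \(W\). But \(W\) is \(2\)-torsion, so \(w=2u=0\).

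Applying this to \([f]\): it lies in \(W\), and its vanishing mod \(2\) means \([f]\in 2H_1(\I(S))\). Hence \([f]=0\), i.e.\ \(f\in[\I(S),\I(S)]\).

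One gap is the small-genus cases, \(g(S)\le 2\). There I would either assume \(g(S)\ge 3\), as in Theorem~\ref{thm:sigma-kernel}, or reduce to that case.

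The main obstacle I expect is the order-two statement for \([T_\gamma]\) in \(H_1(\I(S))\) rather than in \(H_1(\I_g^b)\). My first approach would be to adapt Johnson's lantern-type argument from his Corollary to Lemma~3 inside a genus-\(\ge 2\) subsurface of \(S\) containing \(\gamma\). Failing that, I would use the relations \eqref{eq:sum-relation-1} and \eqref{eq:sum-relation-2}, together with a reduction of arbitrary separating curves to genus-\(1\) ones, to express \(2[T_\gamma]\) as zero. A secondary check is that Putman's theorem indeed identifies \(\ker\tau\) on \(\I(S)\) with the subgroup generated by separating twists supported in \(S\) under a capped embedding.
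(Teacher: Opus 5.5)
Your argument is correct and is essentially the paper's proof. Theorem~\ref{thm:sigma-kernel} gives \(f=f_1^2\cdots f_N^2\), so \([f]=2u\) with \(u=[f_1\cdots f_N]\). Torsion-freeness of the target of \(\tau\) then forces \(\tau(f_1\cdots f_N)=0\), Putman's theorem puts \(f_1\cdots f_N\) in \(\K(S)\), and Lemma~\ref{lem:abel-ord-2} kills \(2u\); your formulation as \(W\cap 2H_1(\I(S))=0\) is the same computation. Your preliminary step writing \(f\) itself as a product of separating twists is unnecessary, since \(\tau(u)=0\) already follows from \(\tau(f)=2\tau(u)=0\). As you note, the hypothesis \(g(S)\geq 3\) is needed, and the paper's proof relies on it implicitly in the same way.
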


This is a direct generalization of a result of Johnson~\cite[Theorem~3(b),
Theorem~6(b)]{johnson3}. We begin with the following lemma.

\begin{lemma} \label{lem:abel-ord-2}
    Let \(\gamma\) be a separating simple closed curve on \(S\) with \(g(S)
    \geq 3\). Then \(2 [T_{\gamma}] = 0\) in \(H_1(\I(S))\).
\end{lemma}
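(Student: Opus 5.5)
The plan is to derive \(2[T_\gamma] = 0\) purely from the two additive relations Lemma~\ref{lem:sum-relation-1} and Lemma~\ref{lem:sum-relation-2}, which already hold in \(H_1(\I(S))\). The argument is modelled on Johnson's proof in the closed case~\cite[Corollary to Lemma~3]{johnson3}. We first reduce to the case where \(\gamma\) has genus \(1\) on one of its sides. Since \(\gamma\) is separating and nontrivial in \(\I(S)\), it cuts \(S\) into two pieces, at least one of which has positive genus. By Lemma~\ref{lem:sum-relation-2}, applied to a pair of pants inside that piece, \([T_\gamma]\) is a sum of classes \([T_{\gamma_1}] + [T_{\gamma_2}]\) with \(\gamma_1,\gamma_2\) bounding smaller subsurfaces. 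Iterating, and using that the subsurface on the other side of the relevant curve has positive genus, we express \([T_\gamma]\) as a sum of classes of twists about separating curves each of which bounds either a genus-\(1\) subsurface with one boundary component, or a subsurface of genus \(0\) containing several boundary components of \(S\).

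The second type needs care: such a curve bounds a pair of pants or a sphere with holes cut out by boundary components of \(S\). We will instead re-choose the decomposition so that every curve bounds a genus-\(1\) piece. Alternatively, when it is a boundary-parallel configuration, we handle it by a further application of Lemma~\ref{lem:sum-relation-2} on the complementary side, which has genus \(\geq 2\) because \(g(S)\geq 3\). In any case it suffices to show \(2[T_\delta]=0\) for every separating \(\delta\) appearing in the reduced expression, and we aim to reduce to \(\delta\) of genus \(1\).

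For \(\delta\) cutting off a torus with one boundary component \(T\), we use \(g(S)\geq 3\) to find, disjoint from \(T\), another genus-\(1\) subsurface \(T'\). We arrange that \(\delta\), \(\delta'=\partial T'\) and a curve \(\epsilon\) bounding \(T\cup T'\) cobound a pair of pants. Lemma~\ref{lem:sum-relation-1} then gives \([T_\delta]+[T_\epsilon]=[T_{\delta'}]\), and by symmetry also \([T_{\delta'}]+[T_\epsilon]=[T_\delta]\). We stress that no commutativity assumption is needed, since \(H_1\) is abelian. Adding the two relations yields \(2[T_\epsilon]=0\), and subtracting them yields \(2[T_\delta] = 2[T_{\delta'}]\). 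To get \(2[T_\delta]=0\) itself, we use a third genus-\(1\) piece \(T''\), which exists since \(g(S)\geq 3\). Applying Lemma~\ref{lem:sum-relation-2} to the pants bounded by \(\delta\), \(\delta'\), \(\epsilon\), where the side of \(\epsilon\) away from \(\delta\cup\delta'\) has positive genus thanks to \(T''\), gives \([T_\delta]+[T_{\delta'}]=[T_\epsilon]\). Combining this with \([T_\delta]+[T_\epsilon]=[T_{\delta'}]\) gives \(2[T_\delta]=0\).

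The main obstacle is making these configurations exist inside \(S\) itself rather than inside a closed surface. The curves must be separating in \(S\), and the hypotheses of Lemmas~\ref{lem:sum-relation-1} and~\ref{lem:sum-relation-2} must hold, in particular the positive-genus condition on the correct side in the presence of boundary components. We also need to handle the curves in the first reduction that cut off only boundary components. We expect to treat those by choosing the pants decompositions so that each such curve is paired with the genus-\(1\) pieces above, again relying on \(g(S)\geq 3\).
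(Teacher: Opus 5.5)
Your computation for a curve \(\delta\) that cuts off a one-holed torus is correct and coincides with the paper's core step. With \(\gamma=\delta\), \(\delta_1=\delta'\), \(\delta_2=\epsilon\), the paper adds \([T_\gamma]+[T_{\delta_2}]=[T_{\delta_1}]\) (Lemma~\ref{lem:sum-relation-1}) to \([T_\gamma]+[T_{\delta_1}]=[T_{\delta_2}]\) (Lemma~\ref{lem:sum-relation-2}). Your detour through \(2[T_\epsilon]=0\) and \(2[T_\delta]=2[T_{\delta'}]\) is unnecessary.

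\textbf{The gap is the reduction to that case.}

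\begin{itemize}
\item Lemma~\ref{lem:sum-relation-2} with \(\gamma\) as the distinguished third curve requires the side of \(\gamma\) away from the pants to have positive genus. So if one side of \(\gamma\) has genus \(0\) (for instance \(\gamma\) is a boundary component of \(S\), or encloses only boundary components), the pants cannot be placed in the positive-genus side as you propose. Nothing in your argument reduces such \(\gamma\) to curves cutting off one-holed tori.
\item Iterating inside a side that contains boundary components of \(S\) unavoidably produces curves of exactly this kind. It can also produce curves cutting off a positive-genus piece with extra holes, which your core step does not cover either. So your suggestion to re-choose the decomposition so that every curve cuts off a one-holed torus cannot work.
\end{itemize}

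You identify this as the main obstacle but only say how you expect to treat it. It is not a peripheral case. What distinguishes this lemma from Johnson's result for \(\I_g^b\) is precisely that \(S\) has several boundary components. The lemma is also applied, in the proof of Theorem~\ref{thm:torelli-abel-vanish}, to arbitrary separating curves \(\eta_j\) on \(S\).

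\textbf{The missing idea is that such curves need no reduction at all.} Run the core configuration directly on \(\gamma\), as the paper does when \(\gamma\) is a boundary component.

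\begin{itemize}
\item Let \(A\) be a side of \(\gamma\) with \(g(A)\geq 2\); it exists since \(g(S)\geq 3\) (take \(A=S\) if \(\gamma\) is a boundary component).
\item Choose \(\delta_1\subset A\) cutting off a one-holed torus, and \(\delta_2\subset A\) so that \(\gamma\cup\delta_1\cup\delta_2\) bounds a pair of pants.
\item The side of \(\delta_1\) away from the pants is a torus, and the side of \(\delta_2\) away from the pants has genus \(g(A)-1\geq 1\).
\item Hence Lemma~\ref{lem:sum-relation-2} applies with either \(\delta_1\) or \(\delta_2\) as the distinguished curve. This gives \([T_\gamma]+[T_{\delta_2}]=[T_{\delta_1}]\) and \([T_\gamma]+[T_{\delta_1}]=[T_{\delta_2}]\).
\end{itemize}

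Their sum is \(2[T_\gamma]=0\), with no condition on the genus of \(\gamma\). The entire first half of your proposal can be replaced by this.
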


\begin{proof}
    If \(\gamma\) is not a boundary component, then
    Lemma~\ref{lem:sum-relation-1} implies that it suffices to prove the lemma
    in the case where the genus of \(\gamma\) is \(1\). Indeed, suppose that
    \(\gamma\) bounds a subsurface \(R\). Then we may choose disjoint
    separating simple closed curves \(\gamma_1, \gamma_2\) on \(R\) such
    that:
    \begin{itemize}[leftmargin=1cm]
        \item \(\gamma \cup \gamma_1 \cup \gamma_2\) bounds a pair of pants, and
        \item \(\gamma_1\) has genus~\(1\).
    \end{itemize}
    By Lemma~\ref{lem:sum-relation-1}, we have
    \[
    [T_{\gamma}] + [T_{\gamma_2}] = [T_{\gamma_1}]
    .\]
    Applying the same argument inductively to \(T_{\gamma_2}\) and the
    resulting terms, we find that \([T_{\gamma}]\) can be expressed as a sum
    \[
    [T_{\gamma}] = \sum_{j=1}^{m} [T_{\gamma_j}]
    ,\]
    where each \(\gamma_j\) has genus~\(1\).

    Therefore, in further arguments we assume that \(\gamma\) is either a
    boundary component of \(S\) or a genus-\(1\) separating simple closed
    curve.

    Choose separating simple closed curves \(\delta_1, \delta_2\) such that:
    \begin{itemize}
        \item \(\delta_1\) is of genus \(1\);
        \item \(\gamma \cup \delta_1 \cup \delta_2\) bounds a pair of pants.
    \end{itemize}
    Then by Lemma~\ref{lem:sum-relation-1} we have
    \[
    [T_{\gamma}] + [T_{\delta_2}] = [T_{\delta_1}]
    .\] 
    Since \(g(S) \geq 3\), the subsurface bounded by \(\delta_2\) that does
    not contain \(\gamma \cup \delta_1\) has positive genus. Then
    Lemma~\ref{lem:sum-relation-2} implies that
    \[
    [T_{\gamma}] + [T_{\delta_1}] = [T_{\delta_2}]
    .\] 
    Summing these two relations yields \(2 [T_{\gamma}] = 0\) in \(H_1(\I(S))\).
\end{proof}

\begin{proof}[Proof of Theorem~\ref{thm:torelli-abel-vanish}]
    One direction is vacuous, since both \(\sigma_{\Sigma_g^b}\) and
    \(\tau_{\Sigma_g^b}\) are homomorphisms into abelian groups. So suppose
    that we have \(\sigma_{\Sigma_g^b}(f) = 0\) and \(\tau_{\Sigma_g^b}(f) =
    0\) for some capped embedding \(S \hookrightarrow \Sigma_g^b\). By
    Theorem~\ref{thm:sigma-kernel}
    we know that since \(f \in \C(S)\), we have 
    \[
    f = f_1^2 \cdots f_N^2
    ,\] 
    for some \(f_1, \dots, f_N \in \I(S)\). This implies that in
    \(H_1(\I(S))\) we have 
    \[
    [f] = 2 [f_1] + \dots + 2 [f_N] 
    .\] 
    However we also have 
    \[
    0 = \tau_{\Sigma_g^b}(f) = 2 \tau_{\Sigma_g^b}(f_1) + \dots + 2
    \tau_{\Sigma_g^b}(f_N) = 2 \tau_{\Sigma_g^b}(f_1 \cdots f_N)
    .\] 
    Putman's theorem~\cite[Theorem~A]{putman-johnsonkernel} implies that \(f_1
    \cdots f_N \in \K(S)\), i.e.,
    \[
    f_1 \cdots f_N = T_{\eta_1} \cdots T_{\eta_M}
    ,\] 
    where \(\eta_1, \dots, \eta_M\) are separating simple closed curves on
    \(S\).

    It follows that 
    \[
    [f] = 2 [T_{\eta_1}] + \dots + 2 [T_{\eta_M}]
    .\] 
    By Lemma~\ref{lem:abel-ord-2}, we have \(2 [T_{\eta_j}] = 0\) in
    \(H_1(\I(S))\) for all \(j = 1, \dots, M\). This implies that \([f] = 0\)
    in \(H_1(\I(S))\), as desired.
\end{proof}

\begin{theorem} 
    For any capped embedding \(S \hookrightarrow \Sigma_g^b\) (with \(b \in
    \{0, 1\}\)), the pushforward map \(H_1(\I(S)) \to H_1(\I_g^b)\) is
    injective for \(g(S) \geq 3\).
\end{theorem}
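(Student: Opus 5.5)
The injectivity of \(H_1(\I(S)) \to H_1(\I_g^b)\) will follow directly from Theorem~\ref{thm:torelli-abel-vanish}. The strategy is to reduce to an element-level statement and then use that \(\sigma\) and \(\tau\) factor through the abelianization of \(\I_g^b\).

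First, let \(x \in H_1(\I(S))\) lie in the kernel of the pushforward \(i_* \colon H_1(\I(S)) \to H_1(\I_g^b)\), where \(i \colon S \hookrightarrow \Sigma_g^b\) is the given capped embedding. Since \(H_1(\I(S))\) is the abelianization, we may write \(x = [f]\) for some \(f \in \I(S)\). The condition \(i_*[f] = 0\) means that \(i_*(f) \in [\I_g^b, \I_g^b]\).

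Second, recall that \(\sigma_{\Sigma_g^b} \colon \I_g^b \to \B_3\) (respectively \(\B_3'\)) and \(\tau_{\Sigma_g^b}\) are homomorphisms to abelian groups. They therefore vanish on \([\I_g^b, \I_g^b]\), which gives
\[
\sigma_{\Sigma_g^b}(f) = 0 \quad\text{and}\quad \tau_{\Sigma_g^b}(f) = 0,
\]
with \(f\) viewed in \(\I_g^b\) via the capped embedding. Because the embedding is capped, \(i_*\) is injective on mapping class groups, so there is no ambiguity in this identification.

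Third, since \(g(S) \geq 3\), Theorem~\ref{thm:torelli-abel-vanish} applies to the capped embedding \(S \hookrightarrow \Sigma_g^b\). It yields \(f \in [\I(S), \I(S)]\), so \(x = [f] = 0\) in \(H_1(\I(S))\), which proves injectivity.

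I do not expect a real obstacle at this stage. All the substantive work was done in Theorem~\ref{thm:torelli-abel-vanish}, which rests on Theorem~\ref{thm:sigma-kernel}, Putman's theorem on \(\ker\tau\), and Lemma~\ref{lem:abel-ord-2}. The only point requiring care is that Theorem~\ref{thm:torelli-abel-vanish} is stated for \emph{some} capped embedding, and here we simply use the given one. The only hypotheses to check are the genus bound \(g(S) \geq 3\) and \(b \in \{0,1\}\), both of which are assumed in the statement.
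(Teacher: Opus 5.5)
Your proposal is correct and follows the paper's proof. The paper cites Johnson's characterization of when \([f]=0\) in \(H_1(\I_g^b)\) but uses only its trivial direction, namely that \(\sigma\) and \(\tau\) vanish on commutators, which you derive directly before applying Theorem~\ref{thm:torelli-abel-vanish} to the given capped embedding.
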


\begin{proof}
    By~\cite[Theorem~3(b), Theorem~6(b)]{johnson3}, for \(f \in \I_g^b\), supported on \(S\), we have
    \([f] \in H_1(\I_g^b)\) vanishes if and only if
    \(\tau_{\Sigma_g^b}(f) = 0\) and \(\sigma_{\Sigma_g^b}(f) = 0\). Then
    Theorem~\ref{thm:torelli-abel-vanish} implies \([f] = 0\) in
    \(H_1(\I(S))\), and the theorem follows.
\end{proof}

    \section{The Birman--Craggs--Johnson homomorphism and relations in \(H_k(\I_g)\)}
\label{sec:BCJ-k}

\subsection{The key relation between abelian cycles}

In this subsection, we introduce and establish a specific relation between
abelian cycles arising from geometric considerations, which serves as an
analogue to~\cite[Proposition~6.1]{vladimirov}.

Recall from~\cite[Section~8]{vladimirov} that a collection of pairwise
disjoint, pairwise nonisotopic separating simple closed curves \(\delta_1,
\dots, \delta_k\) on \(\Sigma_g\) is called an \emph{admissible partition} of
\(\Sigma_g\). Given such a partition, a curve \(\delta_i\) is said to be
\emph{outermost} if it bounds a subsurface of \(\Sigma_g\) containing no other
curves of the collection. This subsurface is called a \emph{cap} over
\(\delta_i\) and is denoted by \(\Cap(\delta_i)\).

\begin{proposition}\label{prop:key-relation-k}
    Let \(\delta_1, \dots, \delta_{k-1}\) be an admissible partition of
    \(\Sigma_g\) such that all curves are outermost, and let
    \[
    S = \Sigma_g \setminus \left( \Cap(\delta_1)\cup \cdots \cup
    \Cap(\delta_{k-1}) \right)
    .\]
    If \(g(S) \geq 3\), then for separating simple closed curves
    \(\theta_1, \dots, \theta_m\) on \(S\), the following conditions are
    equivalent:
    \begin{itemize}
        \item In \(\B_2'\) we have
        \[
        \sigma(T_{\theta_1}) + \cdots + \sigma(T_{\theta_m}) \in \langle
        \sigma(T_{\delta_1}), \dots, \sigma(T_{\delta_{k-1}}) \rangle
        .\]
        \item In \(H_k(\I_g)\) we have
        \[
        \A(T_{\theta_1}, T_{\delta_1}, \dots, T_{\delta_{k-1}}) + \cdots +
        \A(T_{\theta_m}, T_{\delta_1}, \dots, T_{\delta_{k-1}}) = 0
        .\]
    \end{itemize}
\end{proposition}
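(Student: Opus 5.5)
We prove the two implications separately. The plan for (1)\(\Rightarrow\)(2) is to move the whole problem into \(H_1(\I(S))\), where Theorem~\ref{thm:torelli-abel-vanish} gives a criterion for vanishing. The converse (2)\(\Rightarrow\)(1) should follow by applying \(\sigma_*\) to the relation in \(H_k(\I_g)\).

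Since the curves \(\delta_1, \dots, \delta_{k-1}\) are outermost and pairwise disjoint, the caps \(\Cap(\delta_i)\) are disjoint from \(S\), and \(S\) is capped in \(\Sigma_g\). The Dehn twists \(T_{\delta_1}, \dots, T_{\delta_{k-1}}\) commute with every element of \(\I(S)\). Hence there is a well-defined homomorphism
\[
\Psi \colon H_1(\I(S)) \to H_k(\I_g), \qquad [f] \mapsto \A(f, T_{\delta_1}, \dots, T_{\delta_{k-1}}).
\]
It comes from the map \(\I(S) \times \Z^{k-1} \to \I_g\) together with the cross product \(H_1(\I(S)) \otimes H_{k-1}(\Z^{k-1}) \to H_k(\I(S) \times \Z^{k-1})\). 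Additivity is property (I), and \(\Psi\) kills commutators by property (III).

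\emph{Step 1: relations (1)\(\Rightarrow\)(2).} Suppose \(\sum_j \sigma(T_{\theta_j}) = \sum_{i \in I} \sigma(T_{\delta_i})\). Each \(\delta_i\) is isotopic, inside \(\Sigma_g\), to a boundary component of \(S\), so \(T_{\delta_i}\) is a boundary twist in \(\I(S)\). Set
\[
f = T_{\theta_1} \cdots T_{\theta_m} \prod_{i \in I} T_{\delta_i} \in \I(S).
\]
Then \(\sigma(f) = 0\), where \(2\sigma = 0\) in \(\B_2'\) takes care of signs. Also \(\tau(f) = 0\), since all the twists are about separating curves.

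\emph{Step 2: apply Theorem~\ref{thm:torelli-abel-vanish}.} To use it we need \(\sigma_{\Sigma_g}(f) = 0\) for the capped embedding \(S \hookrightarrow \Sigma_g\) itself. We have this, because \(\sigma\) on \(\I(S)\) is just the restriction of \(\sigma_{\Sigma_g}\). The theorem requires \(g(S) \geq 3\), which is exactly our hypothesis. We conclude \([f] = 0\) in \(H_1(\I(S))\), so
\[
0 = \Psi([f]) = \sum_j \A(T_{\theta_j}, T_{\delta_1}, \dots, T_{\delta_{k-1}}) + \sum_{i \in I} \A(T_{\delta_i}, T_{\delta_1}, \dots, T_{\delta_{k-1}}).
\]
Each term in the last sum has a repeated entry, so it vanishes by (II): an abelian cycle with a repeated element is zero, because \(H_k(\Z^k)\) is an exterior power. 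This gives (2).

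\emph{Step 3: (2)\(\Rightarrow\)(1).} Apply \(\sigma_*\) to the relation in (2) to get
\[
\Bigl(\sum_j \sigma(T_{\theta_j})\Bigr) \wedge \sigma(T_{\delta_1}) \wedge \dots \wedge \sigma(T_{\delta_{k-1}}) = 0 \quad \text{in } \wedge^k \B_2'.
\]
Over a field, \(v \wedge w_1 \wedge \dots \wedge w_{k-1} = 0\) with \(w_1 \wedge \dots \wedge w_{k-1} \neq 0\) forces \(v \in \langle w_1, \dots, w_{k-1} \rangle\). So it remains to show that the values \(\sigma(T_{\delta_i})\) are linearly independent in \(\B_2'\).

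This independence is the main point to check. The \(\delta_i\) bound disjoint caps \(R_i\) with symplectic subspaces \(\bV_i\) that are mutually orthogonal. Moreover, \(S\) has genus at least \(3\), so \(\bigoplus_i \bV_i\) is a proper subspace and its complement has dimension at least \(6\). Each \(\sigma(\bV_i) = \Arf(\bV_i)\) is a quadratic form in variables from \(\bV_i\). A sum \(\sum_{i \in J} \Arf(\bV_i)\) is a nonzero quadratic polynomial supported on a proper subspace. It cannot equal \(0\) or \(\Arf \equiv 0\) modulo the ideal \((\Arf)\), because degree at most \(2\) elements of \((\Arf)\) are only \(0\) and \(\Arf\). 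Checking this in an adapted symplectic basis completes the argument.
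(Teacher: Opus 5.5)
Your proof is correct and follows the paper's argument. For (1)$\Rightarrow$(2) you append the relevant $\delta_i$ so that $\sigma(f)=0$, invoke Theorem~\ref{thm:torelli-abel-vanish} to conclude $[f]=0$ in $H_1(\I(S))$, and use that the $T_{\delta_i}$ commute with $\I(S)$; your map $\Psi$ is a clean repackaging of the paper's commutator computation. For (2)$\Rightarrow$(1) you apply $\sigma_*$, and your explicit check that $\sigma(T_{\delta_1}),\dots,\sigma(T_{\delta_{k-1}})$ are linearly independent in $\B_2'$ supplies a step the paper leaves implicit when it writes ``it follows that''.
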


\begin{proof}
    The proof of the proposition proceeds in an identical fashion to the proof
    of~\cite[Proposition~7.1]{vladimirov}. First, assume that 
    \[
    \A(T_{\theta_1}, T_{\delta_1}, \dots, T_{\delta_{k-1}}) + \cdots +
    \A(T_{\theta_m}, T_{\delta_1}, \dots, T_{\delta_{k-1}}) = 0
    .\] 
    Applying \(\sigma_k\) then yields  
    \begin{align*}
        0 &= \sigma_k(
        \A(T_{\theta_1}, T_{\delta_1}, \dots, T_{\delta_{k-1}}) + \cdots +
        \A(T_{\theta_m}, T_{\delta_1}, \dots, T_{\delta_{k-1}})) \\
        &= 
        \sigma_k(\A(T_{\theta_1} \cdots T_{\theta_m}, T_{\delta_1}, \dots,
        T_{\delta_{k-1}})) \\
        &= 
        \sigma(T_{\theta_1} \cdots T_{\theta_m}) \wedge \sigma(T_{\delta_1})
        \wedge \cdots \wedge \sigma(T_{\delta_{k-1}})
    .\end{align*}
    It follows that
    \[
    \sigma(T_{\theta_1}) + \cdots + \sigma(T_{\theta_m}) \in \langle
    \sigma(T_{\delta_1}), \dots, \sigma(T_{\delta_{k-1}}) \rangle
    ,\]
    as desired.

    Conversely, let us prove the other direction. By adding the curves
    \(\delta_1, \dots, \delta_{k-1}\) to the curves \(\theta_1, \dots,
    \theta_m\) if necessary, we may assume that
    \[
    \sigma(T_{\theta_1} \cdots T_{\theta_m}) = \sigma(T_{\theta_1}) + \cdots + \sigma(T_{\theta_m}) = 0
    .\]
    This modification does not alter the equality to be proved, as the abelian
    cycle vanishes whenever two entries coincide.

    For \(f = T_{\theta_1} \cdots T_{\theta_m}\), we have \(\tau_{\Sigma_g}(f)
    = 0\) and \(\sigma_{\Sigma_g}(f) = 0\). By
    Theorem~\ref{thm:torelli-abel-vanish}, it follows that \(f \in [\I(S),
    \I(S)]\); that is,
    \[
    f = [g_1, h_1] \cdots [g_{\ell}, h_{\ell}]
    ,\]
    where the elements \(g_j, h_j \in \I_g\) are supported on \(S\) for \(j =
    1, \dots, \ell\). This implies that
    \[
    \A(f, T_{\delta_1}, \dots, T_{\delta_{k-1}}) = \sum_{j=1}^{\ell} \A([g_j,
    h_j], T_{\delta_1}, \dots, T_{\delta_{k-1}}) = 0
    ,\]
    since each \(g_j\) and \(h_j\) commutes with \(T_{\delta_1}, \dots, T_{\delta_{k-1}}\).
\end{proof}

\subsection{Linear algebraic viewpoint}

Let \(\delta_1, \dots, \delta_k\) be an admissible partition of \(\Sigma_g\)
where each curve is outermost. This partition induces a splitting of
\(H_1(\Sigma_g)\) with respect to the intersection form:
\[
H_1(\Sigma_g) = U_1 \oplus \dots \oplus U_k \oplus U
,\]
where \(U_i = H_1(\Cap(\delta_i))\) for \(i = 1, \dots, k\), and \(U =
H_1(\Sigma_g \setminus \left( \Cap(\delta_1) \cup \dots \cup \Cap(\delta_k)
\right))\). We can therefore write the abelian cycle \(\A(T_{\delta_1},
\dots, T_{\delta_k})\) as \(\A(U_1, \dots, U_k)\); this is well-defined
by the following lemma.

\begin{lemma}
    The abelian cycle \(\A(T_{\delta_1}, \dots, T_{\delta_k})\) is uniquely determined
    by the collection \((U_1, \dots, U_k)\).
\end{lemma}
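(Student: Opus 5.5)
We must show: if two admissible outermost partitions $\delta_1,\dots,\delta_k$ and $\delta_1',\dots,\delta_k'$ induce the same collection $(U_1,\dots,U_k)$, that is, $H_1(\Cap(\delta_i)) = H_1(\Cap(\delta_i'))$ for every $i$, then $\A(T_{\delta_1},\dots,T_{\delta_k}) = \A(T_{\delta_1'},\dots,T_{\delta_k'})$. We replace one curve at a time: it suffices to prove
\[
\A(T_{\delta_1},T_{\delta_2},\dots,T_{\delta_k}) = \A(T_{\delta_1'},T_{\delta_2},\dots,T_{\delta_k}),
\]
where $\delta_1'$ is disjoint from $\delta_2,\dots,\delta_k$, the collection $\delta_1',\delta_2,\dots,\delta_k$ is still an outermost admissible partition, and $H_1(\Cap(\delta_1')) = U_1$. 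Iterating this over $i$ gives the lemma.

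\textbf{Step 1: moving the curves into the right position.} Both tuples consist of pairwise disjoint separating curves whose caps are disjoint subsurfaces with prescribed genera and prescribed (pairwise orthogonal, symplectic) homology. By the classification of surfaces, there is a homeomorphism $\phi$ with $\phi(\delta_i) = \delta_i'$ for all $i$. Its action on $H_1$ preserves each $U_i$, and hence also $U$, the orthogonal complement of $U_1 \oplus \dots \oplus U_k$.

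\textbf{Step 2: correcting the symplectic action.} The stabilizers of the caps act on each $U_i$ and on $U$ through the full symplectic groups of these summands, since mapping class groups of $\Sigma_h^1$ surject onto $\Sp_{2h}(\Z)$. Composing $\phi$ with such elements, we may assume $\phi$ acts trivially on $H_1(\Sigma_g)$, so $\phi \in \I_g$. Then $\phi$ conjugates $T_{\delta_i}$ to $T_{\delta_i'}$. Since inner automorphisms act trivially on group homology, we get
\[
\A(T_{\delta_1'},\dots,T_{\delta_k'}) = \phi_*\A(T_{\delta_1},\dots,T_{\delta_k}) = \A(T_{\delta_1},\dots,T_{\delta_k}).
\]

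\textbf{The main obstacle.} With this argument the one-curve-at-a-time reduction becomes unnecessary, since the whole tuple can be moved at once. The point that needs care is that the composite map preserves each individual cap curve up to isotopy. This is guaranteed because every correcting factor is supported either in a cap $\Cap(\delta_i')$ or in the complementary surface, and such maps fix each $\delta_i'$.

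The other thing to check is that the induced action of $\Mod(S)$ on $U = H_1(S)$ is the full $\Sp(U)$, where $S$ is the complement of the caps with $k$ boundary components. Here $U$ is the symplectic part of $H_1(S)$, coming from the genus of $S$, and it suffices to cap all boundary components but one and use the surjectivity $\Mod(\Sigma_h^1) \to \Sp_{2h}(\Z)$ for the resulting surface.
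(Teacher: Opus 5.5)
Your proposal is correct and takes the paper's approach: the paper simply asserts that some $f \in \I_g$ satisfies $f(\delta_i) = \delta_i'$ for all $i$ and uses that conjugation by $f$ acts trivially on $H_k(\I_g)$, while your Steps 1--2 supply the standard justification for such an $f$. That justification is an orientation-preserving piecewise homeomorphism obtained from the classification of surfaces, corrected by mapping classes supported in the caps and in their complement, using surjectivity onto the symplectic groups. Delete the opening one-curve-at-a-time reduction: it would need $\delta_1'$ to be disjoint from $\delta_2, \dots, \delta_k$, which need not hold, and your final argument moves the whole tuple at once without using it.
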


\begin{proof}
    Let \(\delta_1', \dots, \delta_k'\) be another admissible partition inducing
    the same splitting of \(H_1(\Sigma_g)\). Then there exists an element
    \(f \in \I_g\) mapping \(\delta_i\) to \(\delta_i'\) for all \(i = 1, \dots, k\).
    Consequently,
    \[
    \A(T_{\delta_1}, \dots, T_{\delta_k}) = f_{*} (\A(T_{\delta_1}, \dots, T_{\delta_k})) =
    \A(T_{\delta_1'}, \dots, T_{\delta_k'})
    ,\]
    which completes the proof.
\end{proof}

\subsection{Linear algebraic reformulation}

We begin by restating a special case of
Proposition~\ref{prop:key-relation-k} in terms of linear algebra.

\begin{proposition} \label{prop:key-rel-k-lin-alg}
    Let \(g \geq 4\) and \(2 \leq k \leq g-2\). Let \(V_1, \dots, V_n, U_1, \dots, U_{k-1}
    \subset H_1(\Sigma_g)\) be rank-\(2\) symplectic submodules of
    \(H_1(\Sigma_g)\) such that:
    \begin{enumerate}
        \item \(U_i \perp U_j\) for all \(i \neq j\);
        \item \(V_\ell \perp U_j\) for all \(\ell, j\).
    \end{enumerate}
    Then the following assertions are equivalent:
    \begin{itemize}
        \item In \(\B_2'\) we have
        \[
        \sigma(V_1) + \dots + \sigma(V_n) \in
        \langle \sigma(U_1), \dots, \sigma(U_{k-1}) \rangle
        .\] 
        
        \item In \(H_k(\I_g)\) we have
        \[
        \A(V_1, U_1, \dots, U_{k-1}) + \dots +
        \A(V_n, U_1, \dots, U_{k-1}) = 0
        .\] 
    \end{itemize}
\end{proposition}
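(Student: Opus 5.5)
Since each \(U_j\) and each \(V_\ell\) is a rank-\(2\) symplectic submodule, the plan is to realize the whole configuration by curves and then apply Proposition~\ref{prop:key-relation-k} directly.

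\emph{Realizing the \(U_j\).} First I will choose a symplectic basis of \(H_1(\Sigma_g)\) adapted to the orthogonal splitting
\[
H_1(\Sigma_g) = U_1 \oplus \dots \oplus U_{k-1} \oplus W, \qquad W = (U_1 \oplus \dots \oplus U_{k-1})^{\perp}.
\]
Because \(\Mod(\Sigma_g) \to \Sp_{2g}(\Z)\) is surjective, there are pairwise disjoint genus-\(1\) separating curves \(\delta_1, \dots, \delta_{k-1}\) with \(H_1(\Cap(\delta_j)) = U_j\). Every \(\delta_j\) is then outermost. By the lemma in the linear-algebraic viewpoint, \(\A(U_1, \dots, U_{k-1}, \cdot)\) does not depend on this choice. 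The complementary surface \(S = \Sigma_g \setminus \bigcup_j \Cap(\delta_j)\) has genus \(g-k+1\), which is at least \(3\) because \(k \leq g-2\). Also \(H_1(S) = W\).

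\emph{Realizing the \(V_\ell\).} Each \(V_\ell\) lies in \(W\), since \(V_\ell \perp U_j\) for all \(j\). I will show that every rank-\(2\) symplectic submodule of \(W\) is of the form \(H_1(\Cap(\theta))\) for a genus-\(1\) separating curve \(\theta\) on \(S\), in the sense of the extended Torelli setting with the capped embedding \(S \hookrightarrow \Sigma_g\). To do this, take any genus-\(1\) separating curve \(\theta_0\) in \(S\) cutting off a one-holed torus \(T_0\). The subgroup of \(\Mod(\Sigma_g)\) supported on \(S\) surjects onto \(\Sp(W)\), because \(S\) is a genus \(g-k+1\) surface with boundary and its capped closure has homology \(W\). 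The group \(\Sp(W)\) acts transitively on rank-\(2\) symplectic submodules. Hence some \(\phi\) supported on \(S\) satisfies \(\phi_*(H_1(T_0)) = V_\ell\), and I set \(\theta_\ell = \phi(\theta_0)\). Then \(\A(V_\ell, U_1, \dots, U_{k-1}) = \A(T_{\theta_\ell}, T_{\delta_1}, \dots, T_{\delta_{k-1}})\), and \(\sigma(V_\ell) = \sigma(T_{\theta_\ell})\) by~\eqref{eq:BCJ-sep}. The curves \(\theta_\ell\) need not be disjoint from each other, but Proposition~\ref{prop:key-relation-k} does not require this.

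\emph{Conclusion.} Under these identifications, both assertions of the present proposition become literally the two conditions of Proposition~\ref{prop:key-relation-k} for \(\theta_1, \dots, \theta_n\) on \(S\). The equivalence then follows from that proposition.

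The main obstacle is the realization step for the \(V_\ell\), namely surjectivity of the mapping classes supported on \(S\) onto \(\Sp(W)\). I will get this from the standard fact that Dehn twists about nonseparating curves in \(S\) generate transvections generating \(\Sp(W)\), noting that the boundary components of \(S\) are separating in \(\Sigma_g\) and do not contribute to \(W\).
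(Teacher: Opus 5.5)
Your proposal is correct and is essentially the paper's argument. The paper presents this proposition without separate proof, as a linear-algebraic restatement of Proposition~\ref{prop:key-relation-k}: the \(U_j\) are realized by outermost genus-\(1\) curves \(\delta_j\), and the \(V_\ell\) by genus-\(1\) separating curves \(\theta_\ell\) in the complementary surface \(S\) of genus \(g-k+1 \geq 3\). You have simply supplied the realization details the paper leaves implicit (surjectivity of mapping classes supported on \(S\) onto \(\Sp(W)\), transitivity on rank-\(2\) symplectic submodules, and that \(\theta_\ell, \delta_1, \dots, \delta_{k-1}\) form an admissible partition with all curves outermost).
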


\begin{corollary} \label{cor:change-fst}
    Let \(g \geq 4\) and \(2 \leq k \leq g-2\). Suppose that \(V, U, W_1, \dots,
    W_{k-1} \subset H_1(\Sigma_g)\) are rank-\(2\) symplectic submodules such
    that:
    \begin{enumerate}
        \item \(W_i \perp W_j\) for all \(i \neq j\);
        \item \(V, U \subset (W_1 \oplus \dots \oplus W_{k-1})^{\perp}\);
        \item \(\bV = \bU\) (that is \(\bmod 2\) reductions of \(V\) and \(U\)
        coincide).
    \end{enumerate}
    Then
    \[
    \A(V, W_1, \dots, W_{k-1}) = \A(U, W_1, \dots, W_{k-1})
    .\] 
\end{corollary}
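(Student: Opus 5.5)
Apply Proposition~\ref{prop:key-rel-k-lin-alg} with \(n = 2\), \(V_1 = V\), \(V_2 = U\) and \(U_j = W_j\) for \(j = 1, \dots, k-1\). The hypotheses of the proposition hold: the \(W_j\) are pairwise orthogonal rank-\(2\) symplectic submodules by assumption (1), and \(V, U \perp W_j\) for every \(j\) by assumption (2). It is therefore enough to check the first condition of the proposition:
\[
\sigma(V) + \sigma(U) \in \langle \sigma(W_1), \dots, \sigma(W_{k-1}) \rangle .
\]

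By the construction in Section~\ref{sec:preliminaries}, \(\sigma(V)\) depends only on the reduction \(\bV\), so \(\sigma(V) = \sigma(\bV)\) and \(\sigma(U) = \sigma(\bU)\). Assumption (3) gives \(\bV = \bU\), so \(\sigma(V) = \sigma(U)\). Since \(\B_2'\) is a \(\Z/2\Z\)-vector space, \(\sigma(V) + \sigma(U) = 2\sigma(V) = 0\), which lies in the span trivially.

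The proposition then yields
\[
\A(V, W_1, \dots, W_{k-1}) + \A(U, W_1, \dots, W_{k-1}) = 0
\]
in \(H_k(\I_g)\). To turn this sum into the claimed equality, I use that each of the two cycles has order dividing \(2\). This follows from fact~(i) recalled in the introduction (\(H_k^{\mathrm{ab,sep}}(\I_g)\) is a \(\Z/2\Z\)-vector space for \(k \geq 2\), \(g \geq 3\)). Alternatively, it follows from Proposition~\ref{prop:key-rel-k-lin-alg} itself with \(n = 2\) and \(V_1 = V_2 = V\), since \(2\sigma(V) = 0\). Hence \(\A(U, W_1, \dots, W_{k-1}) = -\A(U, W_1, \dots, W_{k-1})\), and the displayed relation gives \(\A(V, W_1, \dots, W_{k-1}) = \A(U, W_1, \dots, W_{k-1})\).

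The only point needing care is that the abelian cycles are well defined in terms of submodules. This is the earlier lemma: a collection of orthogonal rank-\(2\) symplectic submodules is realized by outermost genus-\(1\) separating curves, and the resulting cycle does not depend on the choice of curves.
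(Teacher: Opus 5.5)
Your proposal is correct and follows the paper's own argument. The paper also observes that \(\bV = \bU\) forces \(\sigma(V) = \sigma(U)\) and then invokes Proposition~\ref{prop:key-rel-k-lin-alg}. You additionally make explicit the choice \(n = 2\) and the order-\(2\) step that turns the vanishing sum into an equality, both of which the paper leaves implicit.
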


\begin{proof}
    Since \(\bU = \bV\), we have \(\sigma(V) = \sigma(U)\). The corollary is
    then a direct consequence of Proposition~\ref{prop:key-rel-k-lin-alg}.
\end{proof}

    \section{A general equality criterion for abelian cycles}
\label{sec:sigma-k-ab-cyc}

The primary goal of this section is to establish
Proposition~\ref{prop:inj-k-2}, which generalizes
\cite[Proposition~7.1]{vladimirov}. The proof relies on a key technical
result, Lemma~\ref{lem:p-set-1-subspace}. In this section, we state this lemma
and use it to prove Proposition~\ref{prop:inj-k-2}, deferring its proof to
Section~\ref{sec:proof-of-auxiliary-lemma}.

Recall that we use the following convention: for symplectic
submodules \(V, U, \dots \) of \(H_1(\Sigma_g)\), their reductions modulo
\(2\) are denoted by \(\bV, \bU, \dots\).

\begin{proposition} \label{prop:inj-k-2}
    Let \(g \geq 4\) and \(2 \leq k \leq g-2\). Suppose that \(U_1, \dots, U_k,
    V_1, \dots, V_k \subset H_1(\Sigma_g)\) are rank-\(2\) symplectic submodules such that:
    \begin{enumerate}
        \item \(U_i \perp U_j\) for all \(i \neq j\);
        \item \(V_i \perp V_j\) for all \(i \neq j\);
        \item \(\bU_i = \bV_i\) for all \(i = 1, \dots, k\).
    \end{enumerate}
    Then 
    \[
    \A(V_1, \dots, V_k) = \A(U_1, \dots, U_k)
    .\] 
\end{proposition}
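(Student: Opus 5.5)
The plan is to replace the $U_i$ by the $V_i$ one at a time, using Corollary~\ref{cor:change-fst} at each step. Since abelian cycles are antisymmetric in their entries (property (II)), we may put any entry in the first slot. The basic move is then the following. If $X, Y$ are rank-$2$ symplectic submodules with $\bX = \bY$, and both are orthogonal to pairwise orthogonal rank-$2$ submodules $W_1, \dots, W_{k-1}$, then $X$ may be replaced by $Y$ in $\A(X, W_1, \dots, W_{k-1})$.

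The obstruction is that $V_1$ need not be orthogonal to $U_2, \dots, U_k$ over $\Z$; only the reductions $\bV_1 = \bU_1$ are orthogonal to $\bU_2, \dots, \bU_k$. So we should not swap directly, but pass through intermediate configurations. The auxiliary statement we aim for is Lemma~\ref{lem:p-set-1-subspace}, which I expect to say the following. Suppose we are given pairwise orthogonal rank-$2$ symplectic submodules $W_2, \dots, W_k$ and a rank-$2$ symplectic subspace $\bX \perp \bW_2, \dots, \bW_k$. Then we can connect any two integral lifts of this mod-$2$ data by a chain of moves. Each move changes one submodule at a time, keeps its mod-$2$ reduction fixed, and keeps it integrally orthogonal to the others.

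Concretely, the plan is an induction on the number $m$ of indices with $U_i \neq V_i$; the case $m = 0$ is trivial. In the inductive step I would find intermediate pairwise orthogonal submodules $W_1, \dots, W_k$ with $\bW_i = \bU_i$ such that
\[
\A(U_1, \dots, U_k) = \A(W_1, \dots, W_k),
\]
and such that $W_k = V_k$ up to one more move, with $W_1, \dots, W_{k-1}$ lying in $V_k^{\perp}$. To build them, note that $\bV_k^{\perp}$ is a symplectic subspace of dimension $2g-2$. The condition $k \le g-2$ guarantees room: the complement of $V_k \oplus (\text{the others})$ still has rank at least $4$. Choose an integral symplectic splitting
\[
H_1(\Sigma_g) = V_k \oplus V_k^{\perp}.
\]
By surjectivity of $\Sp_{2g}(\Z) \to \Sp_{2g}(\Z/2\Z)$, applied within $V_k^{\perp}$, lift the mod-$2$ orthogonal family $\bU_1, \dots, \bU_{k-1}$ to pairwise orthogonal integral submodules $W_1', \dots, W_{k-1}' \subset V_k^{\perp}$. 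Then
\[
\A(W_1', \dots, W_{k-1}', U_k) = \A(W_1', \dots, W_{k-1}', V_k)
\]
holds provided we first arrange $U_k \perp W_j'$. Once this is done, the remaining data $(W_1', \dots, W_{k-1}')$ versus $(V_1, \dots, V_{k-1})$ all live in $V_k^{\perp}$ with the last entry fixed. This is the situation for $k-1$ entries on a surface of genus $g-1$, and we conclude by induction on $k$. The base of this induction is Corollary~\ref{cor:change-fst} itself.

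The main obstacle is connecting $(U_1, \dots, U_k)$ to a configuration containing the $W_j'$ with the same $U_k$. This requires a connectivity statement: the graph whose vertices are integral orthogonal lifts of a fixed mod-$2$ orthogonal family, with edges given by single-entry changes preserving orthogonality, is connected. I would try to prove this using the transitive action of the level-$2$ congruence subgroup $\Sp_{2g}(\Z)[2]$ on such lifts. Note that $\I_g$ acts trivially on $H_1$, so this action is not automatic. Instead, I would use generators of $\Sp_{2g}(\Z)[2]$, namely squares of transvections. Each such generator moves only the entries it does not fix, and one checks that it can be realized by a short chain of single-entry moves. The spare rank-$4$ room coming from $k \le g-2$ is what allows each intermediate submodule to be chosen orthogonal to all the others. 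This is exactly the content I expect to be deferred to Lemma~\ref{lem:p-set-1-subspace}.
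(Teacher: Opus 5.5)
There is a genuine gap. The connectivity statement you defer everything to is false, so no chain of single-entry exchanges (Corollary~\ref{cor:change-fst}) can prove the proposition.

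\textbf{The invariant.} Single-entry moves preserve a mod-\(4\) invariant. Take two pairwise orthogonal lifts \((U_1,\dots,U_k)\) and \((U_1',\dots,U_k')\) of the same mod-\(2\) data, and choose \(\gamma = I+2X \in \Sp_{2g}(\Z)\) with \(\gamma(U_i)=U_i'\).
\begin{itemize}
\item The form \(B_\gamma(x,y)= x\cdot Xy \bmod 2\) is symmetric, and \(\gamma\mapsto B_\gamma\) is a homomorphism from the level-\(2\) subgroup to an abelian group.
\item Any element of that subgroup preserving every \(U_i\) has \(B\) vanishing on \(\bU_i\times\bU_j\) for \(i\neq j\).
\item The same holds for any element realizing a single-entry move, since such an element may be taken to fix the other entries pointwise.
\end{itemize}
Hence the off-diagonal blocks of \(B_\gamma\) are well defined and constant on each connected component of your graph.

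\textbf{A counterexample.} Take \(U_1=\langle a_1,b_1\rangle\), \(U_2=\langle a_2,b_2\rangle\) and \(v=a_1+a_2\). Then \(T_v^2\) sends these to \(V_1=\langle a_1,\,b_1-2a_2\rangle\) and \(V_2=\langle a_2,\,b_2-2a_1\rangle\). These are orthogonal and satisfy \(\bV_i=\bU_i\), but \(B(b_1,b_2)=1\). So \((V_1,V_2,U_3,\dots,U_k)\) is not reachable from \((U_1,\dots,U_k)\) by single-entry moves. Your ``one checks'' for the generators fails exactly for squares of transvections along vectors that pair oddly with two different entries.

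Your induction on \(k\) does not rescue this. Its base is one free entry, and the passage to two free entries (``first arrange \(U_k\perp W_j'\)'') is precisely where the invariant obstructs. Already for \(k=2\) the statement is a two-entry exchange of exactly this kind.

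\textbf{What is missing.} You need a genuine two-entry exchange, Lemma~\ref{lem:change-2-spaces}: \(\A(X_1,X_2,U_1,\dots,U_{k-2})=\A(Y_1,Y_2,U_1,\dots,U_{k-2})\) whenever \(X_1\perp X_2\), \(Y_1\perp Y_2\) and \(\bX_i=\bY_i\), all inside \((U_1\oplus\dots\oplus U_{k-2})^{\perp}\). The paper imports this from the \(k=2\) case \cite[Proposition~7.1]{vladimirov}, carried out inside that complement. By the invariant above, that input cannot come from single-entry exchanges alone.

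With both kinds of move available, the paper needs no connectivity theorem and instead works by explicit basis manipulations:
\begin{itemize}
\item Lemma~\ref{lem:set-x-fst} pulls a primitive \(x\in V_1\) into \(U_1\). It replaces \(a_1\) by \((2\alpha_1+1)a_1+2\alpha_\ell a_\ell\) and compensates in \(U_\ell\). This is an off-diagonal change, so it requires Lemma~\ref{lem:change-2-spaces}.
\item Lemma~\ref{lem:set-v-fst} then makes \(V_1\) itself an entry, with the remaining entries in \(V_1^{\perp}\). This gives Lemma~\ref{lem:p-set-1-subspace}.
\item Lemma~\ref{lem:m-inj-k-2} swaps in \(V_1,\dots,V_k\) one at a time.
\end{itemize}
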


Now Theorem~\ref{mainthm:fin-dim} follows immediately from
Proposition~\ref{prop:inj-k-2}.

\begin{proof}[Proof of Theorem~\ref{mainthm:fin-dim}]
    By the linear-algebraic formulation of \cite[Theorem~1.1]{vladimirov}, the
    \(\Z/2\Z\)-vector space \(H_k^{\mathrm{ab,sep}}(\I_g)\) is generated by
    the abelian cycles \(\A(U_1, \dots, U_k)\) where \(U_1, \dots, U_k \subset
    H_1(\Sigma_g)\) are rank-\(2\) symplectic submodules. Since there are only
    finitely many \(2\)-dimensional subspaces in \(H_1(\Sigma_g;
    \Z/2\Z)\), Proposition~\ref{prop:inj-k-2} ensures that the total number
    of distinct abelian cycles \(\A(U_1, \dots, U_k)\) is finite.
\end{proof}

To prove Proposition~\ref{prop:inj-k-2}, we will need the following
auxiliary Lemma~\ref{lem:p-set-1-subspace}.

\begin{lemma} \label{lem:p-set-1-subspace}
    Let \(g \geq 4\) and \(2 \leq k \leq g-2\). Suppose that \(X_1, \dots,
    X_p, U_1, \dots, U_{k-p},
    V_1 \subset H_1(\Sigma_g)\) are rank-\(2\) symplectic submodules such that
    \begin{enumerate}
        \item \(X_i \perp X_j\) and \(U_i \perp U_j\) for all \(i \neq j\);
        \item \(X_i \perp V_1\) and \(X_i \perp U_{\ell}\) for all \(i, \ell\);
        \item \(\bU_1 = \bV_1\).
    \end{enumerate}
    Then there exist rank-\(2\) symplectic submodules \(W_2, \dots, W_{k-p}
    \subset \left( V_1 \oplus X_1 \oplus \dots \oplus X_p \right)^{\perp}\)
    such that 
    \begin{itemize}
        \item \(W_i \perp W_j\) for all \(i \neq j\);
        \item \(\bW_i = \bU_i\) for \(i = 2, \dots, k-p\);
        \item \(\A(X_1, \dots, X_p, V_1, W_2, \dots, W_{k-p}) = \A(X_1, \dots,
        X_p, U_1, \dots, U_{k-p})\).
    \end{itemize}
\end{lemma}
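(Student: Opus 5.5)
Set \(X = X_1 \oplus \dots \oplus X_p\), a symplectic submodule of rank \(2p\), and work entirely inside \(X^{\perp}\). This is a symplectic submodule of rank \(2(g-p)\), and it contains \(U_1, \dots, U_{k-p}\) and \(V_1\). The main tool is Corollary~\ref{cor:change-fst}, applied with the \(X_i\) among the fixed entries. It lets us replace one entry \(U\) of an abelian cycle by any \(U'\) with \(\bU' = \bU\), provided \(U'\) and \(U\) are both orthogonal to all the other entries.

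The plan is an exchange argument. We want to replace \(U_1\) by \(V_1\). The obstacle is that \(V_1\) need not be orthogonal to \(U_2, \dots, U_{k-p}\), so we first move \(U_2, \dots, U_{k-p}\) into \(V_1^{\perp}\) while keeping their mod-\(2\) reductions. Set \(M = X^{\perp} \cap (U_1 + V_1)^{\perp}\). We will construct pairwise orthogonal rank-\(2\) symplectic submodules \(W_2, \dots, W_{k-p} \subset M \cap (U_1 \oplus V_1)^{\perp}\) with \(\bW_i = \bU_i\). This needs a small lemma: since \(\bU_1 = \bV_1\), the reductions of \(U_1 + V_1\) modulo \(2\) collapse to \(\bU_1\), which is orthogonal to \(\bU_i\) for \(i \geq 2\). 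Then by a lifting statement for symplectic bases over \(\Z\) (surjectivity of \(\Sp_{2n}(\Z) \to \Sp_{2n}(\Z/2\Z)\), applied to the orthogonal complement of the saturation of \(U_1+V_1\), or of a rank-\(\le 4\) symplectic submodule containing it), the mod-\(2\) subspaces \(\bU_2, \dots, \bU_{k-p}\) can be lifted to pairwise orthogonal symplectic submodules orthogonal to \(X\), \(U_1\) and \(V_1\). The rank count is fine because the orthogonal complement of \(X\) and a rank-\(4\) submodule has rank \(2(g-p-2) \geq 2(k-p-1)\), using \(k \leq g-2\). If \(U_1 + V_1\) is not contained in a rank-\(4\) symplectic submodule compatible with this, first perform an intermediate replacement \(V_1 \to V'\) with \(\bV' = \bU_1\) and \(V'\) chosen well. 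Making this lifting precise is the main obstacle, and I would handle it by choosing a symplectic basis of \(X^{\perp}\) adapted to \(U_1\) and lifting via \(\Sp(\Z) \to \Sp(\Z/2\Z)\).

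Given the \(W_i\), the cycle is changed one entry at a time. First replace \(U_2\) by \(W_2\) using Corollary~\ref{cor:change-fst}. The other entries are \(X_1, \dots, X_p, U_1, U_3, \dots\), and both \(U_2\) and \(W_2\) must be orthogonal to all of them. This fails when \(W_2\) is not orthogonal to \(U_3, \dots\). To handle this, choose the \(W_i\) inductively so that \(W_i\) is orthogonal to \(U_j\) for \(j > i\) as well, by lifting inside \((X + U_1 + V_1 + W_2 + \dots + W_{i-1} + U_{i+1} + \dots + U_{k-p})^{\perp}\). The rank count for this complement is \(2g - 2p - 4 - 2(k-p-2) = 2(g-k) \geq 4\), so it has room for a rank-\(2\) lift with reduction \(\bU_i\). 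After all the \(U_i\), \(i \geq 2\), are replaced, the remaining entries \(X_j, W_2, \dots, W_{k-p}\) are all orthogonal to both \(U_1\) and \(V_1\). A final application of Corollary~\ref{cor:change-fst} then swaps \(U_1\) for \(V_1\), giving the required equality. Each step uses \(g \geq 4\) and \(2 \le k \le g-2\) only through these rank counts and the hypotheses of Corollary~\ref{cor:change-fst}.
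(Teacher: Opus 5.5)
There is a genuine gap, at exactly the step you flag as the main obstacle. Your scheme needs rank-\(2\) symplectic submodules \(W_i\) with \(\bW_i=\bU_i\) that are orthogonal to \emph{both} \(U_1\) and \(V_1\). This is what makes the final one-entry swap \(U_1\to V_1\) via Corollary~\ref{cor:change-fst} legal. Such \(W_i\) need not exist, and the obstruction is arithmetic, not a rank count.

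The reason is as follows. Since \((U_1+V_1)^{\perp}\) is saturated, its reduction mod \(2\) has dimension \(2g-\operatorname{rank}(U_1+V_1)\le 2g-3\) whenever \(V_1\neq U_1\). So it is a proper subspace of \(\bU_1^{\perp}\), and knowing \(\bU_i\perp\bU_1\) does not let you lift \(\bU_i\) into it.

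Concretely, take a symplectic basis \(\{a_j,b_j\}\) of \(H_1(\Sigma_g)\), \(p=0\), \(k=2\), and
\(U_1=\langle a_1,b_1\rangle\), \(U_2=\langle a_2,b_2\rangle\), \(V_1=\langle a_1+2a_2,b_1\rangle\).
Then \((U_1+V_1)^{\perp}=\langle a_1,b_1,a_2\rangle^{\perp}=\langle a_2,a_3,b_3,\dots,a_g,b_g\rangle\), whose reduction does not contain \(\bb_2\). So no \(W_2\) orthogonal to both \(U_1\) and \(V_1\) has \(\bW_2=\bU_2\).

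Passing to a rank-\(4\) symplectic \(R\supset U_1+V_1\) does not help. \(R\) is a direct summand containing \(2a_2\), hence \(a_2\), so its reduction is a symplectic subspace containing \(\ba_2\). It therefore cannot be orthogonal to \(\bU_2\ni\ba_2\).

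In this example neither order of your two one-entry swaps works:
\begin{itemize}
\item \(U_1\to V_1\) is illegal, since \(V_1\not\perp U_2\).
\item Every \(W_2\subset V_1^{\perp}\) with \(\bW_2=\bU_2\) (e.g.\ \(\langle a_2,b_2-2b_1\rangle\)) fails to be orthogonal to \(U_1\).
\end{itemize}
Your fallback of first replacing \(V_1\) by a well-chosen \(V'\) only moves the same problem to the pair \((V',V_1)\), since the conclusion must contain \(V_1\) itself.

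The missing ingredient is a move that changes two entries at once. The paper supplies it as Lemma~\ref{lem:change-2-spaces}: \(\A(X_1,X_2,\dots)=\A(Y_1,Y_2,\dots)\) with the remaining \(k-2\) entries fixed, whenever \(X_1\perp X_2\), \(Y_1\perp Y_2\), \(\bX_i=\bY_i\), and all four are orthogonal to the fixed entries. It is proved by running the argument of Proposition~7.1 of the author's earlier paper inside the orthogonal complement of the fixed entries.

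The paper uses it to replace a pair \((U_1,U_\ell)\) by \((U_1',U_\ell')\) with \(U_1'\oplus U_\ell'=U_1\oplus U_\ell\), in two stages:
\begin{itemize}
\item first to bring a primitive \(x\in V_1\) into the first entry (Lemma~\ref{lem:set-x-fst});
\item then to adjust the complementary basis vector (Lemma~\ref{lem:set-v-fst}).
\end{itemize}
At the end the first entry is exactly \(V_1\), and the \(W_i\) are the modified \(U_i\). In the example this is the single exchange \((U_1,U_2)\to(V_1,\langle a_2,b_2-2b_1\rangle)\), where both pairs span \(\langle a_1,b_1,a_2,b_2\rangle\).

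The resulting \(W_i\) are orthogonal to \(V_1\) but in general not to the original \(U_1\). That is all the lemma asks for, and it is precisely what one-entry swaps combined with direct lifting cannot produce unless you first prove such a pair exchange.
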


Postponing the proof of Lemma~\ref{lem:p-set-1-subspace} to
Section~\ref{sec:proof-of-auxiliary-lemma}, we now demonstrate how to derive
Proposition~\ref{prop:inj-k-2}. To cleanly isolate the inductive machinery and
streamline the exposition, we first establish the following intermediate
lemma, which encapsulates the core inductive step required for the
proposition.

\begin{lemma}\label{lem:m-inj-k-2}
    Let \(g \geq 4\) and \(2 \leq k \leq g-2\). For an integer \(1 \leq n
    \leq k\) suppose that \(U_1, \dots, U_k,
    V_1, \dots, V_n \subset H_1(\Sigma_g)\) are rank-\(2\) symplectic submodules such that:
    \begin{enumerate}
        \item \(U_i \perp U_j\) for all \(i \neq j\);
        \item \(V_i \perp V_j\) for all \(i \neq j\);
        \item \(V_i \perp U_j\) for \(i = 1, \dots, n\) and \(j = n+1, \dots,
        k\);
        \item \(\bU_i = \bV_i\) for \(i = 1, \dots, n\).
    \end{enumerate}
    Then 
    \[
    \A(V_1, \dots, V_n, U_{n+1}, \dots, U_k) = \A(U_1, \dots, U_k)
    .\] 
\end{lemma}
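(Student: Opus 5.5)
We will prove Lemma~\ref{lem:m-inj-k-2} by induction on \(n\), using Lemma~\ref{lem:p-set-1-subspace} to replace one submodule at a time. The idea is to keep \(V_1,\dots,V_{n-1}\) fixed in the first slots as the ``\(X\)'' submodules and apply Lemma~\ref{lem:p-set-1-subspace} to swap the \(n\)-th slot from \(U_n\) to \(V_n\). This may disturb the remaining slots \(U_{n+1},\dots,U_k\), but only within their mod~\(2\) class.

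\emph{Base case \(n=1\).} The \(U_1,\dots,U_k\) are pairwise orthogonal and \(V_1\perp U_j\) for \(j\ge 2\). Hence \(V_1, U_1 \subset (U_2\oplus\dots\oplus U_k)^{\perp}\), and \(\bV_1=\bU_1\). Corollary~\ref{cor:change-fst}, applied with \(W_i=U_{i+1}\), gives \(\A(V_1,U_2,\dots,U_k)=\A(U_1,\dots,U_k)\) directly, after a permutation of entries, which changes only a sign; signs are harmless here since these classes have order \(2\), or one may simply reorder consistently.

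\emph{Inductive step.} It is cleaner to strengthen the statement. We prove by induction on \(n\) that for any \(U\)'s and \(V\)'s satisfying (1),(2),(4) with \(n\) replaced, we get \(\A(V_1,\dots,V_n,U'_{n+1},\dots,U'_k)=\A(U_1,\dots,U_k)\) for every admissible choice of tail. Concretely, assume the lemma for \(n-1\) (applied to arbitrary data). Given the data for \(n\), note that hypothesis (3) for \(n\) implies hypothesis (3) for \(n-1\) except for the pair \((V_{n-1}\text{-side}, U_n)\): we need \(V_i\perp U_n\) for \(i\le n-1\), which may fail.

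To avoid this, we work in the reverse direction. Set \(X_i=V_i\) for \(i\le n-1\) and \(p=n-1\). Start from \(\A(U_1,\dots,U_k)\) and first change \(U_1,\dots,U_{n-1}\) to \(V_1,\dots,V_{n-1}\) step by step, each time via Lemma~\ref{lem:p-set-1-subspace}. At stage \(p\) we have \(\A(V_1,\dots,V_p,U^{(p)}_{p+1},\dots,U^{(p)}_k)\) with \(\bU^{(p)}_i=\bU_i\), the \(U^{(p)}\)'s pairwise orthogonal and orthogonal to \(V_1,\dots,V_p\). We apply Lemma~\ref{lem:p-set-1-subspace} with \(X_i=V_i\) (\(i\le p\)), the \(U\)'s being \(U^{(p)}_{p+1},\dots\), and \(V_1\) replaced by \(V_{p+1}\). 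Its hypotheses require \(V_{p+1}\perp V_i\), given by (2), and \(\bV_{p+1}=\bU^{(p)}_{p+1}\), which follows from (4). The lemma produces \(W\)'s orthogonal to \(V_1,\dots,V_{p+1}\) with the same mod~\(2\) reductions, giving stage \(p+1\). After \(n\) stages we reach \(\A(V_1,\dots,V_n,U^{(n)}_{n+1},\dots,U^{(n)}_k)\), where \(\bU^{(n)}_j=\bU_j\) and everything is orthogonal to \(V_1,\dots,V_n\).

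It remains to replace \(U^{(n)}_j\) by \(U_j\) for \(j>n\). By (3), the original \(U_j\) are also orthogonal to all \(V_i\). So we swap them one slot at a time with Corollary~\ref{cor:change-fst}: both \(U_j\) and \(U^{(n)}_j\) lie in the orthogonal complement of \(V_1,\dots,V_n\) together with the other current tail entries. Here we must ensure the mixed family stays pairwise orthogonal, since \(U_j\) need not be orthogonal to \(U^{(n)}_{j'}\). This is the main obstacle. One way to handle it is to invoke Lemma~\ref{lem:p-set-1-subspace} again with \(X\)'s equal to \(V_1,\dots,V_n,U_{n+1},\dots,U_{j-1}\) and \(V_1:=U_j\), replacing the old tail by new \(W\)'s orthogonal to everything. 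Alternatively, the paper's proof of Proposition~\ref{prop:inj-k-2} may need only the non-canonical tail, in which case the lemma should be read as allowing this. In either case, the bound \(k\le g-2\) guarantees that the orthogonal complements are large enough for Lemma~\ref{lem:p-set-1-subspace} to apply at each step.
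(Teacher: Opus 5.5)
Your proposal is correct and follows the same route as the paper: apply Lemma~\ref{lem:p-set-1-subspace} repeatedly, with the already-placed \(V_1,\dots,V_p\) playing the role of the \(X\)'s, to swap one slot at a time (your separate base case via Corollary~\ref{cor:change-fst} is valid, though the stage argument does not need it). Your explicit tracking of the tail, together with the restoration step (which amounts to continuing the same iteration with target family \((V_1,\dots,V_n,U_{n+1},\dots,U_k)\), pairwise orthogonal by (1)--(3)), is more careful than the paper's induction step: that step treats the tail returned by Lemma~\ref{lem:p-set-1-subspace} as the original \(U_{\ell+1},\dots,U_k\), and it passes through \(\A(V_1,\dots,V_{\ell-1},U_\ell,\dots,U_k)\) even though \(V_i\perp U_\ell\) is never assumed; so your first option already closes the argument and the ``alternatively'' hedge can be dropped.
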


\begin{proof}
    We prove the lemma by induction on \(n\).

    \emph{Base case.} For \(n = 1\), the statement follows from
    Lemma~\ref{lem:p-set-1-subspace} with \(p = 0\).

    \emph{Induction step.} Assume that the statement holds for all \(n
    \leq \ell-1\), and consider the case \(n = \ell\). This means that
    \[
    \A(V_1, \dots, V_{\ell-1}, U_{\ell}, \dots, U_k) = \A(U_1,
    \dots, U_k)
    ,\] 
    whenever \(\bV_i = \bU_i\) for \(i = 1, \dots, \ell-1\).

    Applying Lemma~\ref{lem:p-set-1-subspace} with \(n = \ell-1\) to the
    abelian cycle \(\A(V_1, \dots, V_{\ell-1}, U_{\ell}, \dots, U_k)\) yields
    \[
    \A(V_1, \dots, V_{\ell-1}, U_{\ell}, \dots, U_k) =
    \A(V_1, \dots, V_{\ell-1}, V_{\ell}, U_{\ell+1}, \dots, U_k)
    ,\]
    whenever \(\bV_{\ell} = \bU_{\ell}\). The lemma follows.
\end{proof}

Proposition~\ref{prop:inj-k-2} follows immediately from
Lemma~\ref{lem:m-inj-k-2} by taking \(n = k\).

    \section{Proof of Lemma~\ref{lem:p-set-1-subspace}}
\label{sec:proof-of-auxiliary-lemma}

In this section, we prove Lemma~\ref{lem:p-set-1-subspace}. The argument
concentrates on the core case \(p = 0\), as the general case \(p > 0\) reduces
to the same machinery within an orthogonal complement.

\proofoutline{Lemma~\ref{lem:p-set-1-subspace}}
The key idea for \(p = 0\) is to replace a component \(U_1\)
with \(V_1\) (where \(\bU_1 = \bV_1\)). We achieve this in two stages. First,
we apply Lemma~\ref{lem:set-x-fst} to bring a chosen primitive element \(x \in
V_1\) into \(U_1\). Second, assuming \(x \in U_1\), we adjust the remaining
components of the abelian cycle until \(U_1 = V_1\)
(see Lemma~\ref{lem:set-v-fst}).
Both of these steps are driven by Lemma~\ref{lem:change-2-spaces}, which
allows us to simultaneously modify pairs of components in an abelian cycle
while keeping the other components fixed.

\begin{lemma} \label{lem:set-v-fst}
    Let \(g \geq 4\) and \(2 \leq k \leq g-2\). Suppose that \(V_1, U_1, \dots,
    U_k \subset H_1(\Sigma_g)\) are rank-\(2\) symplectic submodules with
    \(\bV_1 = \bU_1\), and let \(x \in V_1\) be a primitive element. Then for
    any integer \(1 \leq m \leq k\) there exist rank-\(2\) symplectic 
    submodules \(\widetilde{U}_1, \dots, \widetilde{U}_k \subset
    H_1(\Sigma_g)\) such that
    \begin{enumerate}
        \item \(\A(\widetilde{U}_1, \dots, \widetilde{U}_k) = \A(U_1, \dots,
        U_k)\);
        \item \(\widetilde{\bU}_i = \bU_i\) for all \(i = 1, \dots, k\);
        \item \(x \in V_1 \cap \widetilde{U}_1\);
        \item \(V_1 \subset \widetilde{U}_1 \oplus \dots \oplus \widetilde{U}_m\).
    \end{enumerate}
\end{lemma}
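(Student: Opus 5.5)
The plan rests on the not-yet-stated Lemma~\ref{lem:change-2-spaces}. As the outline indicates, it lets us replace a pair \((U_i,U_j)\) in an abelian cycle by a pair \((U_i',U_j')\) with \(U_i'\oplus U_j' = U_i\oplus U_j\) and the same mod~\(2\) reductions, keeping the other components fixed. Its engine is Corollary~\ref{cor:change-fst} applied inside the rank-\(4\) summand. Throughout we use that \(U_1,\dots,U_k\) are pairwise orthogonal, so \(H_1(\Sigma_g)=U_1\oplus\dots\oplus U_k\oplus U\) with \(\operatorname{rank} U = 2(g-k)\geq 4\).

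\emph{Reduction to \(x\in U_1\).} I would first arrange \(x\in\widetilde U_1\). Since \(\bV_1=\bU_1\), we have \(x \equiv u \pmod 2\) for some primitive \(u\in U_1\), so \(x-u\in 2H_1(\Sigma_g)\). Write \(x=\sum_i x_i + y\) in the splitting, with \(x_i\in U_i\) and \(y\in U\). Then \(x_i\in 2U_i\) for \(i\ge 2\), and \(y\in 2U\).

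The simplest case is \(x_i=0\) for \(i\ge 2\). Replace \(U_1\) by the rank-\(2\) symplectic submodule \(U_1'\subset U_1\oplus U\) that contains \(x\) and reduces to \(\bU_1\). It exists because \(y\) is even, so a transvection-type change of basis in \(U_1\oplus U\) works. Corollary~\ref{cor:change-fst}, with \(W_j=U_j\) for \(j\ge 2\), gives the equality of abelian cycles.

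In general the components \(x_i\) for \(i\geq 2\) must be absorbed first. For each \(i\), apply the pair-changing lemma to \((U_1,U_i)\), or route through \(U\) via Corollary~\ref{cor:change-fst}. The goal is to move the even vector \(x_i\) into the first slot while preserving all reductions mod~\(2\). This is possible because adding even multiples of vectors of \(U_i\) to a basis of \(U_1\) does not change \(\bU_1\). After doing this for every \(i\), we get \(x\in\widetilde U_1\).

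\emph{Arranging \(V_1\subset\widetilde U_1\oplus\dots\oplus\widetilde U_m\).} Now \(x\in\widetilde U_1\). Choose \(v\in V_1\) with \(x\cdot v=1\), and write \(v=\sum_i v_i+w\) with \(v_i\in\widetilde U_i\) and \(w\in U\). The components \(v_i\) for \(i\ge 2\) are even, and so is \(w\): this holds modulo the choice of \(v\), since \(\bV_1=\widetilde{\bU}_1\). We must kill \(w\), the part outside the first \(m\) summands, together with the components \(v_i\) for \(i>m\).

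To remove \(w\), modify \(\widetilde U_1\) within \(\widetilde U_1\oplus U\) while keeping \(x\). Replace the partner \(u'\) of \(x\) in \(\widetilde U_1\) by \(u'+w+c\,x\) for a suitable correction \(c\), so that the new submodule is still symplectic, contains \(x\), and has the same reduction mod~\(2\) (because \(w\) is even). Corollary~\ref{cor:change-fst} applies since the new submodule is orthogonal to the other \(\widetilde U_j\). Symplecticity needs care: \(w\) may pair nontrivially with \(x\) only through the \(U\)-component of \(x\), which is now zero, so this works.

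To remove \(v_i\) for \(i>m\), use the pair-changing lemma on \((\widetilde U_1,\widetilde U_i)\): a symplectic automorphism of \(\widetilde U_1\oplus\widetilde U_i\) that is congruent to the identity mod~\(2\), fixes \(x\), and moves \(v_i\) into \(\widetilde U_1\). Such an automorphism exists, for example a product of transvections along even vectors orthogonal to \(x\). Since \(m\ge 1\), this places \(V_1\) inside \(\widetilde U_1\oplus\dots\oplus\widetilde U_m\).

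\emph{Main obstacle.} The hardest step is guaranteeing that every modification stays within what Corollary~\ref{cor:change-fst} or the pair-changing lemma allows. Each change must keep orthogonality to all the untouched components, preserve primitivity and unimodularity over \(\Z\) (not just mod~\(2\)), and keep \(x\) in the first slot. In particular, the required automorphisms of \(\widetilde U_1\oplus\widetilde U_i\) must be realized as elements of the level-\(2\) congruence subgroup of \(\Sp_4(\Z)\) fixing \(x\). I would verify this by writing such automorphisms explicitly as products of transvections \(z\mapsto z+ 2(z\cdot e)e\), using that level-\(2\) congruence subgroups are generated by squares of transvections. The hypothesis \(k\le g-2\), which gives \(\operatorname{rank}U\ge 4\), provides the room these moves need.
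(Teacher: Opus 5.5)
Your proposal is correct and follows the paper's route. You first arrange $x\in\widetilde U_1$ by absorbing the even components of $x$, using Lemma~\ref{lem:change-2-spaces} on the pairs $(U_1,U_i)$ and Corollary~\ref{cor:change-fst} inside $U_1\oplus U$; this is the paper's Lemma~\ref{lem:set-x-fst}. You then absorb the $U$-component of the partner $v$ via Corollary~\ref{cor:change-fst}, which is the paper's base case $m=k$, and each $v_i$ with $i>m$ via a level-$2$ pair change fixing $x$, which is its induction step.

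The two points you deferred are made explicit in the paper, and you should use its formulas:
\begin{itemize}
\item Primitivity is handled by replacing $x_1+x_\ell$ with its primitive part $a_1'=(2\alpha_1+1)a_1+2\alpha_\ell a_\ell$. One then sets $b_1'=(2\beta_1+1)b_1+2\beta_\ell b_\ell$ with $(2\alpha_1+1)(2\beta_1+1)+4\alpha_\ell\beta_\ell=1$.
\item The pair change fixing $x=a_1$ is simply $b_1\mapsto b_1+2\mu_\ell b_\ell$, $a_\ell\mapsto a_\ell-2\mu_\ell a_1$. It is not a product of transvections along even vectors as you suggest, and no appeal to generation of the level-$2$ subgroup by squares of transvections is needed.
\end{itemize}
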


\begin{proof}[Proof of Lemma~\ref{lem:p-set-1-subspace}]
    As noted above, the case when \(p = 0\) follows directly from
    Lemma~\ref{lem:set-v-fst} by setting \(m = 1\).
    For \(p > 0\), the argument proceeds identically,
    except that all considerations are carried out within the orthogonal
    complement \(\left( X_1 \oplus \dots \oplus X_p \right)^{\perp}\) rather
    than \(H_1(\Sigma_g)\).
\end{proof}

Now we prove Lemma~\ref{lem:set-v-fst}. We begin with
the following two auxiliary lemmas.

\begin{lemma} \label{lem:change-2-spaces}
    Let \(g \geq 4\) and \(2 \leq k \leq g-2\). Suppose that \(X_1, X_2, Y_1,
    Y_2, U_1, \dots, U_{k-2} \subset H_1(\Sigma_g)\) are rank-\(2\) symplectic
    submodules such that:
    \begin{enumerate}
        \item \(U_i \perp U_j\) for all \(i \neq j\);
        \item \(X_1, X_2, Y_1, Y_2 \subset \left( U_1 \oplus \dots \oplus
        U_{k-2} \right)^{\perp}\);
        \item \(X_1 \perp X_2\) and \(Y_1 \perp Y_2\);
        \item \(\bX_i = \bY_i\) for \(i = 1,2\).
    \end{enumerate}
    Then 
    \[
    \A(X_1, X_2, U_1, \dots, U_{k-2}) = \A(Y_1, Y_2, U_1, \dots, U_{k-2})
    .\] 
\end{lemma}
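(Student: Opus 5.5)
The plan is to pass from \((X_1, X_2)\) to \((Y_1, Y_2)\) by changing one component at a time, using Corollary~\ref{cor:change-fst}. Each single change requires the replaced component and its replacement to be orthogonal to all the other entries of the cycle. Directly we cannot replace \(X_1\) by \(Y_1\), since \(Y_1\) need not be orthogonal to \(X_2\). So we route through an intermediate pair \((Z_1, Z_2)\) with \(\bZ_i = \bX_i\) and chosen to be compatible with both pairs.

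Set \(W = (U_1 \oplus \dots \oplus U_{k-2})^{\perp}\). This is a symplectic submodule of rank \(2(g-k+2) \geq 8\), because \(k \leq g-2\). All of \(X_1, X_2, Y_1, Y_2\) lie in \(W\).

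First consider the special case where \(X_2 \perp Y_1\). Applying Corollary~\ref{cor:change-fst} with the remaining entries \(X_2, U_1, \dots, U_{k-2}\) replaces \(X_1\) by \(Y_1\). Applying it again with the remaining entries \(Y_1, U_1, \dots, U_{k-2}\) replaces \(X_2\) by \(Y_2\). Properties (I)--(II) handle reordering the entries, and the needed orthogonality conditions all hold by hypothesis together with \(X_2 \perp Y_1\).

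In general the argument is a chain of such steps. Choose a rank-\(2\) symplectic \(Z_2 \subset W\) with \(\bZ_2 = \bX_2\), with \(Z_2 \perp X_1\) and \(Z_2 \perp Y_1\), and such that its orthogonal complement in \(W\) still leaves room for the next step. The moves are then as follows.
\begin{enumerate}
  \item Replace \(X_2\) by \(Z_2\), which is valid since both are orthogonal to \(X_1\).
  \item Replace \(X_1\) by a lift \(Z_1\) of \(\bX_1\) inside \(Z_2^{\perp} \cap W\) with \(Z_1 \perp Y_2\).
  \item Replace \(Z_1\) by \(Y_1\).
  \item Replace \(Z_2\) by \(Y_2\).
\end{enumerate}
Each move must satisfy the orthogonality hypotheses of Corollary~\ref{cor:change-fst}.

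The main obstacle is the lifting problem. Given mod-\(2\) constraints \(\bZ = \bX\) and integral orthogonality constraints against up to two rank-\(2\) submodules, we must produce an integral symplectic lift. The intended tool is surjectivity of \(\Sp_{2n}(\Z) \to \Sp_{2n}(\Z/2\Z)\), together with its relative version for the level-\(2\) congruence subgroup acting transitively on lifts. Concretely, the constraint \(Z_2 \perp X_1, Y_1\) is compatible mod \(2\) because \(\bX_2 \perp \bX_1 = \bY_1\). We then work in \((X_1 + Y_1)^{\perp}\), which may fail to be symplectic.

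When it fails, I would first insert an extra intermediate step. Replace \(Y_1\) by some \(Y_1'\) with \(\bY_1' = \bY_1\) and \(Y_1' \perp Y_2\), chosen so that \(X_1 + Y_1'\) spans a symplectic submodule of rank at most \(4\) in which \(\bX_1\) and \(\bY_1\) coincide. The rank bound \(\operatorname{rank} W \geq 8\) is exactly what should leave enough room for these choices, so \(k \leq g-2\) enters precisely here.
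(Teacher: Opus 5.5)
\textbf{There is a genuine gap, and it cannot be repaired with single-entry swaps.}

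Your special case (\(X_2\perp Y_1\)) is fine. In the general chain, step (2) is superfluous: take \(Z_1=Y_1\). So everything rests on finding a rank-\(2\) symplectic \(Z_2\subset W\) that reduces to \(\bX_2\) and satisfies \(Z_2\perp X_1\) and \(Z_2\perp Y_1\). Such a \(Z_2\) need not exist. The mod-\(2\) compatibility \(\bX_2\perp\bX_1=\bY_1\) that you check is not the relevant condition. Integral orthogonality to \(X_1\) and \(Y_1\) forces \(Z_2\perp\tfrac12(y-x)\) for all \(x\in X_1\), \(y\in Y_1\) with \(x\equiv y \bmod 2\).

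Concretely, fix a symplectic basis \(a_1,b_1,\dots,a_g,b_g\) and take
\(U_j=\langle a_{j+2},b_{j+2}\rangle\), \(X_1=\langle a_1,b_1\rangle\), \(X_2=\langle a_2,b_2\rangle\), \(Y_1=\langle a_1+2a_2,\,-3b_1+2b_2\rangle\), \(Y_2=\langle 2a_1+3a_2,\,-2b_1+b_2\rangle\).
All four hypotheses of the lemma hold. But \(Z_2\perp X_1+Y_1\) forces \(Z_2\perp a_2\) and \(Z_2\perp b_2\). Hence the reduction of \(Z_2\) is orthogonal to \(\bX_2\) and cannot equal it. Here the saturation of \(X_1+Y_1\) is the symplectic submodule \(\langle a_1,b_1,a_2,b_2\rangle\). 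So the repair criterion in your last paragraph is already met, and it does not help.

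In fact no chain of applications of Corollary~\ref{cor:change-fst} can work, whatever intermediates you use, even if you also move the \(U_j\). For a rank-\(2\) symplectic \(C\) reducing to \(\bX_1\), define
\(\beta_C(\bx,\bz)=\tfrac12(c-x)\cdot z \bmod 2\) for \(\bx\in\bX_1\), \(\bz\in\bX_2\).
Here \(c\in C\) and \(x\in X_1\) both reduce to \(\bx\), and \(z\) is any integral lift of \(\bz\). This is a well-defined bilinear form, using \(C\cap 2H_1(\Sigma_g)=2C\) and \(\bX_1\perp\bX_2\).

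\begin{itemize}
\item A swap \(C\to C'\) of the entry reducing to \(\bX_1\) requires \(C,C'\perp B\), where \(B\) is the current entry reducing to \(\bX_2\). Evaluating with \(z\in B\) gives \(\beta_{C'}=\beta_C\).
\item Swaps of other entries, and reorderings, do not touch this entry.
\end{itemize}

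So \(\beta\) of the entry reducing to \(\bX_1\) is invariant under every single-entry swap. Yet \(\beta_{X_1}=0\), while \(\beta_{Y_1}(\ba_1,\bb_2)=a_2\cdot b_2=1\).

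The same computation rules out your \(Y_1'\) detour. The condition \(Y_1'\perp Y_2\) gives \(\beta_{Y_1'}=\beta_{Y_1}\neq 0\). On the other hand, a common orthogonal \(Z_2\) for \(X_1\) and \(Y_1'\) would force \(\beta_{Y_1'}=0\).

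The paper instead transplants the \(k=2\) argument of \cite[Proposition~7.1]{vladimirov} into \(W=(U_1\oplus\dots\oplus U_{k-2})^{\perp}\), starting from \(\sigma(X_i)=\sigma(Y_i)\). By the invariant above, any correct argument must use relations that are not single swaps. An example is the multi-term instances of Proposition~\ref{prop:key-rel-k-lin-alg} whose first entries have different mod-\(2\) reductions; for those, \(\beta\) is not defined.
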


\begin{proof}
    By Proposition~\ref{prop:mod-2-sigma}, we have \(\sigma(X_i) =
    \sigma(Y_i)\) for \(i = 1, 2\). The argument then proceeds step-by-step as
    in the proof of~\cite[Proposition~7.1]{vladimirov}, except that all
    considerations take place inside 
    \(\left( U_1 \oplus \dots \oplus U_{k-2} \right)^{\perp}\) rather than
    \(H_1(\Sigma_g)\).
\end{proof}

\begin{lemma} \label{lem:set-x-fst}
    Let \(g \geq 4\) and \(2 \leq k \leq g-2\). Suppose that \(U_1, \dots,
    U_k \subset H_1(\Sigma_g)\) are rank-\(2\) symplectic submodules and
    primitive \(x
    \in H_1(\Sigma_g)\) such that \(\bx \in \bU_1\).
    Then for any integer \(1 \leq m \leq k\) there exist rank-\(2\) symplectic 
    submodules \(\widetilde{U}_1, \dots, \widetilde{U}_k \subset
    H_1(\Sigma_g)\) such that 
    \begin{enumerate}
        \item \(\A(\widetilde{U}_1, \dots, \widetilde{U}_k) = \A(U_1, \dots,
        U_k)\);
        \item \(x \in \widetilde{U}_1 \oplus \dots \oplus \widetilde{U}_m\);
        \item \(\widetilde{\bU}_i = \bU_i\) for \(i = 1, \dots, k\).
    \end{enumerate}
\end{lemma}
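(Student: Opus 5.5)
The plan is to prove the statement for \(m=1\) only, since \(x \in \widetilde{U}_1\) already gives \(x \in \widetilde{U}_1 \oplus \dots \oplus \widetilde{U}_m\) for every \(m\). Put
\[
W = (U_1 \oplus \dots \oplus U_k)^{\perp}.
\]
Because \(k \leq g-2\), \(W\) is a symplectic submodule of rank at least \(4\). Decompose
\[
x = u_1 + u_2 + \dots + u_k + w, \qquad u_i \in U_i,\ w \in W.
\]
Since \(\bx \in \bU_1\), every \(u_i\) with \(i \geq 2\) and also \(w\) are divisible by \(2\). The only tools I will use are the moves that preserve the abelian cycle. Corollary~\ref{cor:change-fst} changes one summand \(U_i\) inside \((\bigoplus_{j\neq i}U_j)^{\perp}\) while keeping its reduction mod \(2\). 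Lemma~\ref{lem:change-2-spaces} changes a pair \((U_1,U_j)\) to an orthogonal pair \((Y_1,Y_j)\) inside \((\bigoplus_{i\neq 1,j}U_i)^{\perp} = U_1 \oplus U_j \oplus W\), again with \(\bY_1=\bU_1\) and \(\bY_j=\bU_j\). Every such move preserves condition~(3) automatically.

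The argument is an induction on the number of indices \(j \geq 2\) with \(u_j \neq 0\). For the inductive step, pick such a \(j\) and work in the unimodular lattice \(L_j = U_1 \oplus U_j \oplus W\), which has rank at least \(8\). Set
\[
y = u_1 + u_j + w \in L_j.
\]
I will look for an automorphism \(\phi\) of \(L_j\) in the level-\(2\) congruence subgroup \(\Sp(L_j)[2]\) with \(\phi(y) \in U_1 \oplus W\). Then \(Y_1 = \phi^{-1}(U_1)\) and \(Y_j = \phi^{-1}(U_j)\) are orthogonal rank-\(2\) symplectic submodules with the same reductions mod \(2\), because \(\phi \equiv \mathrm{id} \pmod 2\). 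Lemma~\ref{lem:change-2-spaces} then says that replacing \((U_1,U_j)\) by \((Y_1,Y_j)\) does not change the abelian cycle. In the new decomposition \(x\) has zero \(Y_j\)-component, while the components in the untouched \(U_i\) are unchanged. So the number of nonzero even components drops by one. The complement changes as well, but this is harmless: only \(W\) is affected, and it plays a symmetric role throughout.

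To construct \(\phi\), I will use the classical fact that \(\Sp_{2n}(\Z)[2]\) acts on primitive vectors, and more generally on vectors of a fixed divisibility, with orbits determined by the reduction mod \(2\) and the divisibility. A single even component can be cleared explicitly with squares of transvections \(T_v^2\), \(v \in U_j \oplus W\), which are all in \(\Sp(L_j)[2]\). The idea is to use \(w\) and the rank-\(\geq 4\) room in \(W\) to transfer \(u_j\) into \(W\). For example, if \(u_j = 2a\), choose \(c \in W\) with \(c\cdot w\) suitable and apply transvections along \(a+c\) and \(c\). This gives an elementary Euclidean-type reduction which I expect to terminate because \(y \equiv u_1 \not\equiv 0 \pmod 2\) controls the divisibility. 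At the end of the induction we have \(x = u_1' + w'\) with \(u_1' \in U_1\), \(w' \in W\) even, and \(x\) primitive. Then I apply Corollary~\ref{cor:change-fst}: it suffices to find a rank-\(2\) symplectic submodule \(V \subset U_1 \oplus W\) with \(\bV = \bU_1\) and \(x \in V\). I can take \(V = \psi(U_1)\), where \(\psi \in \Sp(U_1\oplus W)[2]\) sends \(u_1'\) to \(x\). Such a \(\psi\) exists because both vectors are primitive in \(U_1 \oplus W\) and are congruent mod \(2\).

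I expect the main obstacle to be the explicit lattice step: producing the element of the congruence subgroup that kills \(u_j\) when \(y\) is not primitive in \(L_j\), where \(u_1\) may carry an odd common factor with \(u_j+w\). My first attempt will be to pre-process with Corollary~\ref{cor:change-fst} on \(U_1\) alone so that \(u_1\) becomes primitive in \(U_1\). For instance, I would replace \(U_1\) by a congruent rank-\(2\) module in \(U_1 \oplus W\) containing \(u_1 + 2c\) for a suitable \(c \in W\). Once every vector involved is primitive, the transitivity of the level-\(2\) congruence subgroup applies directly.
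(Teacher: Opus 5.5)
Your route is viable, but it uses a different tool for the key move than the paper does, and two details need repair. The paper runs a backward induction on \(m\). It first absorbs the \(W\)-component into \(U_1\) by Corollary~\ref{cor:change-fst}. It then kills the even components \(2\zeta_\ell a_\ell\) one at a time inside the rank-\(4\) lattice \(U_1\oplus U_\ell\), never touching \(W\). Concretely, it writes \((2\zeta_1+1)a_1+2\zeta_\ell a_\ell=d\,a_1'\) with \(d\) odd and \(a_1'=(2\alpha_1+1)a_1+2\alpha_\ell a_\ell\) primitive. It picks \(b_1'=(2\beta_1+1)b_1+2\beta_\ell b_\ell\) with \(a_1'\cdot b_1'=1\) and takes \(U_\ell'\) to be the complement of \(U_1'=\langle a_1',b_1'\rangle\) in \(U_1\oplus U_\ell\). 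Reductions mod \(2\) are visibly unchanged, so Lemma~\ref{lem:change-2-spaces} applies. You replace this two-line gcd computation with the orbit classification of the level-\(2\) congruence subgroup on primitive vectors. That fact is true and standard, but it rests on the stabilizer of a primitive vector surjecting onto the stabilizer of its reduction mod \(2\). The paper's construction is an explicit instance of it and keeps the proof self-contained. Your version is more conceptual and treats the \(U_j\)- and \(W\)-components uniformly.

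\textbf{First repair.} The ``main obstacle'' you single out is not an obstacle, and your proposed cure cannot work. Replacing \(U_1\) by some \(U_1'\subset U_1\oplus W\) with \(U_2,\dots,U_k\) fixed leaves \(U_1\oplus W=(U_2\oplus\dots\oplus U_k)^{\perp}\) unchanged. Hence it leaves both \(L_j\) and \(y\) (the projection of \(x\) to \(L_j\)) unchanged, so it cannot change the divisibility of \(y\). For example, with \(k\geq 3\), \(x=3a_1+6a_2+2a_3\) and \(j=2\), the vector \(y\) stays divisible by \(3\). What you need instead is this. Since \(y\equiv u_1\not\equiv 0 \pmod 2\), write \(y=d\,y_0\) with \(d\) odd and \(y_0\) primitive in \(L_j\). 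Then \(y_0 \bmod 2\) equals \(u_1 \bmod 2\), which lies in \(\bU_1\). Choose a primitive \(z_0\in U_1\) with \(z_0\equiv y_0 \pmod 2\) and \(\phi\in\Sp(L_j)[2]\) with \(\phi(y_0)=z_0\). Then \(\phi(y)=d\,z_0\in U_1\), which also absorbs \(w\) in the same move. Note that your ``classical fact'' is false for even divisibility: for a symplectic pair \(e,f\), the vectors \(2e\) and \(2f\) agree mod \(2\) and have equal divisibility, yet no \(\phi\equiv\mathrm{id}\pmod 2\) sends one to the other. Only odd divisibility occurs here, though.

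\textbf{Second repair.} In your final step \(u_1'\) need not be primitive; take for instance \(x=3a_1+2c\) with \(c\in W\) primitive. Instead, send a primitive \(a\in U_1\) with \(a\equiv x\pmod 2\) to \(x\) by an element of \(\Sp(U_1\oplus W)[2]\). This is legitimate because \(x\) is primitive in the direct summand \(U_1\oplus W\).
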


\begin{proof}
    We prove the lemma by backward induction on \(m\).

    \emph{Base case}. For \(m = k\), choose a
    symplectic basis \(\{a_j, b_j\}\) for \(U_j\) for \(j = 1, \dots, k\) and
    \(a_{k+1} \in \left( U_1 \oplus \dots \oplus U_k \right)^{\perp}\) such
    that
    \[
    x = (2\zeta_1 + 1) a_1 + 2 \zeta_2 a_2 + \dots + 2
    \zeta_k a_k + 2 \zeta_{k+1} a_{k+1}
    .\] 

    We can choose a primitive representative of \((2\zeta_1 + 1) a_1 + 2 \zeta_{k+1} a_{k+1}\):
    \[
    a_1' = (2 \alpha_1 + 1) a_1 + 2 \alpha_{k+1} a_{k+1}
    ,\] 
    with \(\gcd (2 \alpha_1 + 1, 2 \alpha_{k+1}) = 1\). Then there exist 
    \(\beta_1, \beta_{k+1} \in \Z\) such that
    \[
    (2 \alpha_1 + 1)(2 \beta_1 + 1) + 4 \alpha_{k+1} \beta_{k+1} = 1
    .\] 
    Set
    \[
    b_1' = (2 \beta_1 + 1) b_1 + 2 \beta_{k+1} b_{k+1}
    .\] 
    Then \(a_1' \cdot b_1' = 1\) and for \(U_1' = \langle a_1', b_1' \rangle\)
    we have \(\bU_1' = \bU_1\). By Corollary~\ref{cor:change-fst}, we have
    \[
    \A(U_1, U_2, \dots, U_k) = \A(U_1', U_2, \dots, U_k)
    .\] 
    Setting \(\widetilde{U}_1 = U_1', \widetilde{U}_i =
    U_i\) for \(i = 2, \dots, k\) yields the lemma.

    \emph{Induction step}. Assume that the statement holds for all \(m \geq
    \ell\), and consider the case \(m = \ell - 1\). Choose a symplectic basis
    \(\{a_i, b_i\}\) for \(U_i\) for \(i = 1, \dots, \ell\) such that
    \[
    x = (2\zeta_1 + 1) a_1 + 2 \zeta_2 a_2 + \dots + 2 \zeta_\ell a_\ell
    .\]
    As in the base case, choose primitive representative of \((2 \zeta_1 + 1)
    a_1 + 2 \zeta_\ell a_\ell\):
    \[
    a_1' = (2 \alpha_1 + 1) a_1 + 2 \alpha_\ell a_\ell
    \]
    and consider
    \[
    b_1' = (2 \beta_1 + 1) b_1 + 2 \beta_\ell b_\ell
    \]
    such that \(a_1' \cdot b_1' = 1\). Then for \(U_1' = \langle a_1', b_1'
    \rangle\) we have \(\bU_1' = \bU_1\). Choose a rank-\(2\) symplectic
    submodule \(U_\ell' \subset H_1(\Sigma_g)\) such that
    \[
    U_1' \oplus U_\ell' = U_1 \oplus U_\ell
    .\] 
    Then \(\bU_\ell' = \bU_\ell\). By Lemma~\ref{lem:change-2-spaces} it
    follows that
    \[
    \A(U_1, \dots, U_\ell, \dots U_k) = \A(U_1', \dots, U_\ell', \dots, U_k)
    .\]
    Setting \(\widetilde{U}_1 = U_1', \widetilde{U}_\ell = U_\ell',
    \widetilde{U}_i = U_i\) for \(i \neq 1, \ell\) yields the lemma.
\end{proof}

Now we finally prove Lemma~\ref{lem:set-v-fst}.

\begin{proof}[Proof of Lemma~\ref{lem:set-v-fst}]
    Choose \(y \in V_1\) such that \(\{x, y\}\) is a symplectic basis for
    \(V_1\). We prove the lemma by backward induction on \(m\).

    \emph{Base case}. For \(m = k\), by Lemma~\ref{lem:set-x-fst}, we
    may assume that \(U_1, \dots, U_k\) are such that \(x \in U_1\).
    Choose a symplectic basis \(\{a_i, b_i\}\) for \(U_i\) for \(i = 1,
    \dots, k\) such that \(x = a_1\) and
    \[
    y = (2 \mu_1 + 1) b_1 + 2 \lambda_1 a_1 + 2 \mu_2 b_2 + \dots + 2 \mu_k
    b_k + 2 \mu_{k+1} b_{k+1}
    .\] 
    Let \(y' = y - 2 \mu_{k+1} b_{k+1}\) and \(V' = \langle x, y'
    \rangle\). Then \(\bV' = \bV = \bU_1 = \bU_1'\) and \(V' \subset \left(
    U_2 \oplus \dots \oplus U_k \right)^{\perp}\). By
    Corollary~\ref{cor:change-fst} we have
    \[
    \A(V', U_2, \dots, U_k) = \A(U_1, U_2, \dots, U_k)
    .\] 
    Thus, setting \(\widetilde{U}_1 = V', \widetilde{U}_i = U_i\) for \(i =
    2, \dots, k\) the lemma follows.

    \emph{Induction step}. Assume that the statement holds for all \(m \geq
    \ell\), and consider the case \(m = \ell - 1\). As in the base case, by
    Lemma~\ref{lem:set-x-fst}, we may assume that \(U_1, \dots, U_k\) are such
    that \(x \in U_1\). Choose a symplectic basis \(\{a_i, b_i\}\) for \(U_i\)
    for \(i = 1, \dots, \ell\) such that \(x = a_1\) and
    \[
    y = (2 \mu_1 + 1) b_1 + 2 \lambda_1 a_1 + 2 \mu_2 b_2 + \dots + 2 \mu_\ell
    b_\ell
    .\]
    Since \(x \cdot y = 1\), we have \(\mu_1 = 0\). Let \(y' = y - 2
    \lambda_1 a_1 = y - 2 \lambda_1 x \in V\). Then \(\{x, y'\}\) is
    symplectic basis for \(V\) and
    \[
    y' = b_1 + 2 \mu_2 b_2 + \dots + 2 \mu_\ell b_\ell
    .\]
    Let
    \begin{align*}
        b_1' &= b_1 + 2 \mu_\ell b_\ell \\
        a_\ell' &= a_\ell - 2 \mu_\ell a_1
    .\end{align*}
    Let \(U_1' = \langle a_1, b_1' \rangle\) and \(U_\ell' = \langle a_\ell',
    b_\ell \rangle\). Then \(U_1' \perp U_\ell'\) and \(U_1', U_\ell' \perp
    U_j\) for \(j \neq 1, \ell\). By Lemma~\ref{lem:change-2-spaces} we have
    \[
    \A(U_1', U_2, \dots, U_{\ell-1}, U_\ell', U_{\ell+1}, \dots, U_k) =
    \A(U_1, \dots, U_k)
    .\] 
    We also have \(y' \in U_1' \oplus U_2 \oplus \dots \oplus U_{\ell-1}\) and
    thus \(V \subset U_1' \oplus U_2 \dots \oplus U_{\ell-1}\).
    Moreover, we also have \(\bU_1' = \bU_1\) and \(\bU_\ell' = \bU_\ell\). The
    lemma now follows by setting \(\widetilde{U}_1 = U_1', \widetilde{U}_\ell =
    U_\ell', \widetilde{U}_j = U_j\) for all \(j \neq 1, \ell\).
\end{proof}

    \section{Symplectic submodules under the Birman--Craggs--Johnson homomorphism}
\label{sec:symp-submod-BCJ}

In this section, we establish a few technical results concerning the images of
symplectic submodules under the Birman--Craggs--Johnson homomorphism. We begin
with the following lemma.

\begin{lemma} \label{lem:BCJ-gen-1}
    Let \(U, V \subset H_1(\Sigma_g)\) be symplectic submodules of rank
    \(2\) and \(r\), respectively, satisfying \(2 < r\) and \(2 + r < 2g\).
    Then \(\sigma(U) \neq \sigma(V)\) in \(\B_2'\).
\end{lemma}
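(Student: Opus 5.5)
We reduce to a computation in Boolean polynomials modulo \(\Arf\). Choose a symplectic basis of \(H_1(\Sigma_g;\Z/2\Z)\) adapted as far as possible to \(\bU\) and \(\bV\), and then show that \(\sigma(U)+\sigma(V)\) is not a multiple of \(\Arf\) in \(\B_2\). Since \(\sigma(U)\) and \(\sigma(V)\) are quadratic, and \(\B'_2\) is \(\B_2\) modulo the degree-\(\le 2\) part of the ideal \((\Arf)\), which is spanned by \(\Arf\) itself, the claim \(\sigma(U)\neq\sigma(V)\) amounts to
\[
\sigma(U)+\sigma(V)\notin\{0,\Arf\}\quad\text{in } \B_2 .
\]
Here \(\sigma(U)\) and \(\sigma(V)\) are the quadratic forms \(q_{\bU}=\sum\overline{\ba}_i\overline{\bb}_i\) over symplectic bases of \(\bU\) and \(\bV\), taken with any fixed representative of the splitting. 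Replacing \(V\) by \(V^\perp\) swaps \(\sigma(V)\) with \(\sigma(V)+\Arf\). So the claim reduces to showing that \(q_{\bU}+q_{\bV}\) is neither \(0\) nor \(\Arf\).

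The key tool will be the \emph{radical}, or support, of a quadratic Boolean polynomial. For \(q=q_{\bW}\), its associated bilinear form \(q(\bx+\by)+q(\bx)+q(\by)+q(0)\) is the intersection form restricted to \(\bW\), extended by zero on \(\bW^\perp\). Its radical is therefore \(\bW^\perp\), which has dimension \(2g-\dim\bW\).

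Bilinearization is additive. So the bilinear form of \(q_{\bU}+q_{\bV}\) is \(\omega_{\bU}+\omega_{\bV}\), where \(\omega_{\bW}(\bx,\by)=\pi_{\bW}\bx\cdot\pi_{\bW}\by\). We then compare with the two forbidden values. If \(q_{\bU}+q_{\bV}=0\), then \(\omega_{\bU}=\omega_{\bV}\), and the radicals give \(\bU^\perp=\bV^\perp\), hence \(\dim\bU=\dim\bV\), i.e. \(2=r\), a contradiction. If \(q_{\bU}+q_{\bV}=\Arf\), then \(\omega_{\bU}+\omega_{\bV}=\omega\), the full intersection form. Since \(\omega=\omega_{\bV}+\omega_{\bV^\perp}\), this gives \(\omega_{\bU}=\omega_{\bV^\perp}\). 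Then \(\bU^\perp=\bV\), so \(2=2g-r\), i.e. \(2+r=2g\), again excluded.

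The one point that needs care is the identification of the radical of \(\omega_{\bW}\) with \(\bW^\perp\), and its uniqueness determining \(\bW\). This follows because \(\bW\) is a symplectic subspace, i.e. \(H_1=\bW\oplus\bW^\perp\) with \(\omega|_{\bW}\) nondegenerate. Because of this, no rank condition is needed: \(\omega_{\bW}\) determines \(\bW^\perp\) and hence \(\bW\), as its orthogonal complement. It also needs checking that the degree-\(\le 2\) part of \((\Arf)\) in \(\B_2\) consists only of \(0\) and \(\Arf\). I expect this to be the main technical obstacle. It can be checked by expanding \((c+\sum\epsilon_i\overline{\be}_i+\dots)\Arf\) in the Boolean monomial basis, in the same way as Lemma~\ref{lem:arf-zero-intersec}: any nonconstant multiplier produces cubic monomials \(\overline{\be}_i\overline{\ba}_j\overline{\bb}_j\) with \(j\) chosen so that \(\be_i\notin\{\ba_j,\bb_j\}\), and such a \(j\) exists since \(g\ge2\).
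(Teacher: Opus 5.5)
Your main argument is correct, and it takes a different route from the paper. The paper makes the same reduction to $\B_2$: the two lifts of $\sigma(V)$ are $q_{\bV}$ and $q_{\bV^\perp}=q_{\bV}+\Arf$, and $2<r<2g-2$ makes both of rank $>2$. It then separates rank $2$ from higher rank by an ad hoc test. For $q_{\bU}$ there is a nonzero $\bx$ with $\overline{\bx}\,q_{\bU}\in\B_2$ (take $\bx=\ba_1$). For a symplectic $\bW$ of dimension at least $4$ no such $\bx$ exists: projecting $\bx$ onto $\langle\ba_1,\bb_1\rangle^\perp$, for a hyperbolic pair in $\bW$ not containing $\bx$, leaves an uncancellable cubic term $\overline{\bx}_1\overline{\ba}_1\overline{\bb}_1$.

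Your polarization argument recovers the subspace itself, not just its dimension. It shows that $\sigma(\bW)=\sigma(\bW')$ in $\B_2'$ if and only if $\bW'\in\{\bW,\bW^\perp\}$, for symplectic subspaces of any dimension. So it also gives Proposition~\ref{prop:mod-2-sigma} (for $g\ge 3$) without the external lemma the paper cites. The price is that you must say what $q(\bx)$ means. Regard $\B$ as the ring of functions on the set of quadratic refinements $\omega$ of the mod~$2$ intersection form, via $\overline{\be}(\omega)=\omega(\be)$. This set is an affine space over $H_1(\Sigma_g;\Z/2\Z)$, where translation by $\by$ adds $\by\cdot(-)$, and your second difference is taken there, independently of base point. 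Then $\overline{\ba}\,\overline{\bb}$ polarizes to $(\by\cdot\ba)(\bz\cdot\bb)+(\bz\cdot\ba)(\by\cdot\bb)$, which gives $\omega_{\bW}$ as you claim.

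One step needs repair: your justification that $\B_2\cap(\Arf)=\{0,\Arf\}$. The paper uses this fact silently in ``it suffices to prove the inequality in $\B_2$''. Your claim that any nonconstant multiplier produces cubic monomials is false. $\B$ is a Boolean ring, so $\Arf\cdot\Arf=\Arf$ and $(1+\Arf)\Arf=0$, and multipliers of degree $\ge 2$ can collapse in degree. Your expansion is valid only for affine multipliers $c+\overline{\bx}$.

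Here is a correct argument. If $f\in\B_2\cap(\Arf)$, then $f=f\Arf$.
\begin{enumerate}
\item The degree-$4$ part of $f\Arf$ must vanish. Its monomials are $\overline{\bx}\,\overline{\by}\,\overline{\ba}_j\overline{\bb}_j$, coming from quadratic monomials $\overline{\bx}\,\overline{\by}$ of $f$ disjoint from $\{\overline{\ba}_j,\overline{\bb}_j\}$.
\item Here $g\ge 4$, since $4\le r<2g-2$. So every non-hyperbolic pair $\{\bx,\by\}$ misses some pair $\{\ba_j,\bb_j\}$. The resulting monomial occurs only once, so the coefficient of $\overline{\bx}\,\overline{\by}$ in $f$ is $0$.
\item The monomial $\overline{\ba}_j\overline{\bb}_j\overline{\ba}_k\overline{\bb}_k$ has coefficient $c_j+c_k$, where $c_j$ is the coefficient of $\overline{\ba}_j\overline{\bb}_j$ in $f$. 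Hence all $c_j$ equal a single $c$.
\item Thus $h=f+c\Arf$ is an affine element of $(\Arf)$ with $h=h\Arf$. Now your expansion does apply: a nonzero linear part of $h$ would produce uncancelled cubic monomials $\overline{\be}_i\overline{\ba}_j\overline{\bb}_j$ in $h\Arf$.
\item So $h$ is a constant $e$ with $e=e\Arf$, which forces $e=0$. Hence $f=c\Arf\in\{0,\Arf\}$.
\end{enumerate}
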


\begin{proof}
    Since \(2 + r < 2g\), it suffices to prove that the inequality \(\sigma(U)
    \neq \sigma(V)\) holds in \(\B_2\). The assertion follows immediately from
    the fact that there exists an element \(x \in H_1(\Sigma_g)\) such that
    \(\overline{\bx} \, \sigma(U) \in \B_2\), whereas no such element
    exists for \(\sigma(V)\).

    Indeed, for \(\sigma(U)\), we may select a symplectic basis \(\{a_1,
    b_1\}\) for \(U\)
    and set \(x = a_1\). To see that no such element exists for \(\sigma(V)\), suppose
    on the contrary that such an \(x \in H_1(\Sigma_g)\) is given. Fixing a
    symplectic basis \(\{a_1, b_1, \dots, a_r, b_r\}\) for \(V\), we have
    \[
    \overline{\bx} \left( \overline{\ba}_1 \overline{\bb}_1 + \dots +
    \overline{\ba}_r \overline{\bb}_r \right) \in \B_2
    .\]
    Denoting by \(\bx_1\) the projection of \(\bx\) onto \(\langle \ba_1, \bb_1 \rangle^{\perp}\),
    this leads to
    \[
    \overline{\bx}_1 \overline{\ba}_1 \overline{\bb}_1 + \overline{\bx}
    \left(\overline{\ba}_2 \overline{\bb}_2 + \dots + \overline{\ba}_r
    \overline{\bb}_r\right) \in \B_2
    .\]
    However, since \(\overline{\bx}_1 \overline{\ba}_1 \overline{\bb}_1 \notin \B_2\)
    and none of the remaining monomials contain the product \(\overline{\ba}_1
    \overline{\bb}_1\), the entire sum cannot lie in \(\B_2\), which is
    impossible.
\end{proof}

\begin{corollary} \label{cor:sigma-rank}
    For any integer \(n\) satisfying \(2 \leq n \leq g-2\) and any rank-\(2\) 
    symplectic submodules \(U, V_1, \dots, V_n \subset H_1(\Sigma_g)\) such 
    that \(V_i \perp V_j\) for all \(i \neq j\), we have
    \[
    \sigma(U) \neq \sigma(V_1) + \dots + \sigma(V_n)
    .\] 
\end{corollary}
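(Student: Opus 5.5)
The plan is to reduce to Lemma~\ref{lem:BCJ-gen-1}. Set \(V = V_1 \oplus \dots \oplus V_n\). Since the \(V_i\) are pairwise orthogonal rank-\(2\) symplectic submodules, \(V\) is a symplectic submodule of rank \(r = 2n\). The first step is to check that
\[
\sigma(V_1) + \dots + \sigma(V_n) = \sigma(V)
\]
in \(\B_2'\). For each \(i\) choose a symplectic basis \(\{a_i, b_i\}\) of \(V_i\). Together these give a symplectic basis of \(V\), and their reductions mod \(2\) give a symplectic basis of \(\bV\). By~\eqref{eq:BCJ-sep}, and the definition of \(\sigma\) on symplectic submodules, we have \(\sigma(V_i) = \overline{\ba}_i \overline{\bb}_i\). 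Likewise \(\sigma(V) = \sum_{i=1}^{n} \overline{\ba}_i \overline{\bb}_i\), where \(V\) is realized as the homology of a subsurface of genus \(n\) bounded by a separating curve. Such a curve exists because every symplectic submodule splitting \(H_1(\Sigma_g)\) is realized by a separating curve. This uses that \(\Sp_{2g}(\Z)\) acts transitively on splittings of a given rank, together with the surjectivity of \(\Mod(\Sigma_g) \to \Sp_{2g}(\Z)\). It is also implicit in the paper's definition of \(\sigma(V)\). Summing the expressions for \(\sigma(V_i)\) gives the identity.

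The second step is to verify the rank hypotheses of Lemma~\ref{lem:BCJ-gen-1} with \(U\) of rank \(2\) and \(V\) of rank \(r = 2n\). The condition \(2 < r\) holds because \(n \geq 2\) gives \(r \geq 4\). The condition \(2 + r < 2g\) holds because \(n \leq g-2\) gives \(2 + 2n \leq 2g - 2 < 2g\). The lemma then gives \(\sigma(U) \neq \sigma(V)\), and combining this with the first step yields \(\sigma(U) \neq \sigma(V_1) + \dots + \sigma(V_n)\).

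The only point needing care is the identification in the first step, namely that \(\sigma\) is additive over orthogonal direct sums of symplectic submodules. This is really just a reading of formula~\eqref{eq:BCJ-sep}. One must make sure the Boolean expression \(\sum \overline{\ba}_i\overline{\bb}_i\) is computed in the same ambient algebra \(\B_2'(\Sigma_g)\), which it is, since all curves lie in \(\Sigma_g\). One should also confirm there is no ambiguity from choosing \(V\) versus \(V^{\perp}\). This is harmless, because both give the same class modulo \(\Arf\), and Lemma~\ref{lem:BCJ-gen-1} is stated for \(\sigma(V)\) as an element of \(\B_2'\). Everything else is a direct substitution.
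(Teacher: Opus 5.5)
Your proposal is correct and is essentially the paper's own argument: apply Lemma~\ref{lem:BCJ-gen-1} to \(U\) and \(V_1 \oplus \dots \oplus V_n\), which has rank \(2n\) with \(2 < 2n\) and \(2 + 2n < 2g\). You also spell out the additivity \(\sigma(V_1 \oplus \dots \oplus V_n) = \sigma(V_1) + \dots + \sigma(V_n)\) via~\eqref{eq:BCJ-sep}, which the paper leaves implicit.
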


\begin{proof}
    Applying Lemma~\ref{lem:BCJ-gen-1} to the
    symplectic submodules \(U, V_1 \oplus \dots \oplus V_n \subset
    H_1(\Sigma_g)\) yields the assertion.
\end{proof}

\begin{corollary} \label{cor:wedge-sigma}
    Suppose that \(U_1, \dots, U_k, V_1, \dots, V_k \subset H_1(\Sigma_g)\)
    (with \(k \leq g-2\)) are rank-\(2\) symplectic submodules such that 
    \begin{itemize}
        \item \(U_i \perp U_j\) for all \(i \neq j\);
        \item \(V_i \perp V_j\) for all \(i \neq j\).
    \end{itemize}
    Then equality
    \[
    \sigma(U_1) \wedge \dots \wedge \sigma(U_k) = \sigma(V_1) \wedge \dots
    \wedge \sigma(V_k)
    \] 
    implies
    \[
    \{\bU_1, \dots, \bU_k\} = \{\bV_1, \dots, \bV_k\}
    .\] 
\end{corollary}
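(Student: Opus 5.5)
Let \(f_i = \sigma(U_i)\) and \(h_i = \sigma(V_i)\). The plan is to reduce the statement to a linear-algebra fact about nonzero decomposable wedges. First, the equality \(f_1 \wedge \dots \wedge f_k = h_1 \wedge \dots \wedge h_k\) must be a nonzero element of \(\wedge^k \B_2'\). Indeed, if \(f_1, \dots, f_k\) were linearly dependent, then some \(f_j\) would equal a sum of a subset of the others. By Proposition~\ref{prop:mod-2-sigma}, \(f_j\) cannot equal a single \(f_i\) with \(i \neq j\), because \(\bU_i \neq \bU_j\) (they are orthogonal \(2\)-dimensional symplectic subspaces, hence distinct). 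A sum of \(n \geq 2\) of the \(f_i\), with \(n \leq k-1 \leq g-3\), is \(\sigma\) of the orthogonal sum of the corresponding \(U_i\). Corollary~\ref{cor:sigma-rank} then excludes this. So \(f_1, \dots, f_k\) are linearly independent, and likewise \(h_1, \dots, h_k\).

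Next I would use the standard fact about exterior algebra over a field: a nonzero decomposable \(k\)-vector \(f_1 \wedge \dots \wedge f_k\) determines the span \(\langle f_1, \dots, f_k \rangle\). Concretely, this span equals \(\{v : v \wedge f_1 \wedge \dots \wedge f_k = 0\}\). Hence \(\langle f_1, \dots, f_k \rangle = \langle h_1, \dots, h_k \rangle\), and each \(h_j\) is a sum of a nonempty subset of the \(f_i\).

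The step I expect to be the main obstacle is showing that every such subset is a singleton. Suppose \(h_j = f_{i_1} + \dots + f_{i_n}\) with \(n \geq 2\). The right-hand side equals \(\sigma(U_{i_1} \oplus \dots \oplus U_{i_n})\), where the sum has rank \(2n\) with \(4 \leq 2n \leq 2k \leq 2g-4\), so \(2 + 2n < 2g\). Lemma~\ref{lem:BCJ-gen-1}, applied to \(V_j\) and this orthogonal sum, gives a contradiction; equivalently, one can invoke Corollary~\ref{cor:sigma-rank} directly. Therefore each \(h_j = f_{\pi(j)}\) for some index \(\pi(j)\). Since the \(h_j\) are linearly independent, \(\pi\) is injective and hence a permutation. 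Proposition~\ref{prop:mod-2-sigma} then gives \(\bV_j = \bU_{\pi(j)}\), which yields \(\{\bU_1, \dots, \bU_k\} = \{\bV_1, \dots, \bV_k\}\).

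The point that needs the most care is the rank bound \(k \leq g-2\). It is used twice: in the independence step, and in the singleton step, where the hypothesis \(2 + r < 2g\) of Lemma~\ref{lem:BCJ-gen-1} must hold for the full orthogonal sum. That hypothesis lets us pass from \(\B_2'\) to \(\B_2\) without the Arf relation interfering.
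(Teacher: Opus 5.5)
Your argument is correct and follows the paper's route: each $\sigma(V_j)$ is written as a sum of some $\sigma(U_i)$, Corollary~\ref{cor:sigma-rank} forces a single summand, and Proposition~\ref{prop:mod-2-sigma} turns $\sigma(V_j)=\sigma(U_{\pi(j)})$ into $\bV_j=\bU_{\pi(j)}$. You are more careful than the paper, which silently assumes the common wedge is nonzero; your linear-independence step supplies this, though it needs one more line, namely that $\sigma(U_j)\neq 0$ in $\B_2'$, to rule out a linear dependence consisting of a single term.
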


\begin{proof}
    The equality
    \[
    \sigma(U_1) \wedge \dots \wedge \sigma(U_k) = \sigma(V_1) \wedge \dots
    \wedge \sigma(V_k)
    \] 
    implies that for some indices \(i_1, \dots, i_\ell\) we have
    \[
    \sigma(V_1) = \sigma(U_{i_1}) + \dots + \sigma(U_{i_\ell})
    .\] 
    By Corollary~\ref{cor:sigma-rank}, it follows that \(\ell = 1\), which means
    \(\sigma(V_1) = \sigma(U_{i_1})\). Repeating this argument for the
    remaining components \(V_2, \dots, V_k\) yields
    \[
    \{\sigma(V_1), \dots, \sigma(V_k)\} = \{\sigma(U_1), \dots, \sigma(U_k)\}
    .\] 
    Without loss of generality, we may assume that \(\sigma(V_j) =
    \sigma(U_j)\) for all \(j =
    1, \dots, k\). Applying Proposition~\ref{prop:mod-2-sigma} then implies
    that \(\bV_j = \bU_j\) for each \(j = 1, \dots, k\), and the corollary
    follows.
\end{proof}

    \section{Proof of Theorem~\ref{mainthm:BCJ-inj}} \label{sec:sigma2-inj}

In this section, we prove Theorem~\ref{mainthm:BCJ-inj}.

\proofoutline{Theorem~\ref{mainthm:BCJ-inj}}
The proof of Theorem~\ref{mainthm:BCJ-inj} proceeds as follows. First, we
collect the necessary facts about abelian cycles and translate them into
statements about cycles whose components are \(2\)-dimensional symplectic
subspaces of \(H_1(\Sigma_g; \Z/2\Z)\). Second, we replace the domain of
\(\sigma_k\) with an isomorphic \(\Z/2\Z\)-vector space expressed in terms of
these symplectic subspaces, yielding a new homomorphism
\(\widetilde{\sigma}_k\). Finally, we prove that \(\widetilde{\sigma}_k\) is
injective, which in turn implies the injectivity of \(\sigma_k\).

\subsection{Abelian cycles setup}

In this subsection, we summarize the necessary background regarding linear 
subspaces and abelian cycles. We begin by recalling the following 
result from~\cite{vladimirov}.

\begin{proposition} \label{prop:gen-1-generation}
    Let \(g \geq 3\) and \(1 \leq k \leq g-1\). The \(\Z/2\Z\)-vector space
    \(H_k^{\mathrm{ab,sep}}(\I_g)\) is generated by the abelian cycles \(\A(U_1,
    \dots, U_k)\), where \(U_1, \dots, U_k \subset H_1(\Sigma_g)\) are
    rank-\(2\) symplectic submodules satisfying \(U_i \perp U_j\) for all \(i
    \neq j\).
\end{proposition}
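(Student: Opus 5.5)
The plan is to reduce the statement to result~(ii) recalled in the introduction, which is \cite[Theorem~1.1]{vladimirov}. That result says that for \(k \leq g-1\) the space \(H_k^{\mathrm{ab,sep}}(\I_g)\) is generated by cycles \(\A(T_{\delta_1}, \dots, T_{\delta_k})\) in which every \(\delta_i\) is a separating curve of genus~\(1\). It then remains to translate such a generator into the language of symplectic submodules.

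\textbf{Step 1: make the curves pairwise disjoint caps.} For a generator with all \(\delta_i\) of genus~\(1\), the curves are pairwise disjoint. We may assume they are pairwise nonisotopic, since otherwise the cycle vanishes by property (III), or more simply because the same twist appears twice. Since \(k \leq g-1\), each \(\delta_i\) bounds a unique genus-\(1\) side \(R_i\) that contains none of the other curves. Indeed, if \(\delta_j\) lay inside the genus-\(1\) side of \(\delta_i\), it would be a separating curve in a one-holed torus, hence either isotopic to \(\delta_i\) or inessential. Thus \(\delta_1, \dots, \delta_k\) is an admissible partition in which every curve is outermost, with \(\Cap(\delta_i) = R_i\), and these caps are pairwise disjoint.

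\textbf{Step 2: pass to homology.} Set \(U_i = H_1(R_i) \subset H_1(\Sigma_g)\). Each \(U_i\) is a rank-\(2\) symplectic submodule, because \(R_i \cong \Sigma_1^1\) and the intersection form restricted to it is unimodular. Since the \(R_i\) are disjoint, \(U_i \perp U_j\) for \(i \neq j\). By the lemma in Section~\ref{sec:BCJ-k} showing that the abelian cycle depends only on the splitting, we have \(\A(T_{\delta_1}, \dots, T_{\delta_k}) = \A(U_1, \dots, U_k)\). Hence every generator has the required form.

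\textbf{Step 3: the reverse direction.} For completeness, we check that every such tuple \((U_1, \dots, U_k)\) is realized by curves, so that the notation is meaningful. Given pairwise orthogonal rank-\(2\) symplectic submodules, extend their symplectic bases to a symplectic basis of \(H_1(\Sigma_g)\). The surjectivity of \(\Mod(\Sigma_g) \to \Sp_{2g}(\Z)\) then lets us carry a standard collection of disjoint genus-\(1\) curves to one realizing these \(U_i\).

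\textbf{Expected obstacle.} The only delicate point is Step~1, namely checking outermost-ness and the range of \(k\). It is routine, and the cited \cite[Theorem~1.1]{vladimirov} does the real work.
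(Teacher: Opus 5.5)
Your proposal is correct and follows the paper's route. The paper simply recalls this as the linear-algebraic form of result~(ii), i.e.\ \cite[Theorem~1.1]{vladimirov}, and your Steps~1--2 spell out the translation from disjoint genus-\(1\) curves to pairwise orthogonal rank-\(2\) submodules via the well-definedness lemma of Section~\ref{sec:BCJ-k}. Two minor slips: a cycle with a repeated twist vanishes because the defining map \(\Z^k \to \I_g\) then factors through \(\Z^{k-1}\), not by property~(III), and the uniqueness of the genus-\(1\) side of each \(\delta_i\) comes from \(g \geq 3\) rather than from \(k \leq g-1\).
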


We have the following proposition.

\begin{proposition} \label{prop:sigma-k-inj2}
    Let \(U_1, \dots, U_k, V_1, \dots, V_k \subset H_1(\Sigma_g)\) be
    rank-\(2\) symplectic submodules such that 
    \begin{itemize}
        \item \(U_i \perp U_j\) for all \(i \neq j\);
        \item \(V_i \perp V_j\) for all \(i \neq j\).
    \end{itemize}
    Then the equality
    \[
    \sigma_k(\A(V_1, \dots, V_k)) = \sigma_k(\A(U_1, \dots, U_k))
    \]
    implies that
    \[
    \A(V_1, \dots, V_k) = \A(U_1, \dots, U_k)
    .\]
\end{proposition}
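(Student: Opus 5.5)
We combine Corollary~\ref{cor:wedge-sigma} with Proposition~\ref{prop:inj-k-2}. We work in the range \(g \geq 4\), \(2 \leq k \leq g-2\) of Theorem~\ref{mainthm:BCJ-inj}; this is the range in which both results apply. By the formula for \(\sigma_*\) on abelian cycles, the hypothesis reads
\[
\sigma(V_1) \wedge \dots \wedge \sigma(V_k) = \sigma(U_1) \wedge \dots \wedge \sigma(U_k)
\]
in \(\wedge^k \B_2'\).

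First, we dispose of degenerate cases. If two of the \(\bU_i\) coincide, then \(\sigma(U_i) = \sigma(U_j)\) for some \(i \neq j\), so the right-hand side vanishes. But two orthogonal rank-\(2\) submodules \(U_i \perp U_j\) have orthogonal, hence distinct, nondegenerate mod~\(2\) reductions. So this case cannot occur, and likewise for the \(V_i\). Next we need both wedges to be nonzero, which requires \(\sigma(U_1), \dots, \sigma(U_k)\) to be linearly independent in \(\B_2'\). The proof of Corollary~\ref{cor:wedge-sigma} already implicitly uses this, via Corollary~\ref{cor:sigma-rank} together with Lemma~\ref{lem:BCJ-gen-1}. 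If a sum of some of the \(\sigma(U_i)\) were zero, then one \(\sigma(U_i)\) would equal a sum of the others, and that sum is \(\sigma\) of an orthogonal direct sum of rank \(\geq 4\) (and \(< 2g - 2\)), contradicting Lemma~\ref{lem:BCJ-gen-1}. For the edge case of a sum of exactly two, we instead use Proposition~\ref{prop:mod-2-sigma}. So both wedges are nonzero decomposable \(k\)-vectors, and their equality means the spans \(\langle \sigma(U_i) \rangle\) and \(\langle \sigma(V_i) \rangle\) coincide.

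Now we apply Corollary~\ref{cor:wedge-sigma} to obtain \(\{\bU_1, \dots, \bU_k\} = \{\bV_1, \dots, \bV_k\}\). Choose a permutation \(\pi\) with \(\bV_{\pi(i)} = \bU_i\). By property (II) of abelian cycles, \(\A(V_1, \dots, V_k) = \pm \A(V_{\pi(1)}, \dots, V_{\pi(k)})\). Since \(H_k^{\mathrm{ab,sep}}(\I_g)\) is a \(\Z/2\Z\)-vector space for \(k \geq 2\) (fact (i) from the introduction), the sign is irrelevant. Finally, Proposition~\ref{prop:inj-k-2}, applied to \((U_1, \dots, U_k)\) and \((V_{\pi(1)}, \dots, V_{\pi(k)})\), gives
\[
\A(V_{\pi(1)}, \dots, V_{\pi(k)}) = \A(U_1, \dots, U_k),
\]
which completes the argument.

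The main obstacle is to make sure Corollary~\ref{cor:wedge-sigma} is legitimately applicable: that is, to check rigorously that the sums of the \(\sigma(U_i)\) used in its proof fall within the rank constraints of Lemma~\ref{lem:BCJ-gen-1}, namely \(2 + r < 2g\) with \(r \leq 2k \leq 2g - 4\), so that the spans of the two families can be matched element by element. Everything after that is a formal consequence of the earlier results.
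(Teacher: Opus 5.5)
Your proposal is correct and follows the paper's proof: Corollary~\ref{cor:wedge-sigma} gives $\{\bU_1,\dots,\bU_k\}=\{\bV_1,\dots,\bV_k\}$, the fact that these cycles have order~$2$ lets you reorder the components, and Proposition~\ref{prop:inj-k-2} then finishes. Your extra check that the $\sigma(U_i)$ are linearly independent in $\B_2'$, which the paper's proof of Corollary~\ref{cor:wedge-sigma} uses tacitly, is a good addition; strictly it should also cover the one-term case $\sigma(U_i)\neq 0$, which holds because $\overline{\ba}\,\overline{\bb}$ does not lie in the ideal generated by $\Arf$ when $g\geq 2$.
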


\begin{proof}
    The equality \(\sigma_k(\A(V_1, \dots, V_k)) = \sigma_k(\A(U_1, \dots,
    U_k))\) yields
    \[
    \sigma(V_1) \wedge \dots \wedge \sigma(V_k) = \sigma(U_1) \wedge \dots \wedge \sigma(U_k)
    .\]
    Corollary~\ref{cor:wedge-sigma} implies that 
    \[
    \{\bV_1, \dots, \bV_k\} = \{\bU_1, \dots, \bU_k\}
    .\]

    Since the abelian cycle \(\A(V_1, \dots, V_k)\) has order~\(2\), it is
    invariant under permutations of its components. Therefore, without loss of
    generality, we may assume that \(\bV_j = \bU_j\) for all \(j = 1, \dots, k\).
    The proposition now follows from Proposition~\ref{prop:inj-k-2}.
\end{proof}

\subsection{Abelian cycles with components modulo 2}

In this subsection we will show how to define the abelian cycle \(\A(\bU_1,
\dots, \bU_k) \in H_k(\I_g)\), where \(\bU_1, \dots, \bU_k \subset
H_1(\Sigma_g; \Z/2\Z)\) are \(2\)-dimensional symplectic subspaces with
\(\bU_i \perp \bU_j\) for all \(i \neq j\).

We begin with the following standard lemma from linear algebra.

\begin{lemma} \label{lem:lift-mod-2}
    Let \(\bU_1, \dots, \bU_m\) be \(2\)-dimensional symplectic subspaces in
    \(H_1(\Sigma_g; \Z/2\Z)\) such that \(\bU_i \perp \bU_j\) for all \(i \neq
    j\). Then there exist rank-\(2\) symplectic submodules \(\widetilde{U}_1,
    \dots, \widetilde{U}_m \subset H_1(\Sigma_g)\) such that 
    \begin{itemize}
        \item \(\widetilde{U}_i \perp \widetilde{U}_j\) for all \(i \neq j\); and
        \item \(\widetilde{\bU}_j = \bU_j\) for \(j = 1, \dots, m\).
    \end{itemize}
    We call \(\widetilde{U}_1, \dots, \widetilde{U}_m\) a \emph{lift} of
    \(\bU_1, \dots, \bU_m\).
\end{lemma}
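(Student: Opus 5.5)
Lemma~\ref{lem:lift-mod-2} will be proved by induction on \(m\), lifting one subspace at a time and passing to orthogonal complements. The key input is the surjectivity of the reduction map \(\Sp_{2n}(\Z) \to \Sp_{2n}(\Z/2\Z)\) for every \(n\). This is classical: \(\Sp_{2n}(\Z/2\Z)\) is generated by symplectic transvections \(\bx \mapsto \bx + (\bx\cdot\bv)\bv\), and each of these lifts to the integral transvection along any lift \(v\) of \(\bv\).

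The case \(m=0\) is vacuous. For \(m \geq 1\), first note that \(\bU_1 \oplus \dots \oplus \bU_m\) is a symplectic subspace of dimension \(2m\). Complete it to a symplectic basis
\[
\be_1, \be_1', \dots, \be_g, \be_g'
\]
of \(H_1(\Sigma_g;\Z/2\Z)\) so that \(\{\be_j, \be_j'\}\) is a symplectic basis of \(\bU_j\) for \(j \leq m\). This is possible because each \(\bU_j\) is a \(2\)-dimensional symplectic subspace and the \(\bU_j\) are pairwise orthogonal, so their sum is nondegenerate and its orthogonal complement is again symplectic.

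Now fix an integral symplectic basis \(a_1, b_1, \dots, a_g, b_g\) of \(H_1(\Sigma_g)\). The \(\Z/2\Z\)-linear map sending \(\ba_i \mapsto \be_i\) and \(\bb_i \mapsto \be_i'\) lies in \(\Sp_{2g}(\Z/2\Z)\). By the surjectivity above, it lifts to some \(A \in \Sp_{2g}(\Z)\). Set
\[
\widetilde{U}_j = \langle A a_j, A b_j \rangle .
\]
Since \(A\) preserves the intersection form, each \(\widetilde{U}_j\) is a rank-\(2\) symplectic submodule, and the \(\widetilde{U}_j\) are pairwise orthogonal. Their reductions are \(\langle \be_j, \be_j' \rangle = \bU_j\), as required.

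This global argument in fact avoids the induction. The only nontrivial step, and the one that could be an obstacle, is the surjectivity of \(\Sp_{2g}(\Z)\to\Sp_{2g}(\Z/2\Z)\). It is standard (strong approximation, or the transvection argument above), and I would cite it rather than reprove it.
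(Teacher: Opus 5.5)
Your proof is correct. There is nothing in the paper to match it against step by step: the paper calls this a ``standard lemma from linear algebra'' and gives no proof, so your argument supplies one.

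The reduction to the surjectivity of \(\Sp_{2g}(\Z) \to \Sp_{2g}(\Z/2\Z)\) is sound. The pairwise orthogonal nondegenerate planes \(\bU_j\) form a direct sum whose orthogonal complement is again symplectic, so their symplectic bases extend to a symplectic basis of \(H_1(\Sigma_g;\Z/2\Z)\). Your justification of surjectivity is also adequate: \(\Sp_{2g}(\Z/2\Z)\) is generated by transvections, and each lifts to the integral transvection \(x \mapsto x + (x\cdot v)v\), which preserves the form for any \(v\).

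The more elementary route, which the paper presumably has in mind and which you announced before dropping it, is induction on \(m\):
\begin{enumerate}
\item Take a primitive lift \(e\) of \(\be_1\); dividing any lift by its content works, since the content is odd.
\item Take any lift \(f\) of \(\be_1'\), so that \(e \cdot f = 2t+1\).
\item Replace \(f\) by \(e' = f + 2w\) with \(e\cdot w = -t\). Such a \(w\) exists because \(e\) is primitive in a unimodular lattice.
\item Then \(U_1 = \langle e, e'\rangle\) is a rank-\(2\) symplectic submodule reducing to \(\bU_1\). Its complement \(U_1^{\perp}\) is unimodular of rank \(2g-2\) with reduction \(\bU_1^{\perp} \supset \bU_2, \dots, \bU_m\), and one recurses inside it.
\end{enumerate}
This is self-contained and in the spirit of the explicit coefficient adjustments in the proof of Lemma~\ref{lem:set-x-fst}. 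Your version trades that bookkeeping for one classical cited fact and produces all the lifts at once; either is fine.

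You should delete your opening sentence announcing an induction, since the argument you actually give does not use one.
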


\begin{proposition} \label{prop:ab-cyc-mod-2}
    Let \(\bU_1, \dots, \bU_k \subset H_1(\Sigma_g; \Z/2\Z)\) be
    \(2\)-dimensional symplectic subspaces with \(\bU_i \perp \bU_j\) for all \(i
    \neq j\). Then the abelian cycle \(\A(\bU_1, \dots, \bU_k) \in
    H_k(\I_g)\), defined by setting \(\A(\bU_1, \dots, \bU_k) = \A(U_1, \dots,
    U_k)\) for a lift \((U_1, \dots, U_k)\) of \((\bU_1, \dots, \bU_k)\), is
    well-defined; that is, it does not depend on the choice of lift.
\end{proposition}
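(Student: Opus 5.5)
The plan is to reduce well-definedness directly to Proposition~\ref{prop:inj-k-2}. First, existence: Lemma~\ref{lem:lift-mod-2} guarantees that at least one lift \((U_1, \dots, U_k)\) of \((\bU_1, \dots, \bU_k)\) exists, so \(\A(U_1, \dots, U_k)\) makes sense. Any lift consists of rank-\(2\) symplectic submodules with \(U_i \perp U_j\) for \(i \neq j\). By Lemma~\ref{lem:abel-ord-2} and the discussion before it, these correspond to an admissible partition by pairwise disjoint genus-\(1\) separating curves, all outermost. The abelian cycle \(\A(U_1, \dots, U_k)\) is therefore defined and, by the lemma of the linear-algebraic-viewpoint subsection, depends only on the tuple of submodules.

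Next, independence of the lift. Let \((U_1, \dots, U_k)\) and \((V_1, \dots, V_k)\) be two lifts. Then \(U_i \perp U_j\) and \(V_i \perp V_j\) for \(i \neq j\), and \(\bU_i = \bV_i\) for every \(i\), since both reduce to the given subspace. These are precisely the hypotheses (1)--(3) of Proposition~\ref{prop:inj-k-2}, so in the range \(g \geq 4\), \(2 \leq k \leq g-2\) we get \(\A(V_1, \dots, V_k) = \A(U_1, \dots, U_k)\).

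The main point to check is the range of \(k\). The proposition is used in the proof of Theorem~\ref{mainthm:BCJ-inj}, where \(2 \leq k \leq g-2\), so this is the range in which it is needed, and I would state that implicitly. If one also wants \(k = 1\), it follows separately. In that case \(\A(U_1) = [T_\delta]\), and Johnson's theorem that \(\sigma\) induces an isomorphism \(H_1^{\mathrm{ab,sep}}(\I_g) \cong \B_2'\) (for \(g \geq 3\)) shows that \([T_\delta]\) depends only on \(\sigma(U_1)\), hence only on \(\bU_1\). Beyond this bookkeeping I do not expect a real obstacle, since all the geometric work is contained in Proposition~\ref{prop:inj-k-2}.
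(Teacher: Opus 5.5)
Your proof is correct and rests on the same key input as the paper, Proposition~\ref{prop:inj-k-2} in the range \(g \geq 4\), \(2 \leq k \leq g-2\). The paper detours through Proposition~\ref{prop:sigma-k-inj2}: it compares \(\sigma_k\)-images and then uses Corollary~\ref{cor:wedge-sigma} to recover the reductions. You instead note that two lifts already satisfy \(\bU_i = \bV_i\) componentwise, so Proposition~\ref{prop:inj-k-2} applies directly, which is slightly cleaner.

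One slip: Lemma~\ref{lem:abel-ord-2}, which says \(2[T_\gamma]=0\) in \(H_1(\I(S))\), is not what realizes orthogonal rank-\(2\) submodules by disjoint genus-\(1\) separating curves. That realization is the standard consequence of surjectivity of \(\Mod(\Sigma_g) \to \Sp_{2g}(\Z)\), already implicit in the paper's notation \(\A(U_1, \dots, U_k)\).
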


\begin{proof}
    By Corollary~\ref{lem:lift-mod-2}, there exist a lift \(U_1, \dots, U_k\)
    of \(\bU_1, \dots, \bU_k\). Then we set
    \[
    \A(\bU_1, \dots, \bU_k) = \A(U_1, \dots, U_k)
    .\] 
    The fact that \(\A(\bU_1, \dots, \bU_k)\) does not depend on the choice of
    the lift follows from Proposition~\ref{prop:sigma-k-inj2}. Indeed, for any
    other lift \(U_1', \dots, U_k'\) of \(\bU_1, \dots, \bU_k\) we have
    \[
    \sigma_k(\A(U_1, \dots, U_k)) = \sigma(\bU_1) \wedge \dots \wedge
    \sigma(\bU_k) = \sigma_k(\A(U_1', \dots, U_k')) 
    .\]
    Proposition~\ref{prop:sigma-k-inj2} then implies that 
    \[
    \A(U_1, \dots, U_k) = \A(U_1', \dots, U_k')
    ,\] 
    which completes the proof.
\end{proof}

Proposition~\ref{prop:ab-cyc-mod-2} implies that established properties of the
abelian cycles \(\A(U_1, \dots, U_k)\) carry over directly to the cycles
\(\A(\bU_1, \dots, \bU_k) \in H_k(\I_g)\). We summarize these results in the
following two propositions.

\begin{proposition} \label{prop:gen-1-generation-mod-2}
    Let \(g \geq 3\) and \(1 \leq k \leq g-1\). The \(\Z/2\Z\)-vector space
    \(H_k^{\mathrm{ab,sep}}(\I_g)\) is generated by the abelian cycles
    \(\A(\bU_1, \dots, \bU_k)\), where \(\bU_1, \dots, \bU_k \subset
    H_1(\Sigma_g; \Z/2\Z)\) are \(2\)-dimensional symplectic subspaces
    satisfying \(\bU_i \perp \bU_j\) for all \(i \neq j\).
\end{proposition}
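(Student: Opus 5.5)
The plan is to combine Proposition~\ref{prop:gen-1-generation} with Proposition~\ref{prop:ab-cyc-mod-2}; essentially no new input is needed.

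First, by Proposition~\ref{prop:gen-1-generation}, for \(g \geq 3\) and \(1 \leq k \leq g-1\) the space \(H_k^{\mathrm{ab,sep}}(\I_g)\) is spanned by the cycles \(\A(U_1, \dots, U_k)\). Here \(U_1, \dots, U_k \subset H_1(\Sigma_g)\) are pairwise orthogonal rank-\(2\) symplectic submodules.

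Second, take such a generator and reduce each \(U_i\) modulo \(2\). The reductions \(\bU_i\) are \(2\)-dimensional symplectic subspaces of \(H_1(\Sigma_g; \Z/2\Z)\), because unimodularity survives reduction. They are also pairwise orthogonal, because the mod-\(2\) intersection form is the reduction of the integral one. Hence \((U_1, \dots, U_k)\) is itself a lift of \((\bU_1, \dots, \bU_k)\) in the sense of Lemma~\ref{lem:lift-mod-2}. By the definition in Proposition~\ref{prop:ab-cyc-mod-2},
\[
\A(U_1, \dots, U_k) = \A(\bU_1, \dots, \bU_k).
\]

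So every generator from Proposition~\ref{prop:gen-1-generation} is one of the cycles \(\A(\bU_1, \dots, \bU_k)\). Conversely, each \(\A(\bU_1, \dots, \bU_k)\) equals \(\A\) of some integral lift, and therefore lies in \(H_k^{\mathrm{ab,sep}}(\I_g)\). The two spanning sets therefore coincide, which gives the claim.

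The main obstacle is a range issue. Proposition~\ref{prop:ab-cyc-mod-2} relies on Proposition~\ref{prop:sigma-k-inj2}, which uses Corollary~\ref{cor:wedge-sigma} and Proposition~\ref{prop:inj-k-2}, and these are only established for \(g \geq 4\) and \(2 \leq k \leq g-2\), with \(k=1\) being Johnson's theorem. The statement as written claims \(g \geq 3\) and \(1 \leq k \leq g-1\). Outside the established range I will not use well-definedness at all; instead I will read \(\A(\bU_1, \dots, \bU_k)\) as the set of \(\A\)-values over all lifts. The generation statement, ``the span of all \(\A(U_1,\dots,U_k)\) with the \(U_i\) lifting some tuple \((\bU_1,\dots,\bU_k)\)'', is then literally Proposition~\ref{prop:gen-1-generation}, and the mod-\(2\) notation becomes meaningful in the range where Proposition~\ref{prop:ab-cyc-mod-2} applies.
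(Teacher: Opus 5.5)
Your argument is correct and is essentially what the paper does implicitly: it obtains this statement by combining Proposition~\ref{prop:gen-1-generation} with the well-definedness of Proposition~\ref{prop:ab-cyc-mod-2}, using the observation that every integral generator \(\A(U_1,\dots,U_k)\) is itself a lift of \((\bU_1,\dots,\bU_k)\). Your range caveat is also well taken. Well-definedness is established only for \(g \geq 4\), \(2 \leq k \leq g-2\), and for \(k=1\) via Johnson, so the uncovered case is \(k = g-1\). There, well-definedness would force \(H_{g-1}^{\mathrm{ab,sep}}(\I_g)\) to be finite-dimensional, which the paper poses as an open question, so reading the statement in terms of lifts in that case is the right interpretation.
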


\begin{proposition} \label{prop:ab-cyc-relations-mod-2}
    Let \(g \geq 4\) and \(2 \leq k \leq g-2\). We have the following
    relations between abelian cycles \(\A(\bU_1, \dots, \bU_k) \in
    H_k(\I_g)\), where \(\bU_1, \dots, \bU_k \subset H_1(\Sigma_g; \Z/2\Z)\) are \(2\)-dimensional
    symplectic subspaces satisfying \(\bU_i \perp \bU_j\) for all \(i \neq j\):
    \begin{enumerate}
        \item \(\A(\bU_{\pi(1)}, \dots, \bU_{\pi(k)}) = \A(\bU_1, \dots, \bU_k)\) for
        any permutation \(\pi\) of \(\{1, \dots, k\}\);
        \item \(\A(\bU, \bU, \bU_1, \dots, \bU_{k-2}) = 0\);
        \item \(2 \A(\bU_1, \dots, \bU_k) = 0\);
        \item \(\A(\bU_1, \bV_1, \dots, \bV_{k-1}) + \dots + \A(\bU_n, \bV_1, \dots,
        \bV_{k-1}) = 0\) if \(\sigma(\bU_1) +
        \dots + \sigma(\bU_n) = 0\), where \(\bU_1, \dots, \bU_n \subset (\bV_1 \oplus
        \dots \oplus \bV_{k-1})^{\perp}\) are \(2\)-dimensional symplectic
        subspaces.
    \end{enumerate}
\end{proposition}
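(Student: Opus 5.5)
Each of the four relations will be obtained by lifting the mod-\(2\) subspaces to integral symplectic submodules, applying a property already known for integral abelian cycles, and then invoking Proposition~\ref{prop:ab-cyc-mod-2}. That proposition guarantees the result does not depend on the lift chosen. In every case I first use Lemma~\ref{lem:lift-mod-2} to pick a lift \(U_1, \dots, U_k\) (respectively \(U_1,\dots,U_n,V_1,\dots,V_{k-1}\)) with the required orthogonality. I then read off the relation from the corresponding statement about \(\A(U_1,\dots,U_k)=\A(T_{\delta_1},\dots,T_{\delta_k})\).

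\emph{Relations (1)--(3).} For (1), a permutation of a lift is a lift of the permuted tuple. Property~(II) of abelian cycles gives \(\A(U_{\pi(1)},\dots,U_{\pi(k)})=\sgn(\pi)\A(U_1,\dots,U_k)\), and the sign is irrelevant once (3) is known. Relation (3) is fact~(i) from~\cite{vladimirov}: \(H_k^{\mathrm{ab,sep}}(\I_g)\) is a \(\Z/2\Z\)-vector space for \(k\ge 2\).

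For (2), I read the statement as requiring \(\bU\) to appear in two positions. The hypothesis \(\bU_i\perp\bU_j\) for \(i\neq j\) is impossible when two entries coincide, since a symplectic subspace is not orthogonal to itself. So (2) needs an interpretation: either the tuple is excluded, or we define the cycle to be \(0\) there. The natural route is the second. Lifting \(\bU\) to \(U\), the cycle \(\A(T_\delta,T_\delta,\dots)\) vanishes by antisymmetry (property~(II) together with property~(I), since \(\A(h,h,\dots)=-\A(h,h,\dots)\)). I will take the convention that such a degenerate cycle equals zero, and note that it is consistent with \(\sigma(\bU)\wedge\sigma(\bU)\wedge\cdots=0\).

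\emph{Relation (4).} This is the only substantive one. Since the \(\bV_j\) are pairwise orthogonal, lift them by Lemma~\ref{lem:lift-mod-2} to \(V_1,\dots,V_{k-1}\). Each \(\bU_i\) lies in \((\bV_1\oplus\dots\oplus\bV_{k-1})^\perp\), which is the reduction of \(W=(V_1\oplus\dots\oplus V_{k-1})^\perp\). I therefore lift each \(\bU_i\) to a rank-\(2\) symplectic submodule \(U_i\subset W\), using the standard surjectivity of \(\Sp(W)\to\Sp(\bW)\) applied to a standard plane. The lifts \(U_1,\dots,U_n\) need not be mutually orthogonal; Proposition~\ref{prop:key-rel-k-lin-alg} does not require this. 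With these lifts the hypothesis gives \(\sigma(U_1)+\dots+\sigma(U_n)=0\in\langle\sigma(V_1),\dots,\sigma(V_{k-1})\rangle\). Proposition~\ref{prop:key-rel-k-lin-alg} then yields \(\sum_i\A(U_i,V_1,\dots,V_{k-1})=0\). Each \((U_i,V_1,\dots,V_{k-1})\) is a lift of \((\bU_i,\bV_1,\dots,\bV_{k-1})\), so Proposition~\ref{prop:ab-cyc-mod-2} identifies every term with \(\A(\bU_i,\bV_1,\dots,\bV_{k-1})\).

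The main obstacle is the lifting step in (4): each \(\bU_i\) must be lifted inside the fixed integral complement \(W\), and not merely somewhere in \(H_1(\Sigma_g)\). I will handle it by applying Lemma~\ref{lem:lift-mod-2} within \(W\), which is itself a unimodular symplectic lattice, separately for each \(i\).
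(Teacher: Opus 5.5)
Your proposal is correct and follows the paper's route. The paper gives no separate argument: it only remarks that Proposition~\ref{prop:ab-cyc-mod-2} lets the known properties of the integral cycles \(\A(U_1,\dots,U_k)\) carry over to any lift. Those properties are (I)--(III), fact (i), and Proposition~\ref{prop:key-rel-k-lin-alg}. In (4), lifting the \(\bV_j\) first and then each \(\bU_i\) inside the fixed unimodular complement \(W\) is exactly the detail needed to make that transfer work. Your reading of (2), as the lift \(\A(U,U,U_1,\dots,U_{k-2})\), is also the right one, since the orthogonality hypothesis cannot hold there.

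One justification in (2) needs repair. The identity \(\A(h,h,\dots)=-\A(h,h,\dots)\) only gives \(2\A(h,h,\dots)=0\). That carries no information here, because these classes already have order \(2\).

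The vanishing holds for a different reason. The homomorphism \(\Z^k\to\I_g\) defining \(\A(h,h,z_1,\dots,z_{k-2})\) factors through \(\Z^{k-1}\), because both of the first two generators go to \(h\). Since \(H_k(\Z^{k-1})=0\), the cycle is zero. Equivalently, the identification \(H_k(\Z^k)\cong\wedge^k\Z^k\) is alternating, not merely antisymmetric.
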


\subsection{Modified domain and the map \(\widetilde{\sigma}_k\)}

Let \(\Symp\) denote the set of \(2\)-dimensional symplectic subspaces of
\(H_1(\Sigma_g; \Z/2\Z)\). Consider the \(\Z/2\Z\)-vector space \(\AA\) freely 
spanned by the formal basis \(\{ [\bU] \mid \bU \in \Symp \}\).

The Birman--Craggs--Johnson homomorphism induces a surjective homomorphism 

\[
\begin{tikzcd}[column sep = 0.7cm]
    \widetilde{\sigma} \colon \AA \arrow[r, two heads] &\B_2'
.\end{tikzcd}
\]
We set
\[
\CC = \langle [\bU_1] + \dots + [\bU_n] \mid \sigma(\bU_1) + \dots +
\sigma(\bU_n) = 0, \, \bU_1, \dots, \bU_n \in \Symp \rangle \subset \AA
.\]
Thus, we have
\[
\B_2' \cong \AA / \CC
.\] 

We have the following standard fact. 

\begin{fact*}
Let \(F\) be a module over a ring \(R\) and let \(L \subset F\) be a
submodule. For \(V = F/L\), we have
\[
\wedge^k \, V \cong F^{\otimes k} / \left( J_L(F) + S(F) \right)
,\]
where
\begin{align*}
    J_L(F) &= \sum_{i=1}^{k} F^{\otimes i-1} \otimes L \otimes F^{\otimes
    k-i}, \\
    S(F) &= \langle f_1 \otimes \dots \otimes f_k \mid f_m \in F, f_i = f_j \text{ for some
    } i, j \rangle 
.\end{align*}

\end{fact*}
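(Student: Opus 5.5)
The plan is to factor the quotient map through \(V^{\otimes k}\) and identify the two kernels separately. Let \(\pi \colon F \to V = F/L\) be the projection, and let \(\pi^{\otimes k} \colon F^{\otimes k} \to V^{\otimes k}\) be the induced surjection. I will take as definition \(\wedge^k V = V^{\otimes k}/A_k(V)\), where \(A_k(V)\) is the submodule spanned by the pure tensors \(v_1 \otimes \dots \otimes v_k\) with \(v_i = v_j\) for some \(i \neq j\).

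If one prefers the definition via the two-sided ideal generated by the elements \(v \otimes v\), whose degree-\(k\) part is spanned by pure tensors with two \emph{adjacent} equal entries, I first reconcile it with \(A_k(V)\). Expanding \((x+y)\otimes(x+y)\) in two adjacent slots shows that swapping two adjacent entries changes the sign modulo that ideal. Moving one of two equal but non-adjacent entries next to the other by adjacent swaps then shows the tensor already lies in the ideal. So both definitions give the same submodule \(A_k(V)\).

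The first step is \(\ker \pi^{\otimes k} = J_L(F)\). This follows from right exactness of the tensor product. Write \(\pi^{\otimes k}\) as the composite of the maps that replace one factor \(F\) by \(V\) at a time. The kernel of the \(i\)-th map is the image of \(V^{\otimes i-1}\otimes L \otimes F^{\otimes k-i}\), using the exact sequence \(L \to F \to V \to 0\) tensored with the other factors. Pulling these kernels back to \(F^{\otimes k}\) and inducting on \(i\) gives exactly the sum \(\sum_i F^{\otimes i-1}\otimes L\otimes F^{\otimes k-i}\), since every pure tensor in \(V^{\otimes i-1}\) lifts to \(F^{\otimes i-1}\).

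The second step is \((\pi^{\otimes k})^{-1}(A_k(V)) = J_L(F) + S(F)\), and this is the only point needing care. For \(\supseteq\), note that \(\pi^{\otimes k}\) sends a generator of \(S(F)\) with \(f_i = f_j\) to a pure tensor with \(\pi(f_i)=\pi(f_j)\), which lies in \(A_k(V)\). For \(\subseteq\), take a generator \(v_1\otimes\dots\otimes v_k\) of \(A_k(V)\) with \(v_i=v_j\). Choose lifts \(f_m \in F\), using the \emph{same} lift for positions \(i\) and \(j\). The resulting pure tensor lies in \(S(F)\) and maps to the given generator. Hence \(A_k(V) \subseteq \pi^{\otimes k}(S(F))\), and any preimage of an element of \(A_k(V)\) differs from an element of \(S(F)\) by something in \(\ker\pi^{\otimes k} = J_L(F)\).

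Combining the two steps, the surjection \(F^{\otimes k} \to V^{\otimes k} \to \wedge^k V\) has kernel \(J_L(F)+S(F)\). This yields the claimed isomorphism.
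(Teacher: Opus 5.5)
Your proof is correct and complete. You identify \(\ker \pi^{\otimes k} = J_L(F)\) by right exactness, reading \(J_L(F)\) as its image in \(F^{\otimes k}\) as you do. You then lift each generator of \(A_k(V)\) to a generator of \(S(F)\) by choosing a common lift for the repeated entry. This is exactly the standard verification.

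The paper gives no proof of this Fact and quotes it as standard, so there is no argument to compare against. Note only that the paper applies it with \(R = \Z/2\Z\). There it matters that you define \(\wedge^k V\) by repeated entries (equivalently, by the ideal generated by the \(v \otimes v\)) rather than by antisymmetry. The statement also implicitly assumes \(R\) commutative and \(i \neq j\) in \(S(F)\).
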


It follows that
\[
\wedge^k \B_2' \cong \AA^{\otimes k}  / \left( J_{\CC}(\AA) +
\SS(\AA) \right)
,\] 
where 
\begin{align*}
    J_{\CC}(\AA) &= \sum_{i=1}^{k} \AA^{\otimes i-1} \otimes \CC \otimes \AA^{\otimes k-i}, \\
    \SS(\AA) &= \langle a_1 \otimes \dots \otimes a_k \mid a_m \in \AA, a_i =
    a_j \text{ for some } i, j \rangle 
.\end{align*}

In what follows, we prove the injectivity of the homomorphism
\[
\widetilde{\sigma}_k \colon H_k^{\mathrm{ab,sep}}(\I_g) \longrightarrow
\AA^{\otimes k} / \left( J_{\CC}(\AA) + \SS(\AA) \right)
,\] 
given by
\[
\A(\bU_1, \dots, \bU_k) \longmapsto [\bU_1] \otimes \dots \otimes [\bU_k]
.\]

\subsection{Proof of injectivity of \(\widetilde{\sigma}_k\)}

Let \(\FF_{\A}\) be the free \(\Z/2\Z\)-vector space spanned by the abelian
cycles \(\A(\bU_1, \dots, \bU_k)\), where \(\bU_1, \dots, \bU_k \in \Symp\)
satisfy \(\bU_i \perp \bU_j\) for all \(i \neq j\). Let \(\RR_{\A}\) denote
the kernel of the natural projection \(\FF_{\A} \to
H_k^{\mathrm{ab,sep}}(\I_g)\), i.e., \(H_k^{\mathrm{ab,sep}}(\I_g) \cong
\FF_{\A} / \RR_{\A}\).

To establish the
injectivity of \(\widetilde{\sigma}_k\), it therefore suffices to show that
under the homomorphism \(\FF_{\A} \to \AA^{\otimes k}\) given by
\[
\A(\bU_1, \dots, \bU_k) \longmapsto [\bU_1] \otimes \dots \otimes [\bU_k]
,\]
the image of \(\RR_{\A}\) in \(\AA^{\otimes k}\) contains the subspace
\(J_{\CC}(\AA) + \SS(\AA)\). The relations stated in
Proposition~\ref{prop:ab-cyc-relations-mod-2} yield exactly this inclusion,
from which injectivity follows.

\begin{proof}[Proof of Theorem~\ref{mainthm:BCJ-inj}]
    By construction, \(\sigma_k\) is the composition of
    \(\widetilde{\sigma}_k\) with the isomorphism
    \[
    \AA^{\otimes k}  / \left( J_{\CC}(\AA) + \SS(\AA) \right) \cong \wedge^k \B_2'
    .\]
    Therefore, the injectivity of \(\widetilde{\sigma}_k\) immediately implies
    that \(\sigma_k\) is injective.
\end{proof}

    \printbibliography
\end{document}